\newtheorem{lemma}{Lemma}[section]
\newtheorem{theorem}[lemma]{Theorem}
\newtheorem{proposition}[lemma]{Proposition}
\newtheorem{definition}[lemma]{Definition}
\newtheorem{remark}[lemma]{Remark}
\let\lutzremark=\remark
\def\remark{\lutzremark\normalfont}
\def\be{\begin{equation}}
\def\ee{\end{equation}}
\def\bea{\begin{eqnarray}}
\def\eea{\end{eqnarray}}
\def\bes{\begin{eqnarray*}}
\def\ees{\end{eqnarray*}}
\def\nn{\nonumber}
\def\<{\langle}
\def\>{\rangle}
\def\lb{\label}
\def\bs{\setminus}
\def\pt{\partial}
\def\d{{\mathrm{d}}}
\def\R{{\bf R}}
\def\C{{\bf C}}
\def\Z{{\bf Z}}
\def\U{{\bf U}}
\def\aa{{\alpha}}
\def\bb{{\beta}}
\def\ga{{\gamma}}
\def\th{{\theta}}
\def\om{{\omega}}
\def\Om{{\Omega}}
\def\lm{{\lambda}}
\def\sg{{\sigma}}
\def\vf{{\varphi}}
\def\cA{{\cal A}}
\def\P{{\cal P}}
\def\diag{{\rm diag}}
\def\Sp{{\rm Sp}}
\def\dm{{\rm \diamond}}
\def\ol#1{\overline{#1}}
\def\td#1{\tilde{#1}}
\title{Linear stability of the elliptic relative equilibria for the restricted $N$-body problem: two special cases}
\author{Jiashengliang Xie$^{1}$\thanks{E-mail:22135010@zju.edu.cn.},\quad
	Bowen Liu$^{2}$\thanks{Partially supported by NSFC (Nos. {12101394,  12171426}), Science and Technology Innovation Action Program of  STCSM (No. 20JC1413200), Natural Science Foundation of Shanghai No. 22ZR1433100	and Innovation Program of Shanghai Municipal Education Commission. E-mail: liubowen2010@gmail.com} \quad and \quad
	Qinglong Zhou$^{1}$\thanks{Corresponding author. Partially supported by NSFC (No.12171426), the Natural Science Foundation of Zhejiang Province (No. Y19A010072) and the Fundamental Research Funds for the Central Universities (No. 226-2024-00136).
		E-mail: zhouqinglong@zju.edu.cn. }\\	
	$^{1}$ Department of Mathematics,\\Zhejiang University, Hangzhou 310058, Zhejiang, China\\
	$^{2}$ School of Mathematical Science,\\ Shanghai Jiao Tong University, Shanghai 200240, China\\
}
\date{}
\begin{document}

\maketitle

\begin{abstract}
In this paper, we consider the elliptic relative equilibria of the restricted $N$-body problems, where
the $N-1$ primaries form an Euler-Moulton collinear central configuration 
or a $(1+n)$-gon central configuration. 
We obtain the symplectic reduction for the general restricted $N$-body problem. 
For the first case,
by analyzing the relationship between these restricted $N$-body problems and the elliptic Lagrangian solutions, we distinguish the linear stability of the restricted $N$-body problem by the $\om$-Maslov index.Through numerical computations, we also determine the stability conditions in terms of the mass parameters for \(N=4\) and the symmetry of the central configuration.
For the second case,
there exist three positions
$S_1,S_2$ and $S_3$
for the massless body (up to rotations of angle $\frac{2\pi}{n}$).
For ${m_0\over m}$ sufficiently large,
we show that the elliptic relative equilibria
is linearly unstable if the eccentricity $0\le e<e_0$ and the massless body lies at $S_1$ or $S_2$;
while the elliptic relative equilibria
is linearly stable if the massless body lies at $S_3$.
\end{abstract}

{\bf Keywords:} restricted $N$-body problem, elliptic relative equilibria, Euler-Moulton central configuration, linear stability.

{\bf AMS Subject Classification}: 70F10, 70H14, 34C25.

\renewcommand{\theequation}{\thesection.\arabic{equation}}

\setcounter{equation}{0}
\section{Introduction and main results}
\label{sec:1}

{In the classical planar $N$-body problems of celestial mechanics, the position vectors of the $N$-particles are denoted by $q_1 ,\dots , {q_N}\in \R^2$, and the masses are represented by
$m_1 ,\dots,{m_N} > 0$.} By Newton’s second law and the law of universal
gravitation, the system of equations is
\begin{align}
m_i\ddot{q_i} = \frac{\pt U}{\pt q_i}, \quad i = 1, {\dots, N},\lb{1.1}
\end{align}
where $U(q) = U(q_1, {\dots, q_{N}}) = \sum_{1\leq i< j\leq {N}} \frac{m_i m_j}{|q_i-q_j|}$ 
is the potential function and $|\cdot |$ {is} the standard norm of vector in $\R^2$. 
Suppose the configuration space is 
$$\hat{\chi} :=\left\{q =(q_1, {\dots, q_N})\in (\R^2)^{{N}}\;|\sum_{i = 1}^{{N}} m_iq_i = 0, q_i \neq q_j, \forall i\neq j \right\}. $$
For the period $T$, the corresponding action functional is 
\begin{align}
  \mathbf{A}(q) = \int_{0}^{T} \left[ \sum_{i = 1}^{{N}} \frac{m_i|\dot{q}_i(t)|^2}{2} +U(q(t))\right] \d t, \lb{1.222}
  \end{align}
which is defined on the loop space $W^{1,2} (\R/T \Z, \hat{\chi})$. The periodic solutions
of \eqref{1.1} correspond to critical points of the action functional \eqref{1.222}. 
Let $p_1 , {\dots, p_{N}} \in \R^2$ be the momentum vectors of the particles respectively. It is {well-known} that \eqref{1.1} can be reformulated as a Hamiltonian system by 
\begin{align}
\dot{p}_i = -\frac{\pt H}{\pt q_i}, \; \dot{q}_i  = \frac{\pt H}{\pt p_i}, \quad \mbox{for} \;i = 1, {\dots, N}, \lb{1.2}
\end{align}
with the Hamiltonian function
\begin{align} 
H(p, q) = \sum_{i =1}^{{N}} \frac{|p_i|^2}{2m_i} - U(q_1,{\dots, q_{N}}).\lb{1.3}
\end{align}

One special class of periodic solutions to the planar $N$-body problem is the elliptic relative equilibrium (ERE for short) \cite{MS}. It is generated by a central configuration and the Keplerian motion.
A central configuration (C.C. for short) is formed by $N$ position vectors $\left(q_{1}, \ldots, q_{N}\right)=\left(a_{1}, \ldots, a_{N}\right)$ which satisfy
\begin{align}
	-\lambda m_{i} q_{i}=\frac{\partial U}{\partial q_{i}}, \forall\; 1\leq i \leq N, \lb{eqn:cc}
\end{align}
where $\lambda =U(a) /  {I(a)}>0$ and $I(a)={\sum_{i=1}^N} m_{i}|a_{i}|^{2}$ is the moment of inertia.
A planar central configuration of the $N$-body problem gives rise to a solution of \eqref{1.1} where each particle moves on a specific Keplerian orbit while the totality of the particles move according to a homothetic motion.


The linear stability of the ERE is determined by the eigenvalues of the linearized Poincar\'e map. 
Let $\U$ denote the unit circle in the complex plane. The ERE is spectrally stable if all eigenvalues of linearized Poincar\'e map  are on $\U$; it is linearly stable if Poincar\'e map is semi-simple and spectrally stable; it is linearly unstable if at least one pair of eigenvalues are not on $\U$.
Since the nineteenth century \cite{R2}, the researches on the stability have always been active in celestial mechanics because it reveals the dynamics near the period orbits. 
However, it has always been one difficult task to obtain the linear stability of ERE, because the linearized Hamiltonian systems are non-autonomous, especially for the elliptic orbits.
Many results on linear stability of the three-body problems have been obtained over the past decades by numerical methods \cite{MSS,MSS1,MSS2}, bifurcation theory \cite{R1} and the index theory \cite{HS,HLS,HuOuWang2015ARMA,Zhou2017}.  
To the best of our knowledge, the $\om$-Maslov index theory is the only analytical method to obtain the full picture of the stability and instability to the ERE, such as the elliptic Lagrangian solution \cite{HS,HLS,HuOuWang2015ARMA}, and the elliptic Euler solution \cite{Zhou2017,ZhonLong2017CMDA}.

When it comes to $N$-body problems with $N \geq 4$, research on the stability of ERE is quite challenging, and hence results have been scarce in recent decades.
To the best of our knowledge,
for the general case of $n$ bodies, the elliptic Euler$-$Moulton solutions
are the only ones that has been well studied (in \cite{ZhonLong2017CMDA}).
In some special cases of $n$-body problem,
for the ERE which derived from an $n$-gon or $(1+n)$-gon central configurations with $n$ equal masses,
the linear stability problem has been studied by Hu, Long and Ou in \cite{Hu2020}.
For $N=4$, results 
regarding the linear stability of other ERE to the four-body problem can be found in \cite{Mansur2017, Leandro2018, Liu2021,LiZ, Leandro2003,zhou2023}.

In this paper, we focus on the restricted $N$-body problem with the primaries forming an Euler-Moulton collinear central configuration 
(see \cite{ZhonLong2017CMDA})
or a $(1+n)$-gon central configuration (see \cite{Maxwell,Hu2020}).
It is reasonable to study the stability problem of the
massless particle
since the
“massless body” can be imaged as a space station while the massive bodies form a planetary system.

We first apply the symplectic reduction method \cite{MS} to the general restricted $N$-body problem. 
Let $P$ denote the inertial position of the massless body, which moves in the gravitational field of the $N-1$ primaries without disturbing their motion. The corresponding 
 Hamiltonian function for this $N$-th body is given by
\begin{equation}
H(P,q, t)=\frac{1}{2}|P|^2-\sum_{i=1}^{N-1}
\frac{m_i}{|r(t)R(\th(t))a_i-q|}.\lb{HaFun}
\end{equation}
Here $r=r(\theta(t))$ is the Kepler elliptic orbit given through the true anomaly $\th=\th(t)$by
\begin{align}
  r(\th(t)) = \frac{p}{1+e\cos\th(t)},  \lb{rTh}
\end{align}
where $p=a(1-e^2)$ and $a>0$ is the latus rectum of the ellipse \eqref{rTh}.
By Proposition \ref{P2.1}, the ERE $(P(t),q(t))^T$  of the system \eqref{1.2} is in time $t$ where $q(t)=r(t)R(\theta(t))a_{N}$ and $P(t)=\dot{q}(t)$. It 
can be transformed to the new solution $(\bar{Z}(\theta),\bar{z}(\theta))^T = (0,\sigma,\sigma,0)^T$ in the true anomaly $\theta$ as the new
variable for the original Hamiltonian function $H$ given by \eqref{eqn:red.Ham} below. We then have the reduced Hamiltonian system is given in the theorem.

\begin{theorem}\label{linearized.Hamiltonian}
The linearized Hamiltonian system 
at the ERE
  $\zeta_0 \equiv (\bar{Z}(\theta),\bar{z}(\theta))^T =
  (0,\sigma,\sigma,0)^T\in\R^4  $
  depending on the true anomaly $\theta$ is given by
  \begin{align}
    \dot\xi(\theta) = JB(\theta)\xi(\theta),  \lb{eqn:LinearHam1}
  \end{align}
  with
  \bea B(\theta)
  = H''(\theta,\bar{Z},\bar{z})|_{\bar\xi=\xi_0}
  = \left(\begin{array}{cccc|cccc}
  I_2   &-J_2 \\
  J_2     &I_2-\frac{1}{1+e\cos\th}D
  \end{array}\right),  \lb{LinearHam2}\eea
  where
  \begin{align}
    \label{matrix.D}
  D=I_2-\frac{1}{\mu}\left(\sum_{i=1}^{N-1}
  \frac{m_i}{|a_i-a_{N}|^3}\right)I
  +\frac{3}{\mu}\sum_{i=1}^{N-1} m_i\frac{(a_i-a_{N})(a_i-a_{N})^T}{|a_i-a_{N}|^5}.
  \end{align}
  The corresponding quadratic Hamiltonian function is given by
  \bea
  H_2(\theta,\bar{Z},\bar{z})  
   =\frac{1}{2}|\bar{Z}|^2+\bar{Z}\cdot J\bar{z}
    +\frac{1}{2}\left(I_2-\frac{D}{1+e\cos\th}\right)|\bar{z}|^2. \eea
  \end{theorem}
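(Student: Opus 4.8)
The plan is to begin from the reduced Hamiltonian $H$ of \eqref{eqn:red.Ham}, obtained from \eqref{HaFun} by the symplectic reduction of Proposition \ref{P2.1}: the inertial variables $(P,q)$ are replaced by the rotating--pulsating variables $(\bar Z,\bar z)$, and the physical time $t$ by the true anomaly $\theta$. Under this reduction the ERE maps to the constant point $\zeta_0=(0,\sigma,\sigma,0)^T$, so $\zeta_0$ is an equilibrium of the reduced flow for every $\theta$. I would first record this, checking that $\nabla H(\theta,\zeta_0)\equiv 0$; the vanishing of these first-order terms is precisely the relative-equilibrium balance satisfied by $a_N$, inherited from the central-configuration equation \eqref{eqn:cc}. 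Once the linear terms are gone, the local dynamics is governed by the Hessian $B(\theta)=H''(\theta,\cdot)|_{\zeta_0}$, and \eqref{eqn:LinearHam1} is just Hamilton's equation $\dot\xi=JB(\theta)\xi$ associated with the quadratic part $H_2=\tfrac12\xi^TB(\theta)\xi$.

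I would then compute $B(\theta)$ block by block. The kinetic term $\tfrac12|\bar Z|^2$ contributes the constant block $I_2$ in the $\bar Z\bar Z$ corner, and the gyroscopic (Coriolis) term $\bar Z\cdot J\bar z$, produced by undoing the rotation $R(\theta)$ in the reduction, contributes the constant off-diagonal blocks $-J_2$ and $J_2$; these carry no explicit $\theta$-dependence because neither the rotation nor the kinetic energy does after reduction. The only $\theta$-dependent block is the $\bar z\bar z$ corner, which originates from the Newtonian potential.

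The heart of the matter is thus the Hessian of the primaries' potential $f(q)=\sum_{i=1}^{N-1}m_i/|a_i-q|$. Differentiating twice gives
\[
\nabla^2 f(q)=\sum_{i=1}^{N-1}m_i\left(-\frac{I}{|a_i-q|^3}+3\frac{(a_i-q)(a_i-q)^T}{|a_i-q|^5}\right),
\]
which I would evaluate at the equilibrium position $q=a_N$. After the pulsating rescaling $r(\theta)=p/(1+e\cos\theta)$ of \eqref{rTh}, this potential enters the reduced Hamiltonian with the prefactor $1/(1+e\cos\theta)$, and division by the normalization constant $\mu$ fixes its homogeneity; combining $I_2$ with $\tfrac1\mu\nabla^2 f(a_N)$ assembles exactly the matrix $D$ of \eqref{matrix.D}, so the $\bar z\bar z$ block is $I_2-\tfrac{1}{1+e\cos\theta}D$. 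Collecting the four blocks yields \eqref{LinearHam2}, and $H_2$ is the quadratic form whose Hessian is this $B(\theta)$.

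The step I expect to be the main obstacle is the bookkeeping inside the reduction itself: one must track how the time-dependent scaling $r(\theta)$, the rotation $R(\theta)$, and the change of independent variable from $t$ to $\theta$ (which rescales the symplectic form, and hence the momentum $\bar Z$) conspire to leave three of the four blocks constant while producing precisely the single factor $1/(1+e\cos\theta)$ on $D$. Arranging the normalization $\mu$ and the rank-one sum so that they collapse into the clean expression \eqref{matrix.D}, rather than a $\theta$-dependent tangle, is the delicate point, and it works only because the position $a_N$ satisfies the relative-equilibrium condition that forces the first-order terms to cancel for every $\theta$.
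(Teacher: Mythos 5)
Your proposal follows essentially the same route as the paper: both start from the reduced Hamiltonian \eqref{eqn:red.Ham}, read off the constant blocks $I_2$ and $\pm J$ from the kinetic and Coriolis terms, and obtain the $\bar z\bar z$ block by computing the Hessian $\sum_i m_i\bigl(-I/|a_i-q|^3+3(a_i-q)(a_i-q)^T/|a_i-q|^5\bigr)$ of the Newtonian potential at $q=a_N$, then absorbing the $\sigma$-scaling via $\sigma^4=\mu p$ and the factor $r/p=1/(1+e\cos\theta)$ to assemble $D$. The only difference is that you additionally propose verifying $\nabla H(\theta,\zeta_0)=0$, which the paper leaves implicit in Proposition \ref{P2.1}; this is a harmless extra check, not a different argument.
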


Since $D$ depends on the central configuration,
it is challenging to compute $D$ for a general central configuration.
Therefore, we turn our attention to two special cases.
  
The first case is when the primaries form an Euler-Moulton collinear configuration and the $N$ bodies span $\R^2$.
Denote the eigenvalues of $D$ by $\lm_3$ and $\lm_4$. By Proposition  \ref{prop.sum.lm3.lm4}, both $\lm_3$ and $\lm_4$ are positive and  $\lm_3 + \lm_4 = 3$.
By this property, we establish the relationships between the Maslov index of $\xi_{\aa, e}$ given by \eqref{eqn:LinearHam1} and the Maslov index of $\ga_{\beta, e}$ which represents the elliptic Lagrangian solutions in \cite{HLS}. 
For more details on Maslov-type index theory, we refer to the Appendix in \ref{subsec:5.2} below, while all the complete details are contained in \cite{Lon4}.

Since the elliptic Lagrangian solutions have already been well-studied in \cite{HLS},
by the properties of the Maslov index, we define three curves $\bb_s(e)$, $\bb_m(e)$ and $\bb_k(e)$ from left to right in $(\bb, e)\in [0,9]\times [0, 1)$ according to the $\omega$-Maslov indices of $\xi_{\bb,e}$. 
Therefore, we can distinguish the linear stability
of the EREs as follows.

\begin{theorem}\label{thm:RE.norm.form}
Let $\lambda_3, \lambda_4$ be the two eigenvalues of
$D$ given by (\ref{matrix.D}), 
and $\beta=9-(\lambda_3-\lambda_4)^2$.
There exist two real analytic functions
$\beta_1(e),\beta_2(e),e\in[0,1)$
such that,
for every fixed $e\in[0,1)$,
the $-1$ index $i_{-1}(\xi_{\bb,e})$ is non-increasing
with respect to $\beta$;
and it is strictly decreasing on two values of $\bb=\bb_1(e)$ and $\bb=\bb_2(e)$.
We define
\begin{equation}
    \beta_s(e)=\min\{\beta_1(e),\beta_2(e)\},\quad
    \beta_m(e)=\max\{\beta_1(e),\beta_2(e)\},
\end{equation}
and
\begin{equation}\label{beta_h}
    \bb_k(e)=\sup\left\{\bb'\in[0,9]\;\bigg|\;\sigma(\xi_{\bb,e}(2\pi))\cap{\bf U}\ne\emptyset,\;
    \forall \bb\in[0,\bb']\right\},
\end{equation}
for $e\in[0,1)$.
The diagrams of
the functions $\bb_s$, $\bb_m$ and $\bb_k$ with respect to $e\in[0,1)$,
which denoted by $\Gamma_s,\Gamma_m$ and $\Gamma_k$,
separate the parameter rectangle $\Theta=[0,9]\times[0,1)$ into four regions,
which we denote by I, II, III and IV (see Figure \ref{bifurcation_curves}), respectively.
Then, if $(\bb,e)$ is in Region I and III, 
the ERE is strongly linearly stable;
if $(\bb,e)$ is in Region II and IV, 
the ERE is linearly unstable.

\begin{figure}[ht]
\centering
\includegraphics[height=7.5cm]{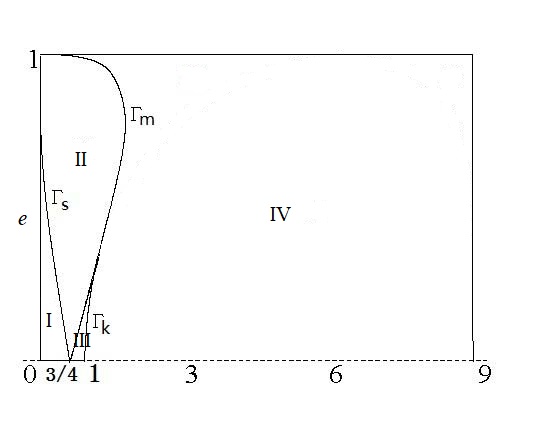}
\vspace{-5mm}
\caption{The linear stability separation curves with respect to the parameter region.}
\label{bifurcation_curves}
\end{figure}
\end{theorem}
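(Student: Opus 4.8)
The plan is to reduce the restricted problem's linearized system to the already-solved elliptic Lagrangian system, and then transport the full stability picture of \cite{HLS} through the $\om$-Maslov index. By Theorem~\ref{linearized.Hamiltonian} the linearized system at the ERE is $\dot\xi=JB(\th)\xi$, where $B(\th)$ depends on the central configuration only through the symmetric matrix $D$ of \eqref{matrix.D}. Proposition~\ref{prop.sum.lm3.lm4} forces $\mathrm{tr}(D)=\lm_3+\lm_4=3$ with $\lm_3,\lm_4>0$, so $D$ is pinned down by its trace and the single parameter $\beta=9-(\lm_3-\lm_4)^2=4\lm_3\lm_4\in(0,9]$; equivalently $\{\lm_3,\lm_4\}=\{\tfrac{3\pm\sqrt{9-\beta}}{2}\}$. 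Let $R\in\SO(2)$ diagonalize $D$, $R^TDR=\diag(\lm_3,\lm_4)$, and set $\mathcal{R}=\diag(R,R)$. Since $R\in\SO(2)$ commutes with $I_2$ and $J_2$ and $R^TR=I_2$, the matrix $\mathcal{R}$ is symplectic and $\th$-independent, and conjugation leaves the blocks $I_2,J_2$ of $B(\th)$ invariant while replacing $D$ by $\diag(\lm_3,\lm_4)$. Thus the substitution $\xi=\mathcal{R}\eta$ turns \eqref{eqn:LinearHam1} into $\dot\eta=J\tilde B(\th)\eta$ whose coefficient matrix is precisely that of the essential elliptic Lagrangian system $\ga_{\beta,e}$ of \cite{HLS} with the matched parameter $\beta$. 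Hence $\xi_{\beta,e}$ and $\ga_{\beta,e}$ are symplectically conjugate as paths in $\Sp(4)$, so they share the same $\om$-Maslov indices and the same linear-stability type for every $e\in[0,1)$.

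With this identification, the behavior of $i_{-1}(\xi_{\beta,e})$ coincides with that of $i_{-1}(\ga_{\beta,e})$, and I would import the monotonicity analysis of \cite{HLS}: for each fixed $e$, $i_{-1}(\ga_{\beta,e})$ is non-increasing in $\beta$ and drops by the expected amount at exactly two values $\beta_1(e),\beta_2(e)$. Their analyticity follows by applying the implicit function theorem to the $(-1)$-degeneracy condition $-1\in\sg(\ga_{\beta,e}(2\pi))$, the relevant eigenvalue of the monodromy crossing $-1$ transversally and analytically in $\beta$. Setting $\beta_s=\min\{\beta_1,\beta_2\}$ and $\beta_m=\max\{\beta_1,\beta_2\}$ then produces the curves $\Gamma_s,\Gamma_m$, while $\Gamma_k$ from \eqref{beta_h} is the outer boundary of the region in which the monodromy keeps eigenvalues on $\U$.

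To classify the four regions, I would study $M(\beta,e)=\xi_{\beta,e}(2\pi)\in\Sp(4)$ via the $\om$-index theory of \cite{Lon4} (see~\ref{subsec:5.2}) together with the Krein signature. Strong linear stability is equivalent to $M(\beta,e)$ lying in the interior of the set of strongly stable symplectic matrices, which is detected by the absence of $\om$-degeneracy for all $\om\in\U$ throughout an open region, together with definite Krein signatures of the elliptic eigenvalues. In Regions I and III the fundamental solution is $\om$-nondegenerate for all $\om$ and all four eigenvalues remain on $\U$, giving strong linear stability; when $(\beta,e)$ crosses $\Gamma_s,\Gamma_m$ or $\Gamma_k$ a Krein collision forces a pair of eigenvalues off $\U$, so in Regions II and IV the ERE is linearly unstable. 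This is exactly the four-region partition established for the elliptic Lagrangian problem in \cite{HLS}, which now transfers verbatim by the conjugacy of the first paragraph.

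The crux is the reduction in the first paragraph: verifying that, because $\mathrm{tr}(D)=3$ is forced by Proposition~\ref{prop.sum.lm3.lm4}, the restricted-problem system is not merely similar but \emph{symplectically} conjugate, via a constant $\mathcal{R}\in\Sp(4)$, to the Lagrangian essential system with $\beta=9-(\lm_3-\lm_4)^2$. Only this exact identification (rather than a mere spectral coincidence) guarantees that $\om$-indices and stability types agree. Once it is in place, the delicate index computations, the analyticity of $\beta_1,\beta_2$, and the Krein-signature bookkeeping separating the four regions are all inherited from \cite{HLS} rather than redone; the remaining check is the elementary verification that $R^TJ_2R=J_2$ and $R^TI_2R=I_2$ so that $\tilde B(\th)$ has the required Lagrangian form.
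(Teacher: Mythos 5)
Your proposal is correct and follows essentially the same route as the paper: after Proposition \ref{prop.sum.lm3.lm4} forces $\lm_3+\lm_4=3$, the linearized system is identified with the essential part $\ga_{\bb,e}$ of the elliptic Lagrangian system of \cite{HLS} with $\bb=9-(\lm_3-\lm_4)^2$, and the entire stability diagram is then imported from Theorem 1.2 of \cite{HLS}. The only difference is presentational: you spell out the constant symplectic conjugation $\mathcal{R}=\diag(R,R)$ diagonalizing $D$, which the paper leaves implicit in writing the operator $\cA(\bb,e)$ with $R(t)K_\bb R(t)^T$, and this explicit check ($R^TJ_2R=J_2$, $R^TDR=K_\bb$) is a worthwhile addition rather than a deviation.
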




However, the dependence between the mass parameter $\beta$ and the masses $m=(m_1,m_2,\ldots,m_{N-1})$
is quite intricate.
To shed light on how the linear stability varies with specific mass distributions, we will conduct some numerical computations.
For the circular case
with $N=4$, we first numerically analyze the relationships between $\bb$ and $m_1$, $m_2$, $m_3$, and consequently, demonstrate the linear stability in terms of $m_1$, $m_2$ and $m_3$. 
Since $m_1 + m_2 + m_3 = 1$, we can plot the stable regions in terms of $m_1$ and $m_3$.
We show these results in (a) of Figure \ref{fig:1}.
\begin{figure}[htbp]
	\centering
  \begin{tabular}{cc}
    \includegraphics[width=0.45\textwidth]{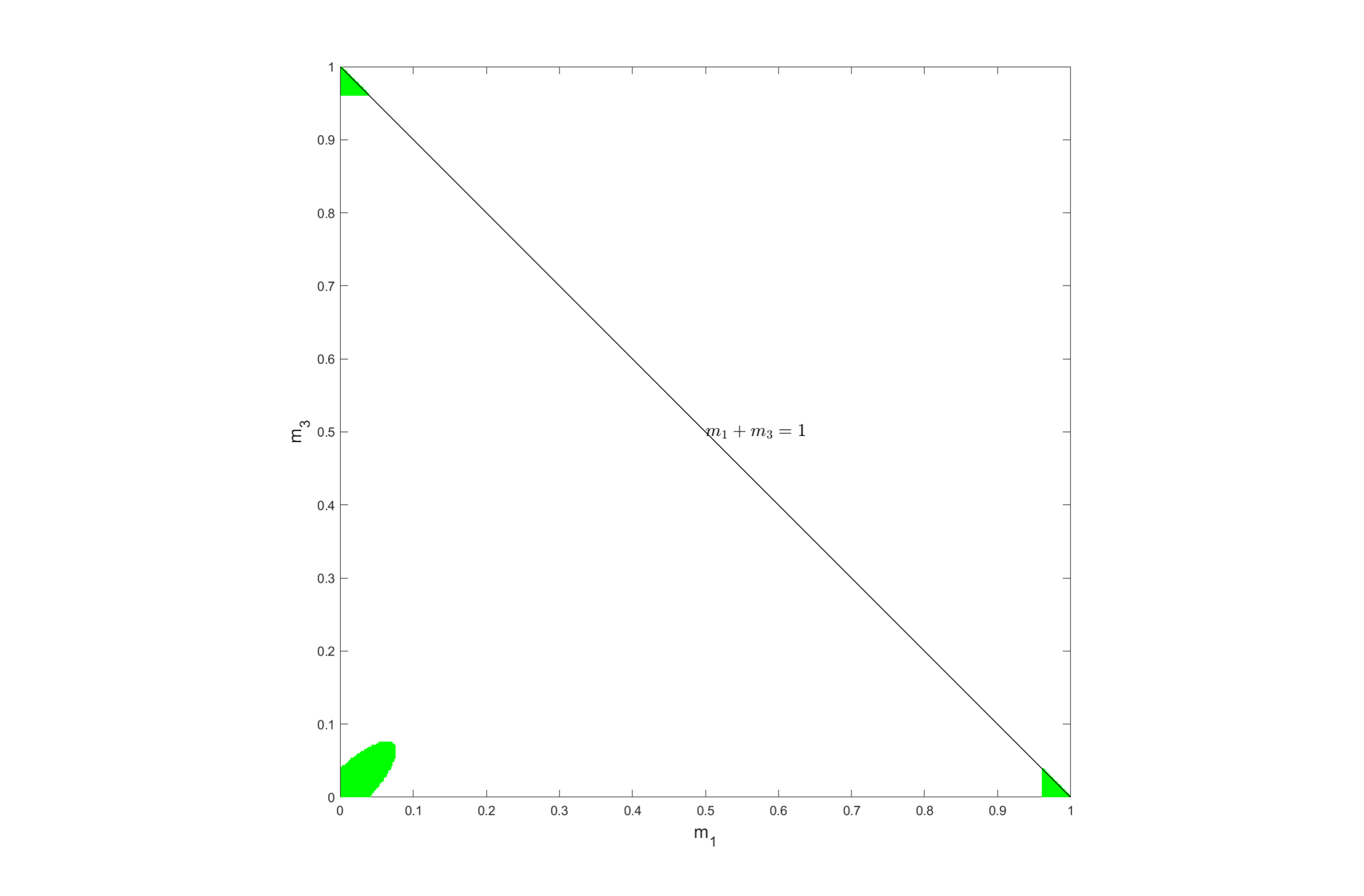} &
		\includegraphics[width=0.45\textwidth]{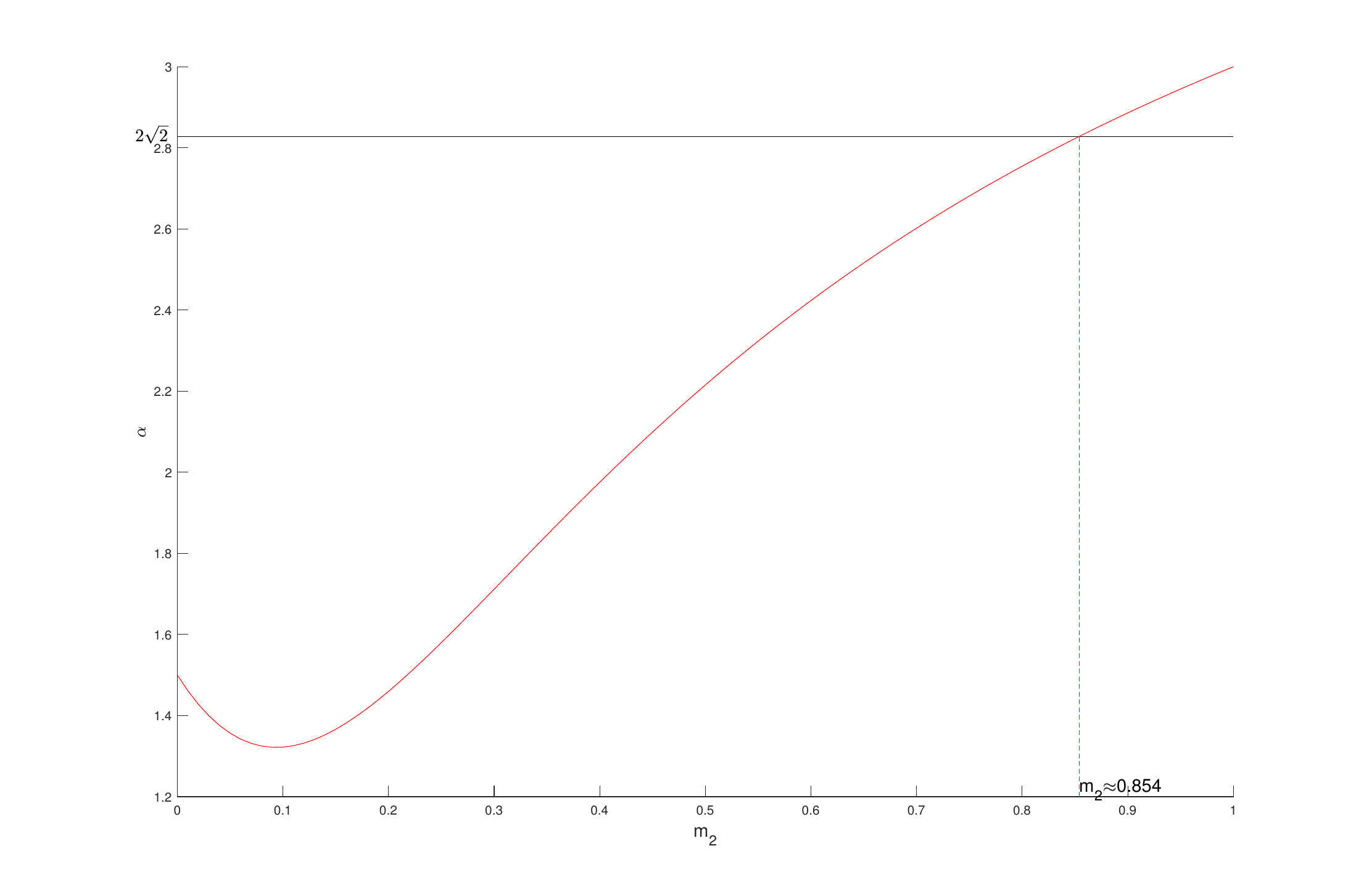}  \\
		\footnotesize{(a) The linearly stable regions of circular solution.}  & 
		\footnotesize{(b) The symmetric case.} 
	\end{tabular}
	\caption{\footnotesize{In Figure (a), the shade regions in the $(m_1,m_3)$-plane show the all the possible choices of $(m_1, m_2, m_3) = (m_1, 1- m_1 - m_3, m_3)$ of the circular orbit such that the system is linearly stable. The Figure (b) shows the changes of $\alpha$ with respect to $m_2$ in the symmetric case}}
	\label{fig:1}
\end{figure}

If we further assume that $m_1 = m_3$,
w
e have the following results:
\begin{theorem}\lb{thm:numerically}
  If $e = 0$ and $m_1 = m_3$, $\xi_{\bb,e}$ is linearly stable for $m_2 \in (m^*,1)$ for some $m^*\in(0,1)$.
\end{theorem}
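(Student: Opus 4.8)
The plan is to use that the circular case $e=0$ turns the linearized system \eqref{eqn:LinearHam1} into an autonomous one, so that linear stability reduces to an eigenvalue condition depending on the masses only through $\beta$, and then to follow $\beta$ along the symmetric family $m_1=m_3=(1-m_2)/2$. First I would set $e=0$, so that $1+e\cos\theta\equiv 1$ and the coefficient matrix $B(\theta)$ in \eqref{LinearHam2} is constant; then $\zeta_0$ is linearly stable iff all eigenvalues of $JB$ lie on the imaginary axis and $JB$ is semisimple. Writing the characteristic polynomial of $JB$ through the spectral data of $D$ and using Proposition \ref{prop.sum.lm3.lm4}, namely $\operatorname{tr}D=\lambda_3+\lambda_4=3$ and $\det D=\lambda_3\lambda_4=\beta/4$ (the latter since $\beta=9-(\lambda_3-\lambda_4)^2=4\lambda_3\lambda_4$), a direct computation yields, with $\rho$ an eigenvalue,
\[
\rho^4+(4-\operatorname{tr}D)\,\rho^2+\det D=\rho^4+\rho^2+\tfrac{\beta}{4}=0 .
\]
Setting $x=\rho^2$ gives $x=\tfrac12(-1\pm\sqrt{1-\beta})$. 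For $\beta\in(0,1)$ both roots $x$ are real, negative and distinct, so $JB$ has four distinct purely imaginary eigenvalues and is semisimple, whence the circular ERE is linearly stable; for $\beta\ge 1$ some $x$ is non-real or a double negative, producing an eigenvalue off the imaginary axis (or a nontrivial Jordan block), hence instability. This matches Region I of Theorem \ref{thm:RE.norm.form} (so $\beta_k(0)=1$): at $e=0$ the solution is linearly stable precisely when $\beta\in(0,1)$.

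Next I would set up the symmetric configuration, normalizing the collinear primaries to $a_1=(-1,0)$, $a_2=(0,0)$, $a_3=(1,0)$ with $m_1=m_3=(1-m_2)/2$; this is a central configuration for every $m_2$, and since $\beta=4\det D$ is scale invariant the normalization is harmless. The reflection $x\mapsto -x$ fixes the configuration, forces the massless body onto the $y$-axis, $a_N=(0,h)$, and kills the off-diagonal entry of $D$ (which is odd under this reflection), so $D=\operatorname{diag}(\lambda_3,\lambda_4)$. Evaluating the trace of \eqref{matrix.D} together with $\operatorname{tr}D=3$ forces $\mu=\sum_i m_i/|a_i-a_N|^3$, so the first two terms of \eqref{matrix.D} cancel and
\[
\lambda_3=\frac{3}{\mu}\,\frac{2m_1}{\ell^{5}},\qquad \lambda_4=3-\lambda_3,\qquad \ell=\sqrt{1+h^2},\quad \mu=\frac{2m_1}{\ell^{3}}+\frac{m_2}{h^{3}} .
\]
The height $h$ is pinned by the relative–equilibrium equation for the massless body on the bisector, $\mu=\lambda$ with $\lambda=m_2+m_1/4$ the common rotation rate of the primaries:
\[
\frac{2m_1}{(1+h^2)^{3/2}}+\frac{m_2}{h^3}=m_2+\frac{m_1}{4}.
\]
Its left-hand side is continuous and strictly decreasing from $+\infty$ to $0$ on $h\in(0,\infty)$, so $h=h(m_2)$ exists, is unique, and depends continuously (indeed smoothly, by the implicit function theorem) on $m_2$; hence $\beta(m_2)=4\lambda_3(3-\lambda_3)$ is continuous on $(0,1)$ and, since $\lambda_3\in(0,3)$, strictly positive there.

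Finally I would locate the interval where $\beta<1$ via the two endpoint configurations, which are explicit: as $m_2\to 1$ ($m_1\to 0$) the massless body degenerates to a Kepler circular orbit, $h\to 1$, $\lambda_3\to 0$, so $\beta(m_2)\to 0$; as $m_2\to 0$ ($m_1\to 1/2$) the three bodies form the equilateral Lagrange triangle, $h=\sqrt3$, $\lambda_3=3/4$, so $\beta\to 4\cdot\tfrac34\cdot\tfrac94=27/4>1$. By continuity $\beta(m_2)<1$ on a left neighbourhood of $1$, so putting $m^*:=\sup\{m_2\in(0,1):\beta(m_2)\ge 1\}$ gives $m^*\in(0,1)$ (the set is nonempty because $\beta(0^+)=27/4>1$, and $m^*<1$ because $\beta\to 0$ as $m_2\to 1$), with $\beta(m_2)\in(0,1)$ for every $m_2\in(m^*,1)$; by the criterion of the first paragraph the solution is linearly stable on that interval, which is the claim. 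The \emph{main obstacle} is the implicit dependence $h=h(m_2)$: the existence of $m^*$ needs only continuity together with the two endpoint values and so is soft, but showing that $m^*$ marks a single sharp transition (equivalently, that $\beta(m_2)$ is monotone decreasing, as Figure \ref{fig:1}(b) suggests) requires controlling $d\beta/dm_2$ through the implicitly defined $h$, which I expect to be the genuinely hard analytic step and is why the precise threshold is presented numerically.
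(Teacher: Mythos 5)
Your proposal is correct and follows essentially the same route as the paper: restrict to the symmetric family, use the $y$-component of the central-configuration equation for the massless body (your $\mu=\lambda$ condition is exactly the paper's equation \eqref{y} after rescaling, since $2m_1=1-m_2$ and $m_2+m_1/4=(1+7m_2)/8$), reduce everything to the single parameter $\beta=4\lambda_3\lambda_4$, evaluate the endpoints, and conclude by continuity, leaving the sharp threshold $m^*\approx 0.854$ to numerics. Two points of comparison are worth recording. First, where the paper cites Theorem 1.2 of \cite{HLS} (via Theorem \ref{thm:RE.norm.form}) for the criterion that the $e=0$ orbit is linearly stable iff $0\le\beta<1$, you rederive it directly from the autonomous characteristic polynomial $\rho^4+(4-\operatorname{tr}D)\rho^2+\det D=\rho^4+\rho^2+\beta/4$; your block computation is correct (it depends only on $\operatorname{tr}D$ and $\det D$), and this makes the circular case self-contained at no extra cost. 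Second, at the endpoint $m_2=0$ you obtain $\beta=4\cdot\frac34\cdot\frac94=\frac{27}{4}$, whereas the paper asserts $\beta=9$; your value is the correct one (the paper's own formulas give $z=\frac14$ and $\beta=36z(1-z)=\frac{27}{4}$, consistent with the Routh value $27(m_1m_2+m_1m_3+m_2m_3)$ for masses $\frac12,\frac12,0$), so the paper contains an arithmetic slip there — harmless, since both values exceed $1$ and only the inequality $\beta(0^+)>1$ is used. The one step you should state a bit more carefully is the passage from ``four distinct purely imaginary eigenvalues of $JB$'' to linear stability of the monodromy matrix $\exp(2\pi JB)$: diagonalizability of $JB$ over $\C$ gives semisimplicity of the exponential even in the resonant case $e^{2\pi i\omega_1}=e^{2\pi i\omega_2}$, which is all the theorem claims.
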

Numerical computation shows that $m^*\approx0.854$,
see Figure \ref{fig:1} (b).

The second  case is when the primaries form a $(1+n)$-gon central configuration.

Suppose $n$ equal masses are placed at the vertices of a regular polygon, rotating around $m_0$ at its center.
Here note that $n=N-2$.
In \cite{BaE},
D.~Bang and B.Elmabsout determine the positions of relative equilibrium for the massless body lying
(up to rotations of angle $\frac{2\pi}{n}$)
on the semi-axes $\th=0$ and $\th=\frac{\pi}{n}$
(see Figure \ref{fig:2}):

\hangafter 1
\hangindent 3.5em
\;(i) On the semi-axis $\th=0$, there are two solutions
$\rho=S_1>1$, and $\rho=S_2,0<S_2<1$;

\hangafter 1
\hangindent 3.5em
(ii) On the semi-axis $\th=\frac{\pi}{n}$, there is one solution
$\rho=S_3>1$.

\noindent Here, $S_1,S_2$ and $S_3$ also denote the 
positions of the massless body, respectively.
In the same paper,
D.~Bang and B.Elmabsout have proved that $S_1$ and $S_2$
are spectrally unstable for every $m_0>0$,
while $S_3$ is linearly stable for $n\ge7$ if
$\frac{m_0}{m}$ is sufficiently large.
Their analysis was based on the methods which
analytically compute means of functions on regular $n$-gons
and study cyclic quantities of the complex variables in
\cite{BaE2}.
However, they just consider the circular orbits,
i.e., the eccentricity $e=0$.

\begin{figure}[htbp]
	\centering
    \includegraphics[width=10cm]{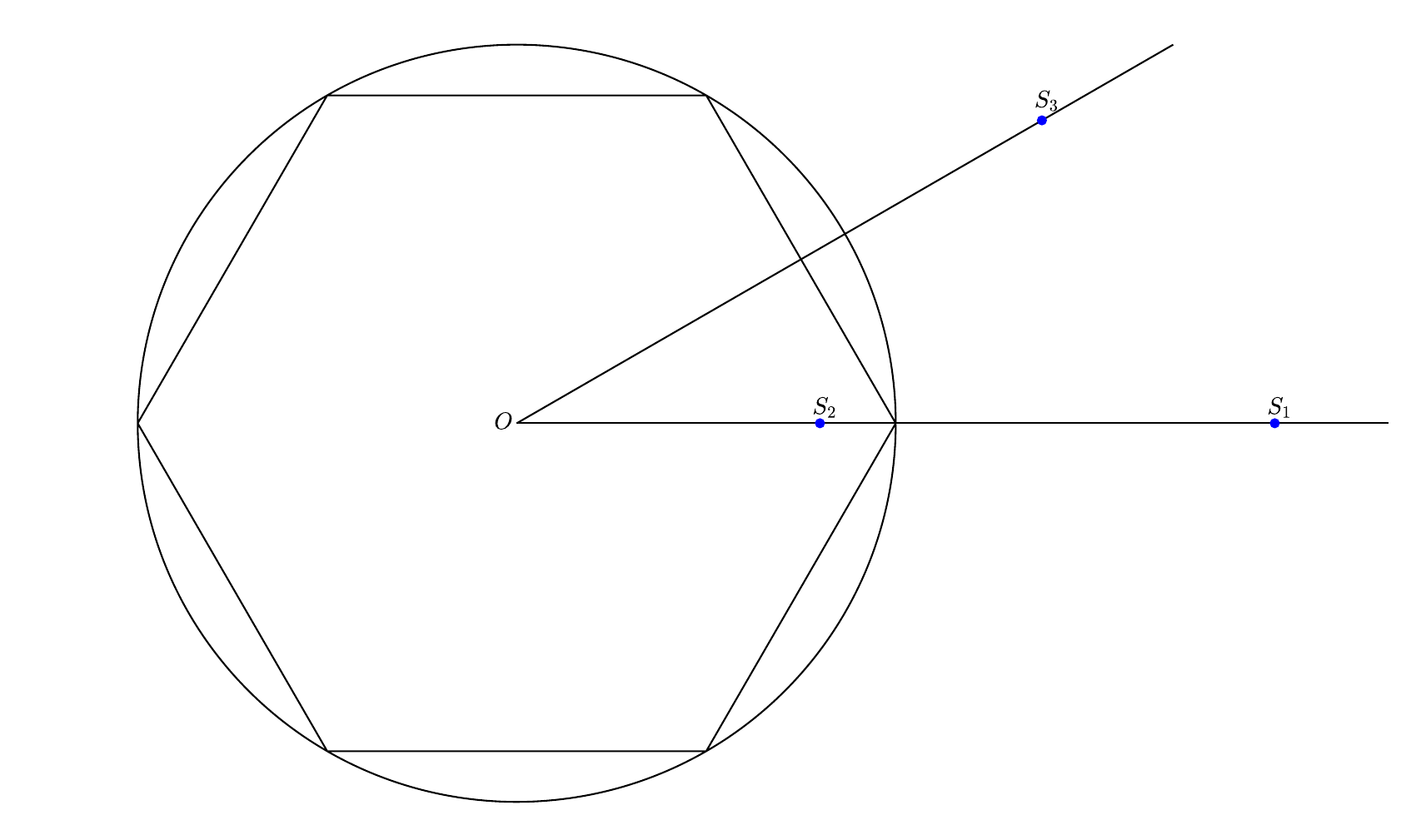}
	\caption{Positions of relative equilibrium for the massless body in the case $m_0>0$.}
	\label{fig:2}
\end{figure}

For the elliptic relative equilibrium
with eccentricity $e\in[0,1)$, we have
\begin{theorem}\label{no stability of S_1 and s_2 }
If the massless body lies at $S_1$ or $S_2$,
there exists an $e_0\in(0,1)$ such that
    the elliptic relative equilibrium is linearly unstable when ${m_0\over M}$ is sufficiently large and the eccentricity $e\in [0,e_0]$.
\end{theorem}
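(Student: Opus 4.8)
The plan is to treat the eccentricity $e$ and the small parameter $\epsilon=M/m_0$ as two perturbation parameters acting on the degenerate Kepler limit of the reduced system in Theorem \ref{linearized.Hamiltonian}, and to show that the spectral instability that Bang--Elmabsout \cite{BaE} established at $e=0$ survives throughout an interval $e\in[0,e_0]$, uniformly as $m_0/M\to\infty$. First I would exploit the reflection symmetry: since $S_1$ and $S_2$ lie on the symmetry axis $\theta=0$ of the regular $n$-gon, the matrix $D$ in \eqref{matrix.D} is diagonal in the radial/tangential frame, $D=\diag(\lambda_3,\lambda_4)$. Then I would expand $D$ as $m_0/M\to\infty$: the central mass dominates both sums in \eqref{matrix.D}, the co-rotation radius tends to the polygon radius, and a direct computation gives $\mu\to m_0$ and $D\to\diag(3,0)$, which is exactly the linearization of a pure Keplerian ellipse. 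Thus $D=\diag(3,0)+\epsilon D_1+O(\epsilon^2)$, and the whole instability is a first-order effect in $\epsilon$.

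At $e=0$ the system \eqref{eqn:LinearHam1} is autonomous and the characteristic polynomial of its generator factors as $\nu^4+(4-\lambda_3-\lambda_4)\nu^2+\lambda_3\lambda_4$. In the Kepler limit this is $\nu^2(\nu^2+1)$, so the generator carries a $2\times2$ Jordan block at the eigenvalue $0$ (the Keplerian shear) together with the semisimple pair $\pm i$; correspondingly the Poincar\'e map has all four eigenvalues equal to $1$. The soft (shear) mode splits according to the near-zero root $\nu^2\approx-\det D/(4-\mathrm{tr}\,D)=-3\epsilon(D_1)_{22}+O(\epsilon^2)$, so it becomes a real hyperbolic pair precisely when $\det D<0$. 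The content of the Bang--Elmabsout instability of $S_1,S_2$ is exactly this sign condition, $\det D<0$ (equivalently $\lambda_4<0$) in the limit; I would take this as the single sign input, which spares us from recomputing their averages over the $n$-gon and treats $S_1$ and $S_2$ uniformly.

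For $e>0$ the system is non-autonomous, so I would track the symplectic monodromy of the fundamental solution $\gamma_e$ of \eqref{eqn:LinearHam1} through the coefficients $c_1=\mathrm{tr}\,\gamma_e(2\pi)$ and $c_2$ of its reciprocal characteristic polynomial, which are real-analytic in $(e,\epsilon)$, and reduce the spectrum via $\lambda=\mu+\mu^{-1}$. The decisive structural fact is that the limiting system $D=\diag(3,0)$ is the pure elliptic Kepler problem, whose Poincar\'e map is degenerate with all eigenvalues $1$ for \emph{every} $e\in[0,1)$; hence at $\epsilon=0$ one has $c_1\equiv4$ and $c_2\equiv6$ identically in $e$, and the larger $\lambda$-root satisfies $\lambda_1(e,\epsilon)-2=\epsilon\,\kappa(e)+o(\epsilon)$ for a continuous coefficient $\kappa(e)$ whose value at $e=0$ has the sign of $-\det D>0$. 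Since $\kappa$ is real-analytic in $e$ with $\kappa(0)>0$, continuity gives $\kappa(e)>0$ on some maximal interval $[0,e_0]$, so $\lambda_1>2$ and $\gamma_e(2\pi)$ has a real eigenvalue off $\U$ for all $e\in[0,e_0]$ once $m_0/M$ is large; this is the asserted linear instability, with $e_0$ independent of the mass ratio.

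The hard part is the singular nature of this perturbation. Because the Keplerian monodromy is \emph{non-semisimple}, the eigenvalues themselves split like $\sqrt{\epsilon}$ and are not differentiable at $\epsilon=0$; worse, the discriminant $c_1^2-4(c_2-2)$ is degenerate, vanishing to order $O(\epsilon^2)$ rather than $O(\epsilon)$, precisely because the two $\lambda$-roots are linked by the Jordan block. A naive application of the quadratic formula therefore mispredicts the scaling, and the genuine first-order invariant $\kappa(e)$ must be isolated by expanding $c_1=4+\epsilon a(e)+\cdots$ and $c_2=6+\epsilon b(e)+\cdots$ to second order and checking the cancellation $8a(e)-4b(e)=0$ of their leading coefficients. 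The remaining analytic work is to verify that the $O(e)$ oscillating part of $D/(1+e\cos\theta)$ contributes to $\kappa(e)$ only through terms that stay proportional to $\epsilon$, so that $\kappa(e)$ is a bona fide analytic function of $e$ alone in the limit and the choice of $e_0$ is uniform in $m_0/M$.
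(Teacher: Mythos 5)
There is a genuine gap, and it sits at the very first quantitative step: your claimed limit $D\to\diag(3,0)$ (the ``pure Kepler'' linearization) is false for $S_1$ and $S_2$. As $m_0/M\to\infty$ the equilibrium radii $S_1>1$ and $S_2<1$ both tend to $1$, i.e.\ the massless body collides with the ring of $n$ primaries in the limit; the tidal contribution of the nearest ring mass therefore does \emph{not} become negligible even though $M\to 0$, because the distance to that mass shrinks at a rate coupled to $M$ through the equilibrium equation. The paper's Proposition \ref{the eigenvalues of the case S_1 and S_2} carries out exactly this delicate limit (the singular terms $\frac{1}{n}\frac{s(1-s^3)}{(1-s)^3}$ dominate both numerator and denominator) and finds $A/\omega^2\to 2$, $B/\omega^2\to 6$, hence $\lambda_3\to 9$, $\lambda_4\to -3$ and $\det D\to -27$. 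So the instability is an $O(1)$ hyperbolicity of the limiting operator, not a first-order effect in $\epsilon=M/m_0$. Your entire second half --- the non-semisimple Keplerian monodromy, the $\sqrt{\epsilon}$ eigenvalue splitting, the degenerate discriminant, the cancellation $8a(e)-4b(e)=0$ and the invariant $\kappa(e)$ --- is machinery built to resolve a degeneracy that is not present; at $e=0$ the limiting generator has characteristic polynomial $\nu^4-2\nu^2-27$, with one real pair of roots already far from the imaginary axis.

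With the correct limit the argument becomes much simpler and is what the paper does: the limiting operator $\bar\cA(2,6,e)$ coincides with the operator $A(\beta,e)$ of the elliptic Euler problem in \cite{Zhou2017} at $\beta=3$, whose monodromy is known to have normal form $R(\theta)\diamond D(2)$ for all $e\in[0,e_0]$; since having an eigenvalue off $\U$ is an open condition, the instability persists for $m_0/M$ sufficiently large. If you want to avoid citing \cite{Zhou2017}, you would still need to (i) prove the limits $A/\omega^2\to2$, $B/\omega^2\to6$ by handling the near-collision asymptotics, and (ii) control the monodromy of the genuinely hyperbolic limit system for $e>0$ --- neither of which your $\epsilon$-perturbation scheme addresses. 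Your single correct input, $\lambda_4<0$ for all $m_0>0$ (Lemma 2 of \cite{BaE}), does give instability at $e=0$ directly, but the passage to $e>0$ cannot be done by perturbing off a Kepler monodromy that the system never approaches.
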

\begin{theorem}\label{stability of S_3}
If the massless body lies at $S_3$,
    the elliptic relative equilibrium is linear stability when ${m_0\over M}$ is sufficiently large.
\end{theorem}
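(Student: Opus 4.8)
The plan is to exhibit the ERE at $S_3$ as a small perturbation of a pure Kepler (two-body) motion and to show that, although the Kepler limit is degenerate, the perturbation produced by the $n$ equal masses on the polygon pushes the reduced system onto the \emph{stable} side of the bifurcation diagram of Theorem~\ref{thm:RE.norm.form}. All the information is carried by the matrix $D$ in \eqref{matrix.D}, so the first task is to compute it at $S_3$. Since $S_3$ lies on the symmetry axis $\th=\frac{\pi}{n}$ of the $(1+n)$-gon, reflection across this axis preserves the configuration and forces $D$ to be diagonal in the radial/tangential frame; I write $D=\diag(\lm_3,\lm_4)$ with $\lm_3$ the radial and $\lm_4$ the tangential eigenvalue.

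First I would perform the asymptotics as ${m_0\over M}\to\infty$. Solving the relative-equilibrium balance gives $S_3=1+O(M/m_0)$, and inserting this into \eqref{matrix.D} --- the leading term being the central mass, whose two-body Hessian is proportional to $\diag(2,-1)$ --- yields $(\lm_3,\lm_4)\to(3,0)$, so that the trace of $D$ tends to $3$ and $\bb=9-(\lm_3-\lm_4)^2\to0$. This is exactly the degenerate Kepler value sitting on the boundary of the stability region, so the decisive quantity is the sign of the leading correction to $\lm_4$. Summing the tangential second derivatives of the potential over all $n$ vertices, together with the $O(M/m_0)$ contributions coming from $S_3\neq1$ and from the normalisation in \eqref{matrix.D}, I expect $\lm_4=c\,{M\over m_0}+o(M/m_0)$ with $c>0$ precisely in the regime $n\ge7$, so that $\bb\to0^{+}$. (By contrast, at $S_1,S_2$ the nearest vertex lies in the radial direction and produces a negative tangential Hessian, giving $\lm_4<0$ and $\bb<0$; this is the mechanism behind the companion instability statement.)

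For the circular orbit $e=0$ the linearised system \eqref{eqn:LinearHam1} is autonomous, and a direct computation of the characteristic polynomial of $JB$ gives
\begin{equation}
\nu^4+(4-\lm_3-\lm_4)\nu^2+\lm_3\lm_4=0,
\end{equation}
so that spectral stability is equivalent to $\lm_3+\lm_4<4$, $\lm_3\lm_4>0$ and $(4-\lm_3-\lm_4)^2\ge4\lm_3\lm_4$. With $(\lm_3,\lm_4)$ near $(3,0^{+})$ all three inequalities hold and the four eigenvalues are distinct points of $\U$, which both recovers the Bang--Elmabsout result \cite{BaE} and gives strong linear stability at $e=0$. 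To pass to $e\in(0,1)$ I would feed $(\lm_3,\lm_4)$ into the $\om$-Maslov index machinery already developed for the first case: since the trace of $D$ tends to $3$, the pair is $O(M/m_0)$-close to the elliptic Lagrangian family of \cite{HLS}, and Theorem~\ref{thm:RE.norm.form} identifies a small right half-neighbourhood of $\bb=0$ with Region~I, where the ERE is strongly linearly stable for every $e\in[0,1)$.

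The hard part is exactly this last certification. Because $\bb\to0$ is the common boundary of Regions I and II and the trace of $D$ need not be \emph{exactly} $3$, a one-parameter argument in $\bb$ alone does not suffice: I must control the full two-parameter behaviour of the monodromy near the double eigenvalue $1$ of the degenerate Kepler limit, uniformly in $e$. Concretely I would compute the Kepler-limit monodromy over $\th\in[0,2\pi]$, locate its Jordan block at the eigenvalue $1$, and determine the Krein signature governing the splitting of this eigenvalue under the $(\lm_3,\lm_4)$-perturbation; the $S_3$ sign of the correction should split the collision into a conjugate pair on $\U$ (stability) rather than a real pair off $\U$. Showing that this splitting keeps the stable sign --- equivalently that the relevant $\om$-index is constant along the whole interval $e\in[0,1)$ --- is the main obstacle, and it is here that both $n\ge7$ and the largeness of ${m_0\over M}$ are genuinely used.
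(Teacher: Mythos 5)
Your asymptotics $(\lambda_3,\lambda_4)\to(3,0^{+})$ and the circular-case characteristic polynomial $\nu^4+(4-\lambda_3-\lambda_4)\nu^2+\lambda_3\lambda_4=0$ are both correct and consistent with Proposition \ref{A.3}, so the proposal does recover the $e=0$ statement. But there is a genuine gap exactly where you flag it: the case $e\in(0,1)$, which is the actual content of the theorem beyond Bang--Elmabsout, is never proved. You cannot route the argument through Theorem \ref{thm:RE.norm.form}, because that theorem presupposes $\lambda_3+\lambda_4=3$ identically (the Euler--Moulton reduction to the elliptic Lagrangian family), whereas at $S_3$ one has $\lambda_3+\lambda_4=2+2A/\omega^2>3$ by Proposition \ref{A.3}; and the Krein-signature analysis of the splitting of the degenerate Kepler monodromy that you propose as a substitute is only announced, not carried out. (Two smaller points: the restriction to $n\ge 7$ is not needed --- the limit of $l_3/M$ computed in Proposition \ref{A.3} is positive for every $n$ --- and the sign of the tangential correction is controlled there by an explicit trigonometric sum, not by a near-cancellation that switches sign at $n=7$.)

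The paper's proof avoids the degenerate two-parameter perturbation problem entirely. It works with the Hu--Long--Ou operator $\bar\cA(\alpha,\beta,e)$ with $\alpha=A/\omega^2$ and $\beta=|B|/\omega^2$, and uses the two strict inequalities of Proposition \ref{A.3}, namely $2A-\omega^2>0$ (i.e.\ $\alpha>\tfrac12$) and $l_3>0$ (i.e.\ $\alpha+1>\beta$), to show that $\cA(m,e)$ is positive definite on $\ol{D}(1,2\pi)$ for every $e\in[0,1)$, hence $\phi_1=\nu_1=0$; it then imports $\phi_{-1}(\cA(m,e))=2$ and $\nu_{-1}=0$ for ${m_0\over M}$ large from \cite{Hu2020}, and the splitting-number identity \eqref{5.A1} together with Lemma \ref{Lm:splitting.numbers} forces $\gamma(2\pi)\approx R(\alpha)\diamond R(\beta)$, which is linear stability uniformly in $e$. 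To complete your argument you would have to supply an analogue of this uniform-in-$e$ index computation (or actually perform the monodromy-splitting analysis you sketch); as written, the proposal establishes the theorem only at $e=0$.
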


This paper is organized as follows. We first introduce the generalized symplectic reduction method for the restricted $N$-body problem in Section \ref{sec:2}. We then use the $\om$-Maslov index theory to determine the linear stability of the restricted $N$-body problem
when the primaries form an Euler-Moulton central configuration in Section \ref{sec:3}. We also consider the symmetric case by assuming $m_1 = m_3$
of the restricted $4$-body problem in Section \ref{sec:3}. In Section \ref{sec:4}, we study the linear stability problem when the primaries form an $(1+n)$-gon central configuration.
Theorem \ref{no stability of S_1 and s_2 }
and Theorem \ref{stability of S_3}
are also proved there.

\setcounter{equation}{0}
\section{Reduction for the restricted \texorpdfstring{$N$}{N}-body problem}\label{sec:2}
The central configuration coordinates
for a class of periodic solutions to the $n$-body problem was introduced in \cite{MS}. 
In this section, we generalize this reduction to the restricted $N$-body problem.
For the given masses $m=(m_1,m_2,\ldots,m_{N-1})\in (\R^+)^{N-1}$
of $N-1$ primaries, 
let $a=(a_1,a_2,\ldots,a_{N -1})$ be an $(N - 1)$-body central configuration of $m$  satisfying $m_1+m_2+\ldots+m_{N-1}=1$.
Using normalization $2I(a) = \sum_{i = 1}^{N-1}m_i |a_i|^2 = 1$ and assuming  $\mu = U(a) = \sum_{1\le i<j\le N-1}\frac{m_im_j}{|a_i-a_j|}$, we have
\begin{align}
  \sum_{j=1,j\ne i}^{N-1}\frac{m_j(a_{j}-a_{i})}{|a_{j}-a_{i}|^3}
          = -\frac{U(a)}{2I(a)}a_{i}=-\mu a_{i}.    \label{eq.of.cc}
\end{align}

\begin{proposition}\label{P2.1}
There exists a symplectic coordinate change
$
  \xi = (P,q)^T
    \;\mapsto\; \bar{\xi}
    = ( \bar{Z}, \bar{z})^T,$
such that using the true anomaly $\th$ as the variable,  the resulting Hamiltonian function of the
$n$-body problem is given by
\begin{align}\lb{eqn:red.Ham}
  H(\theta,\bar{Z},\bar{z})=\frac{1}{2}|\bar{Z}|^2 + \bar{z}\cdot J\bar{Z}
+ \frac{p-r}{2p}|\bar{z}|^2
- \frac{r}{\sg}\sum_{i=1}^{N-1}{m_i\over|\sg a_i-\bar{z}|},
\end{align}
where $J =( 
\begin{smallmatrix}
       0 &-1 \\ 1 & 0
\end{smallmatrix})$, $r(\th)=\frac{p}{1+e\cos\th}$,
$\mu$ is defined by \eqref{eq.of.cc}, $\sg=(\mu p)^{1/4}$ and $p$ is given in \eqref{rTh}.
\end{proposition}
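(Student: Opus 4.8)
The plan is to realize the coordinate change $\xi=(P,q)^T\mapsto\bar\xi=(\bar Z,\bar z)^T$ as a composition of elementary, possibly time-dependent, canonical transformations, following the central-configuration reduction of Meyer and Schmidt. Starting from the inertial Hamiltonian \eqref{HaFun}, in which the primaries occupy the moving positions $r(\theta)R(\theta)a_i$, I would successively (i) pass to the frame co-rotating with $R(\theta)$, (ii) pass to the pulsating frame that rescales lengths by $r(\theta)$, (iii) reparametrize time from $t$ to the true anomaly $\theta$, (iv) rescale the spatial variable by $\sigma=(\mu p)^{1/4}$, and (v) apply a symplectic shear in the momentum that removes a residual cross term. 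The only subtlety is that the spatial steps are time dependent, so for each one I would record the map through a type-2 generating function $F_2(q,\bar P,t)$ and add the correction $\partial F_2/\partial t$ to the Hamiltonian; since a composition of canonical maps is canonical, the resulting $\bar\xi$ is symplectically conjugate to $\xi$.

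Concretely, the rotation $q=R(\theta)q_1,\ P=R(\theta)P_1$ generated by $F_2=q^TR(\theta)P_1$ leaves the kinetic and potential parts invariant (because $R^TJR=J$ and $|R(\cdot)|=|\cdot|$) and contributes the Coriolis term $+\dot\theta\,q_1^TJP_1$. The dilation $q_1=r q_2,\ P_1=r^{-1}P_2$ generated by $F_2=r^{-1}q_1\cdot P_2$ turns the potential into $-r^{-1}\sum_i m_i/|a_i-q_2|$ and adds the correction $-(\dot r/r)\,q_2\cdot P_2$. Using that the primaries move as an elliptic relative equilibrium, so that $r(\theta)$ obeys the Kepler radial law with parameter $\mu$ from \eqref{eq.of.cc} and the orbit \eqref{rTh} has angular momentum $r^2\dot\theta=\sqrt{\mu p}=\sigma^2$, the quantity $\dot\theta$ depends on $\theta$ alone; dividing the Hamiltonian by this phase-independent $\dot\theta$ legitimately reparametrizes time by $\theta$, and the rescaling $q_2=\sigma^{-1}\bar z,\ P_2=\sigma\bar Z$ then normalizes the kinetic term to $\tfrac12|\bar Z|^2$, the Coriolis term to $\bar z\cdot J\bar Z$, and the potential to the stated $-\tfrac{r}{\sigma}\sum_i m_i/|\sigma a_i-\bar z|$. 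At this stage there remains a symmetric cross term $-(r'/r)\,\bar z\cdot\bar Z$, which I would cancel by the shear $\bar Z\mapsto\bar Z+(r'/r)\bar z$; this leaves the antisymmetric term untouched and, after including its generating-function correction, produces a quadratic term $\bigl(\tfrac{r''}{2r}-\tfrac{(r')^2}{r^2}\bigr)|\bar z|^2$.

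The final identification rests on the Binet equation for the Kepler orbit \eqref{rTh}: writing $u=1/r=(1+e\cos\theta)/p$ one has $u''+u=1/p$, whence $\tfrac{r''}{2r}-\tfrac{(r')^2}{r^2}=-\tfrac{u''}{2u}=\tfrac12-\tfrac{r}{2p}=\tfrac{p-r}{2p}$, which is exactly the coefficient appearing in \eqref{eqn:red.Ham}; assembling the five steps then yields the reduced Hamiltonian as stated. I expect the main obstacle to be the careful bookkeeping of the $\partial F_2/\partial t$ corrections through the composition, together with verifying that the shear in step (v) simultaneously eliminates the symmetric cross term and reproduces precisely the coefficient $\tfrac{p-r}{2p}$; both hinge on the radial Kepler identity and on the relation $\sigma^2=\sqrt{\mu p}=r^2\dot\theta$ that ties the spatial normalization to the time reparametrization. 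A consistency check is that linearizing the resulting Hamiltonian at $\zeta_0$ must reproduce $B(\theta)$ and $H_2$ of Theorem \ref{linearized.Hamiltonian}.
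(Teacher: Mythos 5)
Your proposal is correct, and it follows the same broad Meyer--Schmidt strategy as the paper (a chain of time-dependent canonical transformations tracked through generating-function corrections), but with a genuinely different decomposition. The paper uses four steps and, crucially, folds your step (v) into its dilation step: instead of the naive scaling it takes $\hat{Z}=\frac{1}{r}\td{Z}+\dot{r}\td{z}$, generated by $\td{F}=\frac{1}{r}\td{Z}\cdot\hat{z}+\frac{\dot{r}}{2r}|\hat{z}|^2$, so the symmetric cross term is cancelled already in physical time $t$ and the quadratic coefficient emerges as $\frac{r\ddot{r}}{2}$; after dividing by $\dot\th$ this is simplified via the radial Kepler law $\ddot{r}=\mu\bigl(\frac{p}{r^3}-\frac{1}{r^2}\bigr)$ together with $r^2\dot\th=\sg^2$, and the final $\sg$-dilation finishes the proof. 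You instead keep each map elementary (pure rotation, pure dilation, time change, constant $\sg$-scaling) and carry the cross term $-\frac{r'}{r}\bar{z}\cdot\bar{Z}$ into the true-anomaly frame, where a separate shear with $c=\frac{r'}{r}$ removes it; your closing identity is then the Binet equation $u''+u=\frac{1}{p}$ for $u=1/r$ rather than the radial equation in $t$. I verified your key computation: with the shear realized as the substitution $\bar{Z}\mapsto\bar{Z}+\frac{r'}{r}\bar{z}$ inside the Hamiltonian (note the orientation matters --- with the opposite sign the cross term doubles rather than cancels, though your stated quadratic output shows you computed the correct direction), the residual quadratic coefficient is $-\frac{c^2}{2}+\frac{c'}{2}=\frac{r''}{2r}-\frac{(r')^2}{r^2}=-\frac{u''}{2u}=\frac{p-r}{2p}$, exactly as in \eqref{eqn:red.Ham}. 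What each route buys: the paper's folded shear means the cross term never appears after Step 2 and only the $t$-time Kepler dynamics is invoked, at the price of a less transparent generating function; your version makes every map structurally simple and shifts all Kepler input to the single clean identity $u''+u=\frac{1}{p}$, at the price of bookkeeping the cross term through two extra steps. Since the shear and the time reparametrization commute in the relevant sense, the two compositions produce the same final symplectic change of coordinates.
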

\begin{proof}
  Inspired by Lemma 3.1 of \cite{MS}, we carry the coordinate
  changes in four steps.
  
  {\bf Step 1.} {\it Rotating coordinates via the matrix $R(\th(t))$ in time $t$.}
  
  We first change the coordinates $\xi$ to $\hat{\xi}=(\hat{Z}, \hat{z})^T \in (\R^2)^2$
  which rotates with the speed of the true anomaly. The transformation matrix is given by the rotation
  matrix $R(\th) = (
  \begin{smallmatrix}
         \cos\th &-\sin\th \\\sin \th & \cos\th
  \end{smallmatrix})$. The generating function of this
  transformation is given by
  \be  \hat{F}(t, P, \hat{z})
         = -P\cdot R(\th)\hat{z},  \lb{P-02}\ee
  and for $1\le i\le n-1$ the transformation is given by
  \be
  q = -\frac{\pt \hat{F}}{\pt P} = R(\th)\hat{z},  \quad 
  \hat{Z} = -\frac{\pt \hat{F}}{\pt \hat{z}} = R(\th)^TP,  \lb{P-03}. \ee
  Writing $\dot{R}(\th(t))=\frac{d}{dt}R(\th(t))$, and noting that $R(\th)^T=R(\th)^{-1}$ and
  $\dot{R}(\th)=\dot{\th}JR(\th)$ we obtain the function
  \begin{align}
    \hat{F}_t
  \equiv \frac{\pt \hat{F}}{\pt t} = -P\cdot\dot{R}(\th)\hat{z} 
  = -\hat{Z}\cdot R(\th)^T\dot{R}(\th)\hat{z} 
  = -\dot{\th}\left(\hat{Z}\cdot R(\th)^TJR(\th)\hat{z}\right) 
  = \dot{\th}(\hat{z}\cdot J\hat{Z}).
  \end{align}
  
  Because $\th=\th(t)$ depends on $t$, by adding the function
  $\frac{\pt \hat{F}}{\pt t}$ to the Hamiltonian function $H$ in \eqref{HaFun}, as in Line 5 in p.272 of \cite{MS}, we obtain the Hamiltonian function $\hat{H}$ in the
  new coordinates:
  \be
  \hat{H}(t,\hat{Z},\hat{z}) = H_0(P,q) + \hat{F}_t  
  = \frac{1}{2}|\hat{Z}|^2 + (\hat{z}\cdot J\hat{Z})\dot{\th}
               - \sum_{i=1}^{N-1}{m_i\over|r(t)a_i-\hat{z}|},   \lb{P-07}
  \ee
  where the variables of $H_0$ are functions of $\th$, $\hat{Z}$, $\hat{z}$,  given by \eqref{P-03}.

  {\bf Step 2.} {\it Dilating coordinates via the polar radius $r=|z(t)|$.}
  
  We change the coordinates $\hat{\xi}$ to
  $\td{\xi}=(\td{Z},\td{z})$
  which dilate with $r=|z(t)|$ given by \eqref{rTh}. The position coordinates are transformed by $\hat{z} = r\td{z}$.
  It is natural to scale the momenta by $1/r$ to get $\hat{Z}=\td{Z}/r$. 
  But it
  turns out that the new transformation
  \be  \hat{Z}=\frac{1}{r}\td{Z}+\dot{r}\td{z}  \lb{P-09}\ee
  makes the resulting Hamiltonian function simpler. 
  This transformation is generated by the function
  \be  \td{F}(t, \td{Z}, \hat{z})
    = \frac{1}{r}\td{Z}\cdot\hat{z} + \frac{\dot{r}}{2r}|\hat{z}|^2,
                          \lb{P-10}\ee
  and is given by
  \be
  \td{z} = \frac{\pt \td{F}}{\pt \td{Z}} = \frac{1}{r}\hat{z},   
  \hat{Z} = \frac{\pt \td{F}}{\pt \hat{z}} = \frac{1}{r}\td{Z}+\frac{\dot{r}}{r}\hat{z} = \frac{1}{r}\td{Z}+\dot{r}\td{z}, \nn\ee
  with
  \be  \frac{\pt \td{F}}{\pt t}
  = -\frac{\dot{r}}{r^2}\td{Z}\cdot\hat{z}
         + \frac{\ddot{r}r-\dot{r}^2}{2r^2}|\hat{z}|^2
  = -\frac{\dot{r}}{r}\td{Z}\cdot\td{z}
         + \frac{\ddot{r}r-\dot{r}^2}{2}|\td{z}|^2,     \lb{P-11}\ee
  by \eqref{P-09}.
  
  In this case, as in the last two lines on p.272 of \cite{MS}, the
  Hamiltonian function $\hat{H}$ in \eqref{P-07}
  becomes the new Hamiltonian function $\td{H}$ in the new coordinates:
  \bea
  \td{H}(t,\td{Z},\td{z})
  &\equiv& \hat{H}(t, \hat{Z}, \hat{z}) + \td{F}_t \nn\\
  &=& \frac{1}{2r^2}|\td{Z}|^2 + \frac{\dot{r}}{r}\td{Z}\cdot\td{z}
     + \frac{\dot{r}^2}{2}(|\td{z}|^2) + (\td{z}\cdot J\td{Z})\dot{\th}
     - \sum_{i=1}^{N-1}{m_i\over|r(t)a_i-r(t)\tilde{z}|} + \tilde{F}_t   \nn\\
  &=& \frac{1}{2r^2}|\td{Z}|^2+ \frac{r\ddot{r}}{2}|\td{z}|^2
     +(\td{z}\cdot J\td{Z})\dot{\th}
     - \frac{1}{r}\sum_{i=1}^{N-1}{m_i\over|a_i-\tilde{z}|}.   \lb{P-12}\eea

  {\bf Step 3.} {\it Coordinates via the true anomaly $\th$ as the independent variable.}
  
  Here we use the true anomaly $\th\in [0,2\pi]$ as an independent variable instead of $t\in [0,T]$ to simplify
  analysis. This is achieved by dividing the Hamiltonian function $\td{H}$ in \eqref{P-12} by $\dot{\th}$. Assuming
  $\dot{\th}(t)>0$ for all $t\in [0,T]$ and  $\td{\xi}\in W^{1,2}(\R/(T\Z), \R^8)$ we consider the action functional
  corresponding to the Hamiltonian system:
  \bea f(\td{\xi})
  &=& \int_0^T(\frac{1}{2}\dot{\td{\xi}}(t)\cdot J\td{\xi}(t) - \td{H}(t,\td{\xi}(t))) dt  \nn\\
  &=& \int_0^{2\pi}\left(\frac{1}{2}\frac{\dot{\td{\xi}}(t(\th))}{\dot{\th}(t)}\cdot J\td{\xi}(t)
            - \frac{\td{H}(t,\td{\xi}(t(\th)))}{\dot{\th}(t)}\right) d\th  \nn\\
  &=& \int_0^{2\pi}\left(\frac{1}{2}\td{\xi}'(\th)\cdot J\td{\xi}(\th) - \td{H}(\th,\td{\xi}(\th))\right) d\th.  \nn\eea
  Here we used $\td{\xi}'(\th)$ to denote the derivative of $\td{\xi}(\th)$ with respect to $\th$. But
  in the following we shall still write $\dot{\td{\xi}}(\th)$ for the derivative with respect to $\th$ instead of
  $\td{\xi}'(\th)$ for notation simplicity.
  
  Note that the elliptic Kepler orbit \eqref{rTh} satisfies
  $ r(t)^2\dot{\th}(t) = \sqrt{\mu p} = \sqrt{\mu a(1-e^2)} = \sg^2$ with $\sg=(\mu p)^{1/4}.$
  Note that $a=\mu^{1/3}(T/2\pi)^{2/3}$ with $T$ being the minimal period of the orbit \eqref{rTh}, we have
  $$  \sg = (\mu a(1-e^2))^{1/4} = \mu^{1/3}(\frac{T}{2\pi})^{1/6}(1-e^2)^{1/4}\;\in\; (0,\mu^{1/3}(\frac{T}{2\pi})^{1/6}] $$
  depending on $e$, when the mass $\mu$ and the period $T$ are fixed. Note that similarly we have $p=\sg^4/\mu$ depends
  on $e$ too. Note that the function $r$ satisfies
  $$  \ddot{r} = \frac{\mu p}{r^3} - \frac{\mu}{r^2} = \mu\left(\frac{p}{r^3} - \frac{1}{r^2}\right).  $$
  
  Therefore we get the Hamiltonian function $\td{H}$ in the new coordinates:
  \bea
  \td{H}(\th,\td{Z},\td{z})
       &\equiv& \frac{1}{\dot{\th}}\td{H}(t,\td{Z},\td{z})   \nn\\
  &=& \frac{1}{2r^2(t)\dot{\th}(t)}|\td{Z}|^2
         + \frac{r(t)\ddot{r}(t)}{2\dot{\th}(t)}|\td{z}|^2 
         + \td{z}\cdot J\td{Z}
         - \frac{1}{r(t)\dot{\th}(t)}\sum_{i=1}^{N-1}{m_i\over|a_i-\tilde{z}|}  \nn\\
  &=& \frac{1}{2\sg^2}|\td{Z}|^2+ \td{z}\cdot J\td{Z}
         + \frac{\mu(p-r(\th))}{2\sg^2}|\td{z}|^2
         - \frac{r(\th)}{\sg^2}\sum_{i=1}^{N-1}{m_i\over|a_i-\tilde{z}|},  \lb{P-13}\eea
  where $r(\th)=p/(1+e\cos\th)$. Note that now the minimal period $T$ of the elliptic solution $\td{z}=\td{z}(\th)$
  becomes $2\pi$ in the new coordinates in terms of true anomaly $\th$ as an independent variable.

  {\bf Step 4.} {\it Coordinates via the dilation of $\sg=(p\mu)^{1/4}$.}
  
  The last transformation is the dilation $(\td{Z},\td{z}) \;=\;
        (\sg\bar{Z},\sg^{-1}\bar{z})$.
  This transformation is symplectic and independent of the true anomaly $\th$. Thus the
  Hamiltonian function $\td{H}$ in \eqref{P-13} becomes a new Hamiltonian function:
  \be
  H(\th,\bar{Z},\bar{z}) \equiv
      \td{H}(\th,\sg\bar{Z},\sg^{-1}\bar{z})  
      = \frac{1}{2}|\bar{Z}|^2 + \bar{z}\cdot J\bar{Z}
        + \frac{p-r}{2p}|\bar{z}|^2
        - \frac{r}{\sg}\sum_{i=1}^{N-1}{m_i\over|\sg a_i-\bar{z}|}.  \lb{P-15}
  \ee
  The proof is complete. 
\end{proof}

Suppose that  $(P(t),q(t))^T$ is the solution to  system \eqref{1.2} with
$
q(t)=r(t)R(\theta(t))a_{N}$, and $P(t)=\dot{q}(t).$
By Proposition  \ref{P2.1}, it is transformed to the new solution $(\bar{Z}(\theta),\bar{z}(\theta))^T$ in the true anomaly $\theta$ as the new
variable for the original Hamiltonian function $H$ of \eqref{HaFun}, which is
given by
\be
\bar{Z}(\theta)
=(0,  \sigma)^T
\qquad
\bar{z}(\theta) = 
(\sigma, 0)^T
\lb{sigma-solution}\ee
Therefore, we can prove the Theorem \ref{linearized.Hamiltonian} directly. 

\begin{proof}[Proof of Theorem \ref{linearized.Hamiltonian}]
       In this proof we omit all the upper bars on the variables of $H$ in \eqref{eqn:red.Ham}. By \eqref{eqn:red.Ham}, we have
       \begin{align}
        H_z=JZ+\frac{p-r}{p}z
        -\frac{r}{\sigma}{\partial\over\partial z}\left(\sum_{i=1}^{N-1}{m_i\over|\sg a_i-\bar{z}|}\right)|_{z=\sg a_{N}},  \quad H_{Z} =Z-Jz, 
       \end{align}
       and
       \begin{align}
        H_{ZZ}=I,\quad H_{Zz}=-J, \quad H_{zZ}=J, \quad 
        H_{zz}=\frac{p-r}{p}I-\frac{r}{\sigma}{\partial^2\over\partial z^2}\left(\sum_{i=1}^{N-1}{m_i\over|\sg a_i-\bar{z}|}\right)|_{z=\sg a_{N}},
       \end{align}
       where all the items above are $2\times2$ matrices, and we denote by $H_x$ and $H_{xy}$
       the derivative of $H$ with respect to $x$, and the second derivative of $H$ with respect to
       $x$ and then $y$ respectively for $x$ and $y\in\R$.
       
       Now evaluating the corresponding functions at the special solution
       $({0,\sg},{\sg,0})^T\in\R^{4}$
       of \eqref{sigma-solution} with $z=(\sg,0)^T$, and summing them up, we
       obtain
       \begin{eqnarray}
       \frac{\partial^2}{\partial z^2}\left(\sum_{i=1}^{N-1}{m_i\over|\sg a_i-\bar{z}|}\right)\Bigg|_{z=\sigma a_N}
       &=&\left[\left(-\sum_{i=1}^{N-1}{m_i\over|\sigma a_i-z|^3}\right)I
          +3\sum_{i=1}^{N-1} m_i\frac{(\sg a_i-z)(\sg a_i-z)^T}{|\sigma a_i-z|^5}\right]\Bigg|_{z=\sigma a_N}
       \nonumber\\
       &=&-{1\over\sg^3}\left(\sum_{i=1}^{N-1}{m_i\over|a_i-a_{N}|^3}\right)I
       +{3\over\sg^3}\sum_{i=1}^{N-1} m_i\frac{(a_i-a_{N})(a_i-a_{N})^T}{|a_i-a_{N}|^5}.
       \label{U_zz}
       \end{eqnarray}
       Hence, we have
       \begin{eqnarray}
       H_{zz}&=&I_2-{r\over p}I_2-{r\over p}{\sg^3\over\mu}
                  \left[-{1\over\sg^3}\left(\sum_{i=1}^{N-1}{m_i\over|a_i-a_{N}|^3}\right)I
                  +{3\over\sg^3}\sum_{i=1}^{N-1} m_i\frac{(a_i-a_{N})(a_i-a_{N})^T}{|a_i-a_{N}|^5}\right]
                  \nn\\
             &=&I_2-{r\over p}\left[I_2-{1\over\mu}\left(\sum_{i=1}^{N-1}{m_i\over|a_i-a_{N}|^3}\right)I
             +{3\over\mu}\sum_{i=1}^{N-1} m_i\frac{(a_i-a_{N})(a_i-a_{N})^T}{|a_i-a_{N}|^5}\right],
       \end{eqnarray}
       where the last equality of the first formula follows from the definition of $\mu$ in \eqref{eq.of.cc}.
       Thus the proof is complete. 
\end{proof}

\setcounter{equation}{0}
\section{When the primaries form an Euler-Moulton central configuration}\label{sec:3}

We now consider the linear stability of special relative equilibrium in the $N$ body problem with one small mass away from the line of the $N-1$ primaries which form an Euler-Moulton central configuration. 
A typical example is the ERE orbit of the restricted
$4$-bodies,  involving  the Sun, the Earth, the Moon and one space station. Based on the analytical results, we numerically compute a special case where \( m_1 = m_3 \) for \( N = 4 \).
 in Section 3.2.

\subsection{Analytical results on the linear stability}
\begin{proposition}\label{prop.sum.lm3.lm4}
  Suppose that $a_i$ for $1\leq i \leq N-1$ forms an Euler collinear configuration and $a_i$ for $1 \leq i \leq N$ span $\R^2$. The eigenvalues $\lm_3$ and $\lm_4$ of $D$ defined by \eqref{matrix.D} are both positive and satisfy
  \begin{align}
    \lm_3 + \lm_4 = 3.
  \end{align}
\end{proposition}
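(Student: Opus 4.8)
The plan is to reduce both assertions to a single scalar identity, namely
\begin{equation}
\sum_{i=1}^{N-1}\frac{m_i}{|a_i-a_N|^3}=\mu,
\end{equation}
after which the two claims about $\lm_3,\lm_4$ follow from elementary linear algebra applied to $D$. First I would record the relative–equilibrium equation for the massless body. Because $\zeta_0=(0,\sg,\sg,0)^T$ is a genuine solution, the condition $H_z=0$ at $\zeta_0$ (evaluated exactly as in the proof of Theorem \ref{linearized.Hamiltonian}) is equivalent to
$\sum_{i=1}^{N-1}\frac{m_i(a_i-a_N)}{|a_i-a_N|^3}=-\mu a_N$. Abbreviating $b_i=a_i-a_N$ and splitting the sum, this reads
\begin{equation}
\sum_{i=1}^{N-1}\frac{m_i}{|b_i|^3}\,a_i=\Big(\sum_{i=1}^{N-1}\frac{m_i}{|b_i|^3}-\mu\Big)a_N.
\end{equation}

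The crucial step is geometric. Summing \eqref{eq.of.cc} against $m_i$ and using the antisymmetry of the summand shows that the primaries' center of mass vanishes, $\sum_{i=1}^{N-1}m_i a_i=0$; hence the line $L$ carrying the collinear primaries passes through the origin and is a one–dimensional linear subspace. Consequently the left-hand vector above, being a linear combination of the $a_i\in L$, again lies in $L$, whereas the right-hand vector is a scalar multiple of $a_N$, which by the spanning hypothesis is a nonzero vector \emph{not} lying on $L$. A nonzero vector cannot be parallel both to $L$ and to $a_N\notin L$, so both sides of the displayed equation must vanish; in particular the scalar coefficient is zero, which gives the desired identity $\sum_i m_i/|b_i|^3=\mu$.

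Substituting this identity into \eqref{matrix.D} makes the middle term equal to $-I_2$, cancelling the leading $I_2$ and collapsing $D$ to a single positive combination of rank-one matrices,
\begin{equation}
D=\frac{3}{\mu}\sum_{i=1}^{N-1} m_i\,\frac{b_i b_i^T}{|b_i|^5}.
\end{equation}
Taking the trace and using $\mathrm{tr}(b_i b_i^T)=|b_i|^2$ together with the identity once more gives $\mathrm{tr}\,D=\frac{3}{\mu}\sum_i m_i/|b_i|^3=3$, i.e. $\lm_3+\lm_4=3$. For positivity I would note that each $b_i b_i^T$ is positive semidefinite and each coefficient $3m_i/(\mu|b_i|^5)$ is strictly positive, so $D$ is positive semidefinite; it is in fact positive definite provided the vectors $\{b_i\}$ span $\R^2$. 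This holds because for two distinct primaries $a_i\neq a_j$ on $L$ the vectors $b_i,b_j$ could be parallel only if $a_N,a_i,a_j$ were collinear, which is impossible since the line through $a_i,a_j$ is $L$ and $a_N\notin L$. Hence both eigenvalues are strictly positive.

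The only delicate point is the geometric argument in the second paragraph: isolating the component of the force balance perpendicular to $L$ is precisely what produces the scalar identity, and everything afterwards is bookkeeping. I therefore expect the main obstacle to be stating the collinearity/spanning facts cleanly — that $L$ is genuinely one–dimensional, passes through the origin, and does not contain $a_N$ — rather than any hard computation.
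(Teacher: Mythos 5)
Your proof is correct and follows essentially the same route as the paper: the key identity $\mu=\sum_{i=1}^{N-1}m_i/|a_i-a_N|^3$ is extracted from the central-configuration (force-balance) equation for the massless body by isolating the component transverse to the line of primaries --- the paper does exactly this by reading off the $y$-component --- and the trace of $D$ then gives $\lm_3+\lm_4=3$. The only divergence is the positivity step: the paper applies Cauchy's inequality to conclude $\det D\ge 0$, whereas you note that $D$ collapses to a positive combination of rank-one matrices $(a_i-a_N)(a_i-a_N)^T$ whose directions span $\R^2$; your version is marginally cleaner, since it yields strict positive definiteness directly, while the paper's inequality gives only $\lm_3,\lm_4\ge 0$ unless one also checks the equality case.
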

\begin{proof}
By \eqref{matrix.D} and direct computations, we have that 
\begin{align}\lb{eqn:sum.l3.l4}
  \lambda_3+\lambda_4 =tr(D)
  =2+{1\over\mu}\sum_{i=1}^{N-1}{m_i\over|a_i-a_N|^3} 
  =3+\beta_{2,0},
  \end{align}
where $\beta_{2,0}={1\over\mu}\sum_{i=1}^{N-1}{m_i\over|a_i-a_N|^3}-1.$
Moreover, we have
\begin{align}
  D-{3+\beta_{2,0}\over2}I_2
&=-{3\over2\mu}\sum_{i=1}^{N-1}{m_i\over|a_i-a_N|^3}I_2
 +{3\over\mu}\sum_{i=1}^{N-1}{m_i(a_i-a_N)(a_i-a_N)^T\over|a_i-a_N|^5}
 \\
&={3\over2\mu}\left[\sum_{i=1}^{N-1}m_i{2(a_i-a_N)(a_i-a_N)^T-|a_i-a_N|^2\over|a_i-a_N|^5}\right]\\
&=\psi(\beta_{22,0}),
\end{align}
where
$\beta_{22,0}={3\over2\mu}\sum_{i=1}^{N-1}m_i{(z_{a_i}-z_{a_N})^2\over|a_i-a_N|^5}$
and $\psi(z) = (\begin{smallmatrix}
  x & y\\-y & x
\end{smallmatrix})$ with $z = x+ \sqrt{-1} y$.
Therefore, following direct computation,
we have
\begin{align}\lb{eqn:l3l4}
  \lambda_3,\lambda_4={3+\beta_{22,0}\over2}\pm|\beta_{22,0}|.
\end{align}

Now suppose $a_1,a_2,\ldots,a_{N-1}$ form an Euler-Moulton collinear configuration located on the $x$-axis in $\R^2$. 
We set $a_i=(a_{ix},a_{iy})$ for $i=1,2,\ldots,N$,
and hence $a_{iy}=0$ for $i=1,2,\ldots,N-1$.
Then the central configuration equation \eqref{eq.of.cc} of $m_N$ gives
$
\sum_{j=1}^{N-1}{m_j(a_j-a_N)\over|a_j-a_N|^3}=-\mu a_N.
$
Especially, for $a_{Ny}$, we have that 
\begin{equation}
\sum_{j=1}^{N-1}{m_ja_{Ny}\over|a_j-a_N|^3}=\mu a_{Ny}.
\end{equation}
Since $a_{Ny}\ne0$, it follows that 
\begin{equation}\label{second.formula.of.mu}
\mu=\sum_{j=1}^{N-1}{m_j\over|a_j-a_N|^3},
\end{equation}
and hence $\beta_{2,0}=0$. By \eqref{eqn:l3l4}, we have
$\lambda_3+\lambda_4=3$
and
\begin{align}
D={3\over\mu}\sum_{j=1}^{N-1}{m_j(a_j-a_N)(a_j-a_N)^T\over|a_j-a_N|^5}
={3\over\mu}
\begin{pmatrix}
       \sum_{i=1}^{N-1}{m_ix_{Ni}^2\over r_{Ni}^5} &
	\sum_{i=1}^{N-1}{m_ix_{Ni}y_{Ni}\over r_{Ni}^5}\\
	\sum_{i=1}^{N-1}{m_ix_{Ni}y_{Ni}\over r_{Ni}^5} &
	\sum_{i=1}^{N-1}{m_iy_{Ni}^2\over r_{Ni}^5} 
\end{pmatrix},\lb{eqn:D}
\end{align}
where $(x_{Ni},y_{Ni})=a_N-a_i$ and $r_{Ni}=|a_N-a_i|$.
Using Cauchy's inequality, we have
\begin{equation}
\left(\sum_{i=1}^{N-1}{m_ix_{Ni}y_{Ni}\over r_{Ni}^5}\right)^2
=\left(\sum_{i=1}^{N-1}\sqrt{m_ix_{Ni}^2\over r_{Ni}^5}\sqrt{m_iy_{Ni}^2\over r_{Ni}^5}\right)^2
\le\left(\sum_{i=1}^{N-1}{m_ix_{Ni}^2\over r_{Ni}^5}\right)
   \left(\sum_{i=1}^{N-1}{m_iy_{Ni}^2\over r_{Ni}^5}\right).
\end{equation}
Then we have $\det(D)\ge0$ and hence $\lambda_3,\lambda_4 \geq 0$. 
\end{proof}

\begin{remark}
Note that for $N=4$,  $\beta_{2,0}$ and $\beta_{22,0}$ coincide with
those of (2.10) and (A.14) in \cite{LiZ}.
But the methods used in \cite{LiZ} can not be used
here, even for $N=4$.
\end{remark}

Without loss of the generality, we may assume that $\lambda_3 \geq \lambda_4$ in the following.
Since $\lambda_3 + \lambda_4 = 3$ and $\lambda_3, \lambda_4 > 0$, we can denote $\lambda_3$ and $\lambda_4$ by variable $\beta$ as follows: 
\begin{align}
  \lambda_3 = \frac{3 + \sqrt{9 -\beta}}{2}, \quad \lambda_4 = \frac{3 - \sqrt{9 -\beta}}{2}.
\end{align}

According to (2.17) and (2.18) of \cite{HLS}, the essential part $\gamma=\gamma_{\beta, e}(t)$ of the fundamental solution of the elliptic Lagrangian solution satisfies
\begin{align}
\begin{cases}
&\dot{\gamma}(t)=J B(t) \gamma(t), \\
&\gamma(0)=I_{4},
\end{cases}\lb{eqn:lagga}
\end{align}
with
$B(t)=\left(\begin{smallmatrix}
I & -J \\
J & I_2 - \frac{K_{\beta}}{1+e\cos t}
\end{smallmatrix}\right),
$
where $e$ is the eccentricity and $K_{\beta}=\diag\{\frac{3+\sqrt{9-\beta} }{2}, \frac{3-\sqrt{9-\beta}}{2}\}$.
 For $(\beta, e) \in[0,9) \times[0,1)$, the second order differential operator corresponding to (\ref{eqn:lagga}) is given by
\begin{align}\lb{eqn:ca}
  \cA(\bb, e) &= -\frac{\d^2}{\d t^2}I_2  -I_2 + \frac{1}{1+e\cos t}R(t)K_{\bb}R(t)^T \nn \\
  &= -\frac{\d^2}{\d t^2}I_2  -I_2 + \frac{1}{2(1+e\cos t)}(
  (\lambda_3 + \lambda_4)I_2 + (\lambda_3 - \lambda_4) S(t))\\
  &= -\frac{\d^2}{\d t^2}I_2  -I_2 + \frac{1}{2(1+e\cos t)}(
       3I_2 + \sqrt{9 - \beta} S(t)),  \lb{cA}
\end{align}
where $R(t) = (\begin{smallmatrix}\cos t & -\sin t\\ \sin t & \cos t \end{smallmatrix})$,
$S(t) = (\begin{smallmatrix}
\cos 2t & \sin 2t\\ \sin 2t & -\cos 2t
\end{smallmatrix})$, defined on the domain
$$
\ol{D}(\om,2\pi)=\{y\in W^{2,2}([0,2\pi],\C^n)\;
|\;y(2\pi)=\om y(0),\dot{y}(2\pi)=\om\dot{y}(0)\}.
$$ 
Then it is self-adjoint and depends on the parameters $\beta$ and $e$. 
Therefore, the linear stability of EREs of such a  restricted 4-body problem can be directly derived from the linear stability of the elliptic Lagrangian solutions in \cite{HLS} in terms of $\beta$ and $e$.
Consequently, Theorem \ref{thm:RE.norm.form}
immediately follows from Theorem 1.2 of \cite{HLS}.
Readers may refer to \cite{HLS} for detailed discussions. 




\subsection{Numerical results on the linear stability when 
in the symmetric case of four-body problem}
\label{subsec:3.2}
The dependence between the mass parameter $\beta$ and the masses $m=(m_1,m_2,\ldots,m_{N-1})$
is quite intricate.
In this section, to shed light on how the linear stability varies with specific mass distributions, we will conduct some numerical computations.

It is well-known that the linear stability of an elliptic Euler solution
of the $3$-body problem with masses $m=(m_1,m_2,m_3)\in (\R^+)^3$ is determined by the
eccentricity $e\in [0,1)$ and the mass parameter
\begin{align}\lb{1.4}
  \bb = \frac{m_1(3x^2+3x+1)+m_3x^2(x^2+3x+3)}{x^2+m_2[(x+1)^2(x^2+1)-x^2]},
\end{align}
where $x$ is the unique positive solution of the Euler quintic polynomial equation
\be (m_3+m_2)x^5+(3m_3+2m_2)x^4+(3m_3+m_2)x^3-(3m_1+m_2)x^2-(3m_1+2m_2)x-(m_1+m_2)=0, \lb{Euler.quintic.polynomial}
\ee
and the three bodies form a central configuration of $m$, which are denoted by $q_1=0$,
$q_2=(x\alpha,0)^T$ and $q_3=((1+x)\alpha,0)^T$ with $\alpha=|q_2-q_3|>0$ and $x\alpha=|q_1-q_2|$ (\cite{Zhou2017}).

Assuming further that $m_1 = m_3$, we have that $x=1$ is the unique positive root of \eqref{Euler.quintic.polynomial}.
Given the center of mass of the three primaries is the origin, the position $a_i$ can be expressed in terms of in $m_2$ as follows.
\begin{align}
a_1 = (-(1-m_2)^{-{1\over2}},0)^T,\quad 
a_2 = (0,0)^T,\quad 
a_3 = ((1-m_2)^{-{1\over2}},0)^T.
\end{align}
Since the center of mass is the origin,
we suppose $a_4=(0,y(1-m_2)^{-{1\over2}})^T$ for some $y\in\R^+$.
By \eqref{second.formula.of.mu}, we have
\begin{equation}\label{y}
{1-m_2\over(y^2+1)^{3\over2}} + {m_2\over y^3}={1+7m_2\over8}.
\end{equation}
Note that when $m_2=0$, \eqref{y} gives $y=\sqrt{3}$;
when $m_2=1$, \eqref{y} gives $y=1$.
If $0<y<1$, we have
\begin{equation}
{1-m_2\over(y^2+1)^{3\over2}} + {m_2\over y^3}\ge{1-m_2\over\sqrt{8}}+m_2\ge{1-m_2\over8}+m_2={1+7m_2\over8},
\end{equation}
where the equality holds if and only if $m_2=1$.
Hence we must have $y\ge1$.
Moreover, we take the derivative of $y$ and obtain that 
\begin{equation}\label{y.prime}
{\d y\over \d m_2}
=-\frac{{7\over8}+{1\over(y^2+1)^{3\over2}}-{1\over y^3}}
       {3y\Big[{1-m_2\over(y^2+1)^{5\over2}} + {m_2\over y^5}\Big]}
       <0,
\end{equation}
where $m_2\in[0,1)$.
Here we use
$$
{1\over y^3}-{1\over(y^2+1)^{3\over2}}<{1\over y^3}-{1\over (y+1)^3}
={3y^2+3y+1\over y^3(y+1)^3}={3\over y^2(y+1)^2}+{1\over y^3(y+1)^3}
<{3\over4}+{1\over8}={7\over8}.
$$
Therefore, the range of $y$ is $[1,\sqrt{3}]$,
and $y$ is strictly decreasing with respect to $m_2\in[0,1]$.
By direct computations,  we can reduce $D$, defined in \eqref{eqn:D}, to 
\begin{equation}
D=3\left(\begin{matrix}
{8(1-m_2)\over(1+7m_2)(y^2+1)^{5\over2}} & 0\\
0 & 1 - {8(1-m_2)\over(1+7m_2)(y^2+1)^{5\over2}}
\end{matrix}\right).
\end{equation}
Let
\begin{equation}\label{z}
z = {8(1-m_2)\over(1+7m_2)(y^2+1)^{5\over2}}.
\end{equation}
We then have $\lambda_3=3(1-z)$, $\lambda_4=3z$ and $\det D=9z(1-z)$. It follows that 
\begin{equation}\label{beta}
\beta = 9-(\lambda_3-\lambda_4)^2=36z(1-z).
\end{equation}

If $m_2=0$, then $y=\sqrt{3}$, hence \eqref{z} gives $z={1\over4}$,
and \eqref{beta} gives $\beta=9$;
if $m_2=1$, \eqref{z} gives $z=0$,
and \eqref{beta} gives $\beta=0$.
According to Theorem \ref{thm:RE.norm.form}
(also refer to Theorem 1.2 of \cite{HLS}), 
if $0\le\beta<1$ and $e=0$, the ERE is linearly stable.
We now have the relation of $\beta$ and $m_2$, and can determine  the stability of the ERE in terms of $m_2$,  and then
Theorem \ref{thm:numerically} holds.
Furthermore, by numerical computations,
we have the critical value $m^*\approx0.854$,
see Figure \ref{fig:1}(b).

\setcounter{equation}{0}
\section{When the primaries form a regular polygon central configuration}\label{sec:4}

We now consider the linear stability of special relative equilibrium in the $N-$ body problem with one small mass when the $N-1$ primaries form a $(1+n)$-gon central configuration. 

\subsection{Analytical results on the linear stability}

Suppose that $a_i$ for $1\leq i\leq N-1$ form a  $(1+n)$-gon configuration. 
By Proposition \ref{polygon.prop.lm3.lm4},
the eigenvalues $\lm_3$ and $\lm_4$ of $D$ defined by \eqref{matrix.D} satisfy
\begin{align}
	\lm_3 = 1+{A\over\omega^2}+{B\over\omega^2},\quad
	\lm_4 = 1+{A\over\omega^2}-{B\over\omega^2}={l_3(x,\theta)\over\omega^2},
\end{align}
where  $\omega^2,A,B$ and $l_3(x,\theta)$ are given by p.305 and p.311 of \cite{BaE}.

According to (2.17) and (2.18) of \cite{HLS}, the essential part $\gamma=\gamma_{m, e}(t)$ of the fundamental solution of the elliptic Lagrangian solution satisfies
\begin{equation}\label{eqn:ga.n-gon}
\begin{cases}
&\dot{\gamma}(t)=J B(t) \gamma(t), \\
&\gamma(0)=I_{4},
\end{cases}
\end{equation}
with
$B(t)=\left(\begin{smallmatrix}
I & -J \\
J & I_2 - \frac{K_{m}}{1+e\cos t}
\end{smallmatrix}\right),
$
where $e$ is the eccentricity and $K_{m}=\diag\{\lambda_3, \lambda_4\}$, the subscript "$m$" denote the dependence with respect to the masses.
 For $(\beta, e) \in[0,9) \times[0,1)$, the second order differential operator corresponding to \eqref{eqn:ga.n-gon} is given by
\begin{align}\lb{eqn:ca.n-gon}
  \cA(m, e) &= -\frac{\d^2}{\d t^2}I_2  -I_2 + \frac{1}{1+e\cos t}R(t)K_mR(t)^T \nn \\
  &= -\frac{\d^2}{\d t^2}I_2  -I_2 + \frac{1}{1+e\cos t}\left[
  \frac{\lambda_3 + \lambda_4}{2}I_2 
  + \frac{\lambda_3 - \lambda_4}{2} S(t)\right], 
\end{align}
defined on the domain $\ol{D}(\om,2\pi)$.
Then it is self-adjoint and depends on the parameters $m$ and $e$. 

We define the $\omega$-Morse index $\phi_{\omega}(\mathcal{A}(m,e))$ to be the total number of negative eigenvalues of $\mathcal{A}(m,e)$,
and define $\nu_{\omega}(\mathcal{A}(m,e))=\dim\ker(\mathcal{A}(m,e))$.
In the Appendix \ref{subsec:5.2}, we provide a brief review of the Maslov-type
$\omega$-index theory for $\omega$ on the unit circle in the complex plane,  following \cite{Lon4}.
In the following, we use notations introduced there.


The next theorem follows from the corresponding property
of the Maslov-type index.

\begin{theorem} \label{criteria of spectral stability}
(See (9.3.3) on p.204 of Long \cite{Lon4} with $\omega_0=-1$  and $k=2$)
The matrix $\gamma(2\pi)$ is spectral stable,
if $|\phi_{1}(\mathcal{A}(m,e))-\phi_{-1}(\mathcal{A}(m,e))|=2$.
The matrix $\gamma(2\pi)$ is hyperbolic,
if $\mathcal{A}(m,e)$ is positive definite
in $\overline{D}(\omega,2\pi)$ for any $\omega\in\U$.

\end{theorem}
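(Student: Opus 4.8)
The plan is to reduce both assertions to the Maslov-type $\omega$-index theory recalled in Appendix~\ref{subsec:5.2} and then to invoke Long's general criterion. The starting point is the Morse-index correspondence between the self-adjoint operator $\mathcal{A}(m,e)$ on $\overline{D}(\omega,2\pi)$ and the symplectic path $\gamma=\gamma_{m,e}$ solving \eqref{eqn:ga.n-gon}, namely
\[
\phi_\omega(\mathcal{A}(m,e))=i_\omega(\gamma_{m,e}),\qquad
\nu_\omega(\mathcal{A}(m,e))=\nu_\omega(\gamma_{m,e})\quad\text{for every }\omega\in\U ,
\]
where $i_\omega$ and $\nu_\omega$ denote the $\omega$-index and $\omega$-nullity of the path. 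This identity, the standard relation between the number of negative eigenvalues of a Sturm--Liouville operator with $\omega$-boundary conditions and the $\omega$-index of the associated Hamiltonian path (see \cite{Lon4}), converts all spectral information about $\gamma(2\pi)$ into statements about $\phi_\omega$ and $\nu_\omega$.

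The hyperbolicity assertion then follows at once. If $\mathcal{A}(m,e)$ is positive definite on $\overline{D}(\omega,2\pi)$ for every $\omega\in\U$, then in particular $\nu_\omega(\mathcal{A}(m,e))=0$ for all such $\omega$. By the correspondence, $\nu_\omega(\gamma)=\dim\ker(\gamma(2\pi)-\omega I_4)=0$ for every $\omega\in\U$, so $\sigma(\gamma(2\pi))\cap\U=\emptyset$; that is, $\gamma(2\pi)$ is hyperbolic.

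For the spectral-stability assertion I would apply Long's criterion (9.3.3) of \cite{Lon4} with $\omega_0=-1$ and $k=2$, after rewriting its hypotheses in the present notation through the correspondence above. The mechanism is as follows: as $\omega=e^{\sqrt{-1}\alpha}$ moves along the upper unit arc from $\omega=1$ to $\omega=-1$, the integer function $\omega\mapsto\phi_\omega(\mathcal{A}(m,e))=i_\omega(\gamma)$ is locally constant off $\sigma(\gamma(2\pi))\cap\U$ and jumps across each such eigenvalue by its splitting number. Since $\gamma(2\pi)\in\mathrm{Sp}(4)$, the extremal value $k=2$ of the index variation, i.e.\ $|\phi_1(\mathcal{A}(m,e))-\phi_{-1}(\mathcal{A}(m,e))|=2$, can be attained only when the entire reciprocal quadruple of eigenvalues of $\gamma(2\pi)$ accumulates on $\U$, leaving no room for an off-circle (hyperbolic or complex-unstable) pair. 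Hence $\sigma(\gamma(2\pi))\subset\U$ and $\gamma(2\pi)$ is spectrally stable.

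The routine half is the hyperbolic statement, which is immediate from the nullity correspondence. The main obstacle lies in the spectral-stability half: one must justify that $|\phi_1-\phi_{-1}|=2$ is the extremal index variation for a path in $\mathrm{Sp}(4)$ and that its saturation excludes off-circle eigenvalues. This is precisely the content of (9.3.3) in \cite{Lon4}, so the essential work is to check that $\gamma_{m,e}$ meets the normalization under which that criterion is stated ($\gamma(0)=I_4$, a path in $\mathrm{Sp}(4)$) and to control the splitting-number contributions at the two endpoints $\omega=1$ and $\omega=-1$.
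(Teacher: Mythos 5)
Your proposal is correct and follows essentially the same route as the paper, which offers no separate proof but simply cites (9.3.3) on p.~204 of Long's book together with the Morse-index/Maslov-index correspondence (Lemma~\ref{Lm:A.12}); your translation $\phi_\omega(\mathcal{A}(m,e))=i_\omega(\gamma)$, $\nu_\omega(\mathcal{A}(m,e))=\nu_\omega(\gamma)$ and the direct nullity argument for hyperbolicity are exactly the intended reading. The only part you defer — that the saturated index variation $|i_1-i_{-1}|=2$ for a path in $\mathrm{Sp}(4)$ excludes off-circle eigenvalue pairs, via the splitting-number bounds for the basic normal forms — is precisely what the cited result of Long supplies, so nothing is missing.
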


Note that, if
$\alpha=\frac{\lambda_3+\lambda_4}{2}-1={A\over\om^2}$ and
$\beta=\frac{\lambda_3-\lambda_4}{2}={|B|\over\om^2}$, $\cA(m,e)$ is the same as the operator
\begin{equation}\label{bar.A}
    \bar\cA(\alpha,\beta,e)= -\frac{\d^2}{\d t^2}I_2  -I_2 +
    r_e(t)[(1+\alpha)I_2+\beta R(t)NR(t)^T],
\end{equation}
defined on p.~1028
of \cite{Hu2020},
where $r_e(t)=\frac{1}{1+e\cos t},R(t)NR(t)^T=S(t)$.   
Hence by Lemma \ref{Lm:A.12},
$$
\phi_{\omega}(\mathcal{A}(m,e))
=\phi_{\omega}(\bar{\mathcal{A}}(\alpha,\beta,e)),\quad
\nu_{\omega}(\mathcal{A}(m,e))
=\nu_{\omega}(\bar{\mathcal{A}}(\alpha,\beta,e)).
$$

\subsection{Linear instability of \texorpdfstring{$S_1$}{S\_1} and \texorpdfstring{$S_2$}{S\_2}}
By Proposition \ref{the eigenvalues of the case S_1 and S_2},
when ${m_0\over M}\to+\infty$,
we have $\alpha\to2$ and $\beta\to6$;
and hence, we have
\begin{eqnarray}
    \lim_{{m_0\over M}\to+\infty}\bar\cA(\alpha,\beta,e)=\bar\cA(2,6,e)= -\frac{\d^2}{\d t^2}I_2  -I_2 +
    r_e(t)[3I_2+6 R(t)NR(t)^T].
\end{eqnarray}

Now we can give:

\begin{proof}[Proof of Theorem \ref{no stability of S_1 and s_2 }]
Note that $\bar\cA(2,6,e)$
coincides with the following operator (see (2.42) on p.1263 of \cite{Zhou2017})
\begin{equation}
    A(\beta,e) = -\frac{\d^2}{\d t^2}I_2  -I_2 +
   {1\over2(1+e\cos{t})}[(3+\beta)I_2+2(1+\beta)R(t)NR(t)^T],
\end{equation}
with $\beta=3$. Furthermore, by Theorem 1.3 and Theorem 1.5(ix) of \cite{Zhou2017}, we can always choose a fixed $e_0$ such that for any $e\leq e_0$, $\gamma(2\pi)\approx R(\theta)\diamond D(2)$ when $\hat{\beta_2}<\beta=3<\hat{\beta_{5\over 2}}$ , that is, $S_1$ and $S_2$ is unstable when ${m_o\over M}$ is sufficiently large and $e\in [0,e_0]$. 
\end{proof}

\subsection{Linear stability of \texorpdfstring{$S_3$}{S\_3}}

\begin{lemma}
    For any $(m_0,e)\in(0,+\infty)\times[0,1)$,
    $\cA(m,e)$ is positive in $\ol{D}(1,2\pi)$.
\end{lemma}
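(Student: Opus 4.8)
The plan is to show that the symmetric form $\langle\cA(m,e)y,y\rangle$ is positive definite on $\ol{D}(1,2\pi)$, i.e.\ that the $1$-Morse index and nullity both vanish, $\phi_{1}(\cA(m,e))=\nu_{1}(\cA(m,e))=0$. The first move is to undo the uniform rotation already built into \eqref{eqn:ca.n-gon}. Writing out the form and substituting $y(t)=R(t)u(t)$ — a change preserving $\ol{D}(1,2\pi)$ since $R(2\pi)=I_{2}$ — and using $\dot R=JR$ together with $R^{T}JR=J$, the potential term loses its $t$-dependence and the form becomes
\begin{equation}
  \tilde Q(u)=\int_{0}^{2\pi}\Big(|\dot u|^{2}-2u^{T}J\dot u+r_{e}(t)\,u^{T}K_{m}u\Big)\,\d t,\qquad K_{m}=\diag\{\lambda_{3},\lambda_{4}\}.
\end{equation}
By Proposition \ref{polygon.prop.lm3.lm4} both eigenvalues are positive at $S_{3}$ for every $m_{0}>0$, and $\lambda_{3}+\lambda_{4}=2+\mu^{-1}\sum m_{i}|a_{i}-a_{N}|^{-3}>2$.

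Next I would complete the square. Using $J^{2}=-I_{2}$ one checks $|\dot u|^{2}-2u^{T}J\dot u=|\dot u+Ju|^{2}-|u|^{2}$, so that with $L=\tfrac{\d}{\d t}+J$ one has $\tilde Q(u)=\|Lu\|_{L^{2}}^{2}+\int_{0}^{2\pi}\big(r_{e}u^{T}K_{m}u-|u|^{2}\big)\,\d t$. The first term is nonnegative and $\ker L=\{R(-t)u_{0}:u_{0}\in\R^{2}\}$ is two–dimensional. A Fourier computation gives the spectral gap $\|Lu\|_{L^{2}}^{2}\ge\|u\|_{L^{2}}^{2}$ on $(\ker L)^{\perp}$, and since $r_{e}K_{m}-I_{2}\ge(\tfrac{\lambda_{4}}{1+e}-1)I_{2}$ this yields $\tilde Q(u)\ge\tfrac{\lambda_{4}}{1+e}\|u\|_{L^{2}}^{2}>0$ for $u\in(\ker L)^{\perp}$. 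On $\ker L$ the form collapses to $u_{0}^{T}M(e)u_{0}$ with $M(e)=\int_{0}^{2\pi}r_{e}R(t)K_{m}R(t)^{T}\,\d t-2\pi I_{2}$; evaluating $\int_{0}^{2\pi}r_{e}\,\d t=2\pi(1-e^{2})^{-1/2}$ and $\int_{0}^{2\pi}r_{e}\cos 2t\,\d t=2\pi(1-e^{2})^{-1/2}\tfrac{1-\sqrt{1-e^{2}}}{1+\sqrt{1-e^{2}}}$, the two eigenvalues of $M(e)$ turn out to be positive for all $e\in[0,1)$ by a short case analysis using only $\lambda_{4}>0$ and $\lambda_{3}+\lambda_{4}>2$. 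At $e=0$ the whole operator $\tilde\cA=-\tfrac{\d^{2}}{\d t^{2}}I_{2}-2J\tfrac{\d}{\d t}+K_{m}$ is constant coefficient, its $k$-th Fourier block has eigenvalues $k^{2}+\tfrac{\lambda_{3}+\lambda_{4}}{2}\pm\sqrt{(\tfrac{\lambda_{3}-\lambda_{4}}{2})^{2}+4k^{2}}$, and positivity reduces to $\lambda_{4}>0$ (mode $k=0$) and $(1+\lambda_{3})(1+\lambda_{4})>4$ (modes $k=\pm1$); these hold for $S_{3}$ by Proposition \ref{polygon.prop.lm3.lm4}, so $\phi_{1}=\nu_{1}=0$ when $e=0$.

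The step I expect to be the main obstacle is gluing the two pieces for $e\in(0,1)$: for $u=u^{\parallel}+u^{\perp}$ the kinetic cross term vanishes, but the potential contributes $2\int_{0}^{2\pi}r_{e}\,(u^{\parallel})^{T}K_{m}u^{\perp}\,\d t$, and the crude Cauchy–Schwarz bound grows like $(1-e^{2})^{-3/4}$, which a direct AM–GM comparison against $\tilde Q(u^{\parallel})$ and $\tilde Q(u^{\perp})$ fails to absorb as $e\to1$. I would close this either (i) by a frequency-localized estimate, exploiting that a putative kernel element must be nearly parallel to $\ker L$ (the gap forces $\|u^{\perp}\|^{2}\le\tfrac{(1+e)-\lambda_{4}}{\lambda_{4}}\|u^{\parallel}\|^{2}$) so that the strict positivity of $M(e)$ can be leveraged; or — more economically — by invoking the index identity $\phi_{\omega}(\cA(m,e))=\phi_{\omega}(\bar\cA(\alpha,\beta,e))$ from Lemma \ref{Lm:A.12} and the $\omega=1$ positivity already recorded for $\bar\cA(\alpha,\beta,e)$ in \cite{Hu2020}, after locating the $S_{3}$ parameters $\alpha=\tfrac{\lambda_{3}+\lambda_{4}}{2}-1$, $\beta=\tfrac{\lambda_{3}-\lambda_{4}}{2}$ via Proposition \ref{polygon.prop.lm3.lm4}. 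Either way, once $\phi_{1}=0$ is secured this feeds, together with the complementary count $\phi_{-1}=2$, into Theorem \ref{criteria of spectral stability} to give the spectral stability of $S_{3}$.
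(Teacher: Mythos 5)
Your reduction is sound as far as it goes: the rotation $y=R(t)u$, the completion of the square with $L=\frac{\d}{\d t}+J$, the spectral gap on $(\ker L)^{\perp}$, the positivity of $M(e)$ on $\ker L$, and the constant-coefficient Fourier analysis at $e=0$ (where positivity reduces to $\lambda_4>0$ and $(1+\lambda_3)(1+\lambda_4)>4$, both of which follow from $\alpha>\frac12$ and $\alpha+1>\beta$) all check out. But the lemma is a statement for every $e\in[0,1)$, and the step you yourself flag as the main obstacle --- absorbing the cross term $2\int_0^{2\pi}r_e\,(u^{\parallel})^TK_mu^{\perp}\,\d t$ --- is exactly where the proof has to live: positivity on $\ker L$ and on $(\ker L)^{\perp}$ separately does not give positivity of the form, and you concede that the direct estimate degenerates as $e\to1$. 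Neither of your two proposed remedies is actually carried out, so as written this is a gap, not a proof.

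Your remedy (ii) points toward the paper's actual argument, but it is stated too loosely to close the gap. What Remark 3.9 of \cite{Hu2020} records is positivity of $\bar\cA(\frac12,\beta,e)$ for $\beta\in[0,\frac32)$ together with $\nu_1(\bar\cA(\frac12,\frac32,e))=0$, not positivity of $\bar\cA(\alpha,\beta,e)$ at the $S_3$ parameters. Transferring it requires the comparison argument the paper uses: Proposition \ref{A.3} supplies $\alpha>\frac12$ and $\alpha+1>\beta$; when $\beta<\frac32$ one has $\bar\cA(\alpha,\beta,e)>\bar\cA(\frac12,\beta,e)>0$ since the difference is $r_e(t)(\alpha-\frac12)I_2>0$; and when $\beta\ge\frac32$ one writes
$$\bar\cA(\alpha,\beta,e)=\bar\cA(\tfrac12,\tfrac32,e)+r_e(t)\left(\beta-\tfrac32\right)\left(I_2+S(t)\right)+r_e(t)(\alpha-\beta+1)I_2,$$
which is positive because $I_2+S(t)\ge0$ and $\alpha-\beta+1>0$. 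The second case is not vacuous: the lemma claims positivity for every $m_0>0$, not only in the regime $m_0/M$ large where $\beta<\frac32$, so the case split and the inequality $2A-\omega^2>0$ from Proposition \ref{A.3} are genuinely needed. (Also, the identity $\phi_\omega(\cA(m,e))=\phi_\omega(\bar\cA(\alpha,\beta,e))$ is simply the observation that the two operators coincide under $\alpha=\frac{\lambda_3+\lambda_4}{2}-1$, $\beta=\frac{\lambda_3-\lambda_4}{2}$; Lemma \ref{Lm:A.12} is the Morse-index/Maslov-index correspondence and is not the statement you need at this point.)
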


In fact, from Remark 3.9 of \cite{Hu2020}, we have
\begin{equation}
    \phi_{1}(\bar\cA({1\over2},\beta,e))=0,
    \quad
    \nu_{1}(\bar\cA({1\over2},{3\over2},e))=0,
    \quad
    \forall (\beta,e)\in[0,3/2)\times[0,1),
     \end{equation}
    \begin{equation}
      \phi_{-1}(\bar\cA({1\over2},{3\over2},e))=2,
    \quad
    \nu_{-1}(\bar\cA({1\over2},{3\over2},e))=0,
    \quad
    \forall \beta\in (\beta_m(e),{3\over2}), \, e\in[0,1),
\end{equation}
here $\beta_m(e)$ is an analytic curve in $e$. 

By Proposition \ref{A.3},
we have $\alpha>{1\over2}$ and
$\alpha+1>\beta$;
and hence, if $\beta<{3\over2}$,
we have
$\bar\cA(\alpha,\beta,e)>\bar\cA({1\over2},\beta,e)>0$ in $\ol{D}(1,2\pi)$.
Moreover, if $\beta\ge{3\over2}$,
we can deduce the following formula:
\begin{align}
    \bar \cA(\alpha,\beta,e)&=-{d^2\over dt^2}I_2-I_2+{1\over1+e\cos{t}}\big[(1+\alpha)I_2+\beta S(t)\big]\\
    &=-{d^2\over dt^2}I_2-I_2+{1\over1+e\cos{t}}\big[{3\over 2}I_2+{3\over 2}S(t)\big]\\
    &\quad +{1\over1+e\cos{t}}\big[(\alpha-{1\over2})I_2+(\beta -{3 \over 2})S(t)\big]\\
    &=\bar\cA({1\over2},{3\over2},e)+{1\over1+e\cos{t}}(\beta-{3\over 2})(I_2+S(t))+{1\over1+e\cos{t}}(\alpha-\beta +1)I_2\\
    &\quad>0,
\end{align}
when $\beta\ge{3\over 2}$ and $\alpha-\beta+1>0.$ 
Hence we have
\begin{lemma}
    (i) We have
    \begin{equation}
    \bar\cA(\alpha,\beta,e)>0
    \;\;
    {\rm in}\;\ol{D}(1,2\pi),
    \quad
    \forall {m_0\over M}\in(0,+\infty),e\in[0,1).
    \end{equation}
    
    (ii) There exists $m^*(e)$ depending on $e$ such that
    \begin{equation}
    \phi_{-1}(\cA(m,e))=2,
    \quad
    \nu_{-1}(\cA(m,e))=0,
    \quad
    \forall {m_0\over M}\in[m^*(e),+\infty),e\in[0,1).
    \end{equation}
\end{lemma}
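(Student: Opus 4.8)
The statement splits into the positivity claim (i), which the computation just given essentially settles, and the index count (ii), which is the real content; my plan is to handle them separately, in both cases reducing to the model operator $\bar\cA(\tfrac{1}{2},\tfrac{3}{2},e)$ whose $\om$-indices are supplied by Remark~3.9 of \cite{Hu2020}, together with the location of the parameter pair $(\alpha,\beta)$ attached to $S_3$.

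For part (i) I would just record the two-case comparison above. By Proposition~\ref{A.3} one has $\alpha>\tfrac{1}{2}$ and $\alpha+1>\beta$ for every $m_0/M$. If $\beta<\tfrac{3}{2}$, then $\bar\cA(\alpha,\beta,e)=\bar\cA(\tfrac{1}{2},\beta,e)+\frac{1}{1+e\cos t}(\alpha-\tfrac{1}{2})I_2$ is the sum of a positive operator (Remark~3.9 of \cite{Hu2020}) and a nonnegative form, hence positive in $\ol{D}(1,2\pi)$; if $\beta\ge\tfrac{3}{2}$, the splitting $\bar\cA(\alpha,\beta,e)=\bar\cA(\tfrac{1}{2},\tfrac{3}{2},e)+\frac{1}{1+e\cos t}(\beta-\tfrac{3}{2})(I_2+S(t))+\frac{1}{1+e\cos t}(\alpha-\beta+1)I_2$ exhibits it as a positive operator plus two nonnegative forms, using $I_2+S(t)\ge0$, $\beta-\tfrac{3}{2}\ge0$ and $\alpha-\beta+1>0$; here positivity of $\bar\cA(\tfrac{1}{2},\tfrac{3}{2},e)$ follows from $\phi_1=0$ for $\beta<\tfrac{3}{2}$ together with $\nu_1(\bar\cA(\tfrac{1}{2},\tfrac{3}{2},e))=0$. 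This gives $\phi_1=\nu_1=0$ for all $m_0/M\in(0,+\infty)$, $e\in[0,1)$.

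For part (ii) the idea is to push $(\alpha,\beta)$ to the distinguished corner $(\tfrac{1}{2},\tfrac{3}{2})$ of the admissible region $\{\alpha>\tfrac{1}{2},\ \alpha+1>\beta\}$. First I would extract from the expressions $\alpha=A/\om^2$, $\beta=|B|/\om^2$ of \cite{BaE} at $S_3$ that $(\alpha,\beta)\to(\tfrac{1}{2},\tfrac{3}{2})$ as $m_0/M\to+\infty$, the limit being approached from inside that region. Remark~3.9 of \cite{Hu2020} then provides $\phi_{-1}(\bar\cA(\tfrac{1}{2},\tfrac{3}{2},e))=2$ and, decisively, $\nu_{-1}(\bar\cA(\tfrac{1}{2},\tfrac{3}{2},e))=0$. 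Since the $-1$ Morse index can change only across parameters where the $-1$ nullity is nonzero, the vanishing of $\nu_{-1}$ at the corner forces $\phi_{-1}\equiv2$ and $\nu_{-1}\equiv0$ on a whole neighborhood of $(\tfrac{1}{2},\tfrac{3}{2})$; for each fixed $e$ I would define $m^*(e)$ as a threshold past which $(\alpha,\beta)$ enters this neighborhood, yielding $\phi_{-1}(\cA(m,e))=\phi_{-1}(\bar\cA(\alpha,\beta,e))=2$ and $\nu_{-1}(\cA(m,e))=0$ for $m_0/M\ge m^*(e)$.

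The hard part will be the asymptotic step: reading the limit $(\alpha,\beta)\to(\tfrac{1}{2},\tfrac{3}{2})$ off the regular $n$-gon mean-value formulas of \cite{BaE,BaE2}, and controlling the approach precisely enough to produce a genuine $e$-dependent threshold $m^*(e)$ (uniformity presumably failing as $e\to1$, where $r_e(t)=\frac{1}{1+e\cos t}$ degenerates). A secondary subtlety is that the corner sits on the boundary $\beta=\alpha+1$ and at the endpoint $\beta=\tfrac{3}{2}$ of the interval on which \cite{Hu2020} records the index, so one must verify that the neighborhood with $\nu_{-1}=0$ actually captures the one-sided approach $\alpha\downarrow\tfrac{1}{2}$, $\beta\to\tfrac{3}{2}$ with $\beta<\alpha+1$. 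Granting (i) and (ii), Theorem~\ref{criteria of spectral stability} with $\phi_1=0$, $\phi_{-1}=2$ gives $|\phi_1-\phi_{-1}|=2$ and hence spectral stability, and $\nu_{\pm1}=0$ upgrades this to the linear stability of $S_3$ in Theorem~\ref{stability of S_3}.
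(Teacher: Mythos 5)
Your proposal is correct and follows essentially the same route as the paper: part (i) via the same two-case comparison against $\bar\cA(\tfrac{1}{2},\beta,e)$ and the splitting off of $\bar\cA(\tfrac{1}{2},\tfrac{3}{2},e)$ using $\alpha>\tfrac{1}{2}$, $\alpha+1>\beta$ from Proposition \ref{A.3}, and part (ii) via the limit $(\alpha,\beta)\to(\tfrac{1}{2},\tfrac{3}{2})$ as $m_0/M\to+\infty$ combined with the $\pm1$-indices of the model operator from Remark 3.9 of \cite{Hu2020} and the constancy of $\phi_{-1}$ where $\nu_{-1}$ vanishes. The caveats you raise about the one-sided approach to the corner and the $e$-dependence of the threshold are legitimate but are treated no more explicitly in the paper than in your sketch.
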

Now from the above results, we can give 
:
\begin{proof}[Proof of Theorem \ref{stability of S_3}]
Under the assumption of the above lemma, for the fundamental solution $\gamma$ of \eqref{eqn:ga.n-gon}, we have
$$
  \phi_{-1}(\cA({m,e}))=2,\quad
  \nu_{-1}(\cA({m,e}))=0,\quad$$
  $$
  \phi_1(\cA({m,e}))=0,\quad
  \nu_1(\cA(m,e))=0,
$$
According to Theorem \ref{criteria of spectral stability} and the basic normal forms of symplectic matrices, $\gamma(2\pi)$ may have only two possible cases: $\gamma(2\pi) \approx R(\alpha) \diamond R (\beta)$,$\gamma(2\pi) \approx N_2(e^{-1\sqrt{-1}\theta},u)$, for $\alpha,\beta, \theta \in [0,2\pi) $and a real vector $u\in\R^4$. Moreover from (11) in p.207 0f \cite{Lon4} (also refer to the Appendix \ref{subsec:5.2}), we have the iteration formula as following:
$$
i_{-1}(\gamma)=i_1(\gamma)+S^+_{\gamma(2\pi)}(1)+\sum_{0<\theta<\pi}(S^+_{\gamma(2\pi)}(e^{\sqrt{-1}\theta})-S^-_{\gamma(2\pi)}(e^{\sqrt{-1}\theta}))-S^+_{\gamma(2\pi)}(-1)
$$
hence, we obtain
$$
2=\sum_{0<\theta<\pi}(S^+_{\gamma(2\pi)}(e^{\sqrt{-1}\theta})-S^-_{\gamma(2\pi)}(e^{\sqrt{-1}\theta}))
$$
By Lemma \ref{Lm:splitting.numbers},
we deduce $\gamma(2\pi) \approx R(\alpha) \diamond R (\beta)$, that is, $S_3$ is linearly stable.
\end{proof}

\appendix
\setcounter{equation}{0}
\section{Appendix}\label{sec:5}

\subsection{on 
    \texorpdfstring{$\lambda_3$}{lambda\_3} and 
    \texorpdfstring{$\lambda_4$}{lambda\_4}}

To establish the relations between Bang's quantities in \cite{BaE}
and ours, we use the notations below. Note that the first \(N-2\) primary bodies, each with mass \(m\), are placed at the vertices of a regular polygon and rotate rigidly around the \((N-1)\)-th mass \(m_0\) at the center, while the \(N\)-th body is massless, indicating that most of the mass is concentrated at the center.
We suppose
 \begin{center}
 	\begin{tabular}{c c}
 		the positions & the masses\\
 		$a_{1}=\alpha \omega_{1}$                        &$m_1=m$\\
 		$\vdots$                                       &$\vdots$\\
 		$a_{N-2}=\alpha \omega_{N-2}$               &$m_{N-2}=m$\\
 		$a_{N-1}=0$                                 &$m_{N-1}=m_0$\\
 		$a_{N}=\alpha w_{0}$                        &$m_{N}$=0,
 	\end{tabular}
 \end{center}
 where $\omega_i=e^{-\frac{2i\pi j}{N-2}}, j=1, \ldots , N-2$
 and $w_0$ being the affix of one of the points $S_1,S_2$
 or $S_3$.
Here the notations $m_0, m ,M=nm=(N-2)m$ and $w_0$ are used by
D.~Bang and B.Elmabsout in \cite{BaE}. In this case,
 the moment of inertia is
 \begin{displaymath}
 I=\alpha^{2}
 \sum_{i=1}^{N-2}m|\omega_i|^2=1,
 \end{displaymath}
 implying $\alpha =\frac{1}{\sqrt{(N-2)m}}=\frac{1}{\sqrt{M}}$, and 
 the normalization can be written as $$\sum_{i=1}^{N-2}m+m_0=1.$$
 
 Recall the following quantities defined by D.~Bang and B.~Elmabsout in\cite{BaE}:
$$h_n(1)=\frac{1}{4n}\sum_{j=1}^{n-1}\frac{1}{\sin(\frac{j\pi}{n})},$$
$$h_n(x,u)=\frac{1}{n}\sum_{j=1}^{n}\frac{1-x\thinspace cos(\frac{2j\pi}{n}+u)}{(1+x^2-2x\thinspace cos(\frac{2j\pi}{n}+u))^{3/2}}.$$
Here $n=N-2$.
The configuration guarantees the equilibrium equation
$$\alpha^3\omega^2=m_0+Mh_n(1),$$
and the additional equation (2) in \cite{BaE}:
$$\phi(\rho,\theta)=m_0(1-\rho^3)+M(h_n(\frac{1}{\rho},\theta)-h_n(1)),$$
where $M=nm=(N-2)m$.
Moreover,
 \begin{eqnarray}
 \omega^2&=&m_0+Mh_n(1)=m_0+\frac{M}{4n}\sum_{j=1}^{n-1}\frac{1}{\sin(\frac{j\pi}{n})},
 \\
 A&=&\frac{M}{2n}\sum_{j=1}^{n}\frac{1}{|w_0-\omega_j|^3}+\frac{m_0}{2|w_0|^3},
 \\
 B &=& \frac{3M}{2n}\sum_{j=1}^{n}\frac{1}{|w_0-\omega_j|^3}\frac{w_0-\om_j}{\ol{w_0-\om_j}}
 +\frac{3}{2}\frac{m_0}{|w_0|^3}\frac{w_0}{\ol{w_0}},
 \end{eqnarray}
 and
\begin{eqnarray}
l_2(x,\th)&=&\omega^2-A,
\\
l_3(x,\th)&=&\omega^2+A-|B|.
\end{eqnarray}
 
 \par 
 We denote 
 $$\psi(x+iy)=\left(\begin{array}{ll}
 x&y\\
 y&-x
 \end{array}
 \right).
 $$
 It is easy to see that 
 $$
 zz^{T}=\frac{1}{2}|z|^2I_{2}+\frac{1}{2}\psi(z^{2}),
 $$
 where $z=x+iy$ and $zz^{T}$ denotes the product of $2\times 1$ matrix and $1\times 2$ matrix.
 
 Using the notations and formulas above, we obtain

 \begin{proposition}\label{polygon.prop.lm3.lm4}
	Suppose that $a_i$ for $1\leq i \leq N-1$ forms a $(1+n)$-gon configuration. We have
    $$\mu\alpha^3=\omega^2.$$
    Moreover, the eigenvalues $\lm_3$ and $\lm_4$ of $D$ defined by \eqref{matrix.D} satisfy
	\begin{align}
	\lm_3 = 1+{A\over\omega^2}+{B\over\omega^2},\quad
	\lm_4 = 1+{A\over\omega^2}-{B\over\omega^2}={l_3(x,\theta)\over\omega^2}.
	\end{align}
\end{proposition}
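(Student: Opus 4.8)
The plan is to prove the two assertions in turn: first the scalar normalization $\mu\alpha^3=\omega^2$, and then the eigenvalue formulas, by writing the matrix $D$ in complex notation and matching it term-by-term against the Bang--Elmabsout quantities $A$ and $B$.

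For $\mu\alpha^3=\omega^2$, I would apply the central configuration equation \eqref{eq.of.cc} at a single vertex of the polygon, say $a_1=\alpha\omega_1$. Pulling out the factor $\alpha^{-2}$, the force exerted by the remaining $n-1$ vertices is $m\sum_{j\ne1}\frac{\omega_j-\omega_1}{|\omega_j-\omega_1|^3}$. By the rotational symmetry of the regular $n$-gon this sum is a real multiple of $\omega_1$; projecting onto $\overline{\omega_1}$ and using $\mathrm{Re}\big(\overline{\omega_1}(\omega_j-\omega_1)\big)=-\tfrac12|\omega_j-\omega_1|^2$ together with $|\omega_j-\omega_1|=2\sin\big(\tfrac{(j-1)\pi}{n}\big)$ gives $\sum_{j\ne1}\frac{\omega_j-\omega_1}{|\omega_j-\omega_1|^3}=-n\,h_n(1)\,\omega_1$. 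Adding the contribution $-m_0\omega_1$ of the central mass at the origin and comparing the result with $-\mu\alpha^3\omega_1$ yields $\mu\alpha^3=m\,n\,h_n(1)+m_0=Mh_n(1)+m_0=\omega^2$.

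For the eigenvalues I would first reduce $D$ to the normal form already isolated in the proof of Proposition \ref{prop.sum.lm3.lm4}. Writing $z_i$ for the complex affix of $a_i-a_N$ and applying the identity $zz^{T}=\tfrac12|z|^2I_2+\tfrac12\psi(z^2)$ to \eqref{matrix.D}, the matrix becomes $D=(1+\tilde A)I_2+\psi(\tilde B)$ with $\tilde A=\frac{1}{2\mu}\sum_i\frac{m_i}{|z_i|^3}$ and $\tilde B=\frac{3}{2\mu}\sum_i\frac{m_i z_i^2}{|z_i|^5}$. Since $\psi(\tilde B)$ is symmetric and trace-free with eigenvalues $\pm|\tilde B|$, the eigenvalues of $D$ are $1+\tilde A\pm|\tilde B|$. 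It then remains to insert the polygon data $z_i=\alpha(\omega_i-w_0)$ for the $n$ equal masses and $z_{N-1}=-\alpha w_0$ for the central mass: factoring out $\alpha^{-3}$, substituting $m=M/n$, and using $\frac{(w_0-\omega_j)^2}{|w_0-\omega_j|^2}=\frac{w_0-\omega_j}{\overline{w_0-\omega_j}}$, one reads off $\mu\alpha^3\tilde A=A$ and $\mu\alpha^3\tilde B=B$. Combined with $\mu\alpha^3=\omega^2$ from the first step this gives $\tilde A=A/\omega^2$ and $\tilde B=B/\omega^2$, hence $\lambda_3=1+\frac{A}{\omega^2}+\frac{|B|}{\omega^2}$ and $\lambda_4=1+\frac{A}{\omega^2}-\frac{|B|}{\omega^2}=\frac{\omega^2+A-|B|}{\omega^2}=\frac{l_3(x,\theta)}{\omega^2}$, the last equality being the definition of $l_3$ from the appendix.

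The main obstacle will be the bookkeeping in this second step: matching $\tilde A$ and $\tilde B$ to Bang's $A$ and $B$ requires tracking the powers of $\alpha$ released by $|z_i|$ and $z_i^2$, the substitution $m=M/n$, and the constant $\tfrac32$ folded into the definition of $B$, together with the phase identity that converts $z_i^2/|z_i|^5$ into $|w_0-\omega_j|^{-3}\,\frac{w_0-\omega_j}{\overline{w_0-\omega_j}}$. The symmetry evaluation of the polygon force sum in the first step is standard but must be carried out with the correct indexing so that $|\omega_j-\omega_1|=2\sin\big(\tfrac{(j-1)\pi}{n}\big)$ matches the summand of $h_n(1)$. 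Since $w_0$ lies on a symmetry axis of the configuration, $B$ is real there, so $|B|=\pm B$ and the stated formulas follow without further analytic input.
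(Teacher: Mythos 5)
Your proof is correct, and for the main content of the proposition---the eigenvalue formulas---it follows essentially the same route as the paper: decompose $D$ via $zz^{T}=\tfrac12|z|^2I_2+\tfrac12\psi(z^2)$ into $(1+\tilde A)I_2+\psi(\tilde B)$, note that the eigenvalues of such a matrix are $1+\tilde A\pm|\tilde B|$, and match $\mu\alpha^3\tilde A=A$, $\mu\alpha^3\tilde B=B$ against the Bang--Elmabsout quantities; your bookkeeping of the powers of $\alpha$, the substitution $m=M/n$, and the phase factor $z^2/|z|^2=z/\bar z$ all check out, and your remark that $B$ is real on the symmetry axes (so $|B|=\pm B$) correctly resolves the slight imprecision of writing $B$ rather than $|B|$ in the statement. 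The one place you genuinely diverge is the identity $\mu\alpha^3=\omega^2$: you obtain it by evaluating the central configuration equation \eqref{eq.of.cc} at a polygon vertex and projecting onto $\overline{\omega_1}$, whereas the paper computes $\mu=U(a)$ directly as the pairwise sum $\sum_{i<j}m_im_j/|a_i-a_j|$ and uses the normalization to identify $\mu\alpha^3$ with $Mh_n(1)+m_0$. Your route has the small advantage of not depending on the normalization identifying $\mu$ with $U(a)$ (it uses only the defining force-balance relation for $\mu$, exactly as the paper itself does for \eqref{second.formula.of.mu} in the collinear case), while the paper's double-sum evaluation is a more direct appeal to the symmetry of the $n$-gon; both are elementary and correct.
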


 \begin{proof}
First, we have
 
 \begin{displaymath}
 tr(D)=\sum_{i=1}^{N-1}\frac{m_i}{|a_i-a_N|^3}  
 =\frac{1}{\alpha^3}\sum_{i=1}^{N-1}\frac{m}{|\omega_i-\omega_N|^3}+\frac{1}{\alpha^3}\frac{m_0}{|\omega_N|^3}
 =\frac{1}{\alpha^3}\cdot 2A,
 \end{displaymath}
 and
 \begin{eqnarray*}
 	&&
 	\sum_{i=1}^{N-1}m_i\frac{(a_i-a_N)(a_i-a_N)^T}{|a_i-a_N|^5}
 	\\
 	&=&
 	\sum_{i=1}^{N-2}m\frac{\frac{1}{2}\alpha^2|\omega_i-\omega_N|^2I_2+\frac{1}{2}\alpha^2\psi((\omega_i-\omega_N)^2)}
 	{\alpha^5|\omega_i-\omega_ N|^5}+
 	m_0\frac{\frac{1}{2}\alpha^2|\omega_N|^2+\frac{1}{2}\alpha^2\psi(\omega_N^2)}{\alpha^5|\omega_N|^5}
 	\\
 	&=&
 	\sum_{i=1}^{N-2}m(\frac{1}{2\alpha^3|\omega_i-\omega_N|^3}I_2+\frac{\psi((\omega_i-\omega_N)^2)}{2\alpha^3|\omega_i-\omega_N|^5})+
 	m_0(\frac{1}{2\alpha^3|\omega_N|^3}I_2+\frac{\psi((\omega_N)^2)}{2\alpha^3|\omega_N|^5})\\
 	&=&
 	[\sum_{i=1}^{N-2}\frac{m}{2\alpha^3|\omega_i-\omega_j|^3}I_2+\frac{m_0}{2\alpha^3|\omega|^3}I_2]+
 	[\sum_{i=1}^{N-2}m\frac{\psi((\omega_i-\omega_N)^2)}{2\alpha^3|\omega_i-\omega_N|^5}+\frac{\psi(\omega_N^2)}{2\alpha^3|\omega_N|^5}]\\
 	&=&
 	\frac{1}{\alpha^3}AI_2+\frac{1}{3\alpha^3}\psi(B).
 \end{eqnarray*}
 Therefore, we have the following formula
 \begin{eqnarray*}
 	D&=&I_2-\frac{1}{\mu}(\sum_{i=1}^{N-1}\frac{m_i}{|a_i-a_N|^3})I_2+\frac{3}{\mu}(\sum_{i=1}^{N-1}m_i\frac{(a_i-a_N)(a_i-a_N)^T}{|a_i-a_N|^5})\\
 	&=&I_2-\frac{1}{\mu\alpha^3}\cdot 2AI_2+\frac{3}{\mu\alpha^3}(AI_2+\frac{1}{3}\psi(B))\\
 	&=&I_2+\frac{1}{\mu\alpha^3}(AI_2+\psi(B))\\
 	&=&I_2+\frac{A}{\mu\alpha^3}I_2+\frac{1}{\mu\alpha^3}\psi(B)\\
 	&=&I_2+\frac{1+\beta_{2.0}}{2}I_2+\psi(\beta_{22.0}).
 \end{eqnarray*}
 This implies:
 \begin{eqnarray*}
 	\frac{1+\beta_{2.0}}{2}=\frac{A}{\mu\alpha^3},\quad 
 	\beta_{22.0}=\frac{B}{\mu\alpha^3}.
 \end{eqnarray*}
 Let $u=\frac{1+\beta_{2.0}}{2}$ and $ v=\beta_{22.0}$, hence we obtain
 \begin{displaymath}
 uI_2+\psi(v)=\left(\begin{array}{cc}
 u+v_x&v_y\\
 v_y&u-v_x
 \end{array}
 \right),
 \end{displaymath}
 where $v_x=\text{Re}\,\beta_{22.0}$ and $ v_y=\text{Im}\,\beta_{22.0}$, indicating
 $\text{tr}(uI_2+\psi(v))=2u$ and $\text{det}(uI_2+\psi(v))=u^2-|v|^2$. 
 Therefore, the characteristic polynomial  of $uI_2+\psi(v)$ is $\lambda^2-2u\lambda+u^2-|v|^2$,
 and then the eigenvalues of the matrix $uI_2+\psi(v)$ are $u\pm|v|$. 
 Thus the eigenvalues of D are
 \begin{eqnarray*}
 	\lambda_{3,4}=1+\frac{1+\beta_{2.0}}{2}\pm|\beta_{22.0}|=1+\frac{A}{\mu\alpha^3}\pm\frac{|B|}{\mu\alpha^3}.
 \end{eqnarray*}
 The following formulas are the direct results from above arguments.
 \begin{eqnarray*}
 	\mu&=&\sum_{1\leq i<j\leq N-1}\frac{m_im_j}{|a_i-a_j|}\\
 	&=&\sum_{1\leq i<j\leq N-2}\frac{m^2}{\alpha|\omega_i-\omega_j|}+\sum_{i=1}^{N-2}\frac{mm_0}{\alpha\omega_i},
 \end{eqnarray*}
 then
 \begin{eqnarray*}
 	\mu\alpha^3&=&\alpha^2(\sum_{1\leq i<j\leq N-2}\frac{m^2}{\omega_i-\omega_j}+\sum_{i=1}^{N-2}\frac{mm_0}{|\omega_i|})
 	\\&=&\frac{m}{N-2}\sum_{1\leq i<j\leq N-2 }\frac{1}{|\omega_i-\omega_j|}+m_0
 	\\&=&\frac{m}{N-2}\sum_{i<j}\frac{1}{|\omega_i-\omega_j|}+m_0
 	\\&=&\frac{m}{2}\sum_{1<j\leq N-2}\frac{1}{|\omega_0-\omega_j|}+m_0
 	\\&=&\omega ^2.
 \end{eqnarray*}
 Consequently, $\lambda_4=\frac{l_3(x,\theta)}{\mu\alpha^3}
 =\frac{l_3(x,\theta)}{\omega^2}$, where $l_3(x,\theta)=A+\omega^2-|B|$.
\end{proof}

For the central configurations when the massless body lies
at $S_1$ or $S_2$, we have
\begin{proposition}\label{the eigenvalues of the case S_1 and S_2}
If $\rho=S_1$ (or $S_2$), we have
$
l_3(\rho,0)<0$;
and hence,
$\lambda_4<0.$ Moreover, $\frac{A}{\omega^2}\to 2$ and $\frac{B}{\omega^2}\to6$, as  ${m_0\over M}\to +\infty$
\end{proposition}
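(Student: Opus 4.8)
For the first assertion I would simply invoke the spectral instability of $S_1$ and $S_2$ established by Bang and Elmabsout \cite{BaE}. For the circular relative equilibrium the linearized motion on the vertex axis is a saddle of the amended potential exactly when the Hessian eigenvalue $\lm_4$ is negative, and by Proposition~\ref{polygon.prop.lm3.lm4} we have $\lm_4=l_3(\rho,0)/\om^2$ with $\om^2=m_0+Mh_n(1)>0$. Hence $l_3(\rho,0)<0$ is \emph{equivalent} to $\lm_4<0$, and both are furnished by \cite{BaE} for every $m_0>0$, so this part reduces to citing \cite{BaE} together with the dictionary between Bang's quantities and ours recorded in the Appendix.

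The content of the proposition is the asymptotic behaviour of $A/\om^2$ and $B/\om^2$ as $m_0/M\to+\infty$, which I would obtain by first locating the massless body in this limit. Since $w_0=\rho$ is real on the semi-axis $\th=0$, the equilibrium equation $\phi(\rho,0)=m_0(1-\rho^3)+M\bigl(h_n(\tfrac1\rho,0)-h_n(1)\bigr)=0$ forces $\rho\to1$: were $\rho$ to stay bounded away from $1$, the term $m_0(1-\rho^3)$ would grow without bound while $M\bigl(h_n(\tfrac1\rho,0)-h_n(1)\bigr)$ remains bounded, a contradiction. Thus $w_0$ approaches the polygon vertex $\om_n=1$, and the only singular contribution to $h_n(\tfrac1\rho,0)$ is the $j=n$ term $\tfrac{1-1/\rho}{|1-1/\rho|^3}\sim\pm\tfrac{1}{n|\rho-1|^2}$ (the sign matching $\rho\gtrless1$). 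Balancing it against $m_0(1-\rho^3)\approx\mp3m_0|\rho-1|$ in the equilibrium equation yields the rate
\[
|\rho-1|^3\;\sim\;\frac{M}{3nm_0},\qquad
\frac{1}{|w_0-\om_n|^3}\;\sim\;\frac{3nm_0}{M}.
\]

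With this rate I would compute the three quantities to leading order. Because $h_n(1)$ depends only on $n$, we have $\om^2=m_0+Mh_n(1)\sim m_0$. In $A=\tfrac{M}{2n}\sum_j\tfrac{1}{|w_0-\om_j|^3}+\tfrac{m_0}{2|w_0|^3}$ the near-collision term $j=n$ contributes $\tfrac{M}{2n}\cdot\tfrac{1}{|w_0-\om_n|^3}\sim\tfrac{3m_0}{2}$, the central-body term tends to $\tfrac{m_0}{2}$ (as $w_0\to1$), and every remaining $j\ne n$ term is $O(M)=o(m_0)$ since those distances stay bounded below; hence $A\sim2m_0$ and $A/\om^2\to2$. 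The same bookkeeping applies to $B$: as $w_0-\om_n=\rho-1$ is real, the phase factor $\tfrac{w_0-\om_n}{\ol{w_0-\om_n}}$ equals $1$, so the $j=n$ term of $B$ contributes $\tfrac{3M}{2n}\cdot\tfrac{1}{|w_0-\om_n|^3}\sim\tfrac{9m_0}{2}$; the central-body term (whose phase factor $w_0/\ol{w_0}$ also equals $1$) tends to $\tfrac{3m_0}{2}$, and the rest is $o(m_0)$. Therefore $B\sim6m_0$ and $B/\om^2\to6$; in particular $\lm_4=1+A/\om^2-|B|/\om^2\to-3<0$, consistent with the first assertion.

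The main obstacle is making the formal balancing rigorous: one must justify that $\rho\to1$ at exactly the rate $|\rho-1|\sim\bigl(M/(3nm_0)\bigr)^{1/3}$ and then verify, uniformly, that the single near-collision term $j=n$ dominates the sums defining $A$ and $B$ while all other vertex terms are negligible. The delicate feature is that the singular vertex contribution and the central-body contribution are of the \emph{same} order $m_0$ and must be combined with the correct coefficients, $\tfrac32$ versus $\tfrac12$ for $A$ and $\tfrac92$ versus $\tfrac32$ for $B$ (the extra factor $3$ in each pair coming straight from the definition of $B$); it is precisely their sum that produces the clean limits $2$ and $6$. This near-collision with the vertex $\om_n=1$ is exactly what is absent for $S_3$, whose direction $\th=\pi/n$ bisects an edge and keeps all distances bounded below, which is why $S_3$ is treated separately and turns out to be stable.
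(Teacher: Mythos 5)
Your proposal is correct and follows essentially the same route as the paper: the first assertion is, as in the paper, a citation of Lemma 2 of \cite{BaE} combined with the dictionary $\lambda_4=l_3(\rho,0)/\omega^2$ from Proposition \ref{polygon.prop.lm3.lm4}, and the limits $A/\omega^2\to2$, $B/\omega^2\to6$ come from the equilibrium equation forcing $s=1/\rho\to1$ together with the domination of the singular $j=n$ vertex term plus the central-body term. The only difference is cosmetic: the paper evaluates the limits directly as ratios of the exact sums, so the cancellation happens inside the quotient and the rate $|\rho-1|^3\sim M/(3nm_0)$ never needs to be made explicit, whereas you extract that rate first and then substitute.
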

\begin{proof}
    The first assertion immediately follows by Lemma 2 of \cite{BaE}
    and Proposition \ref{polygon.prop.lm3.lm4}.
    Let us verify the second assertion in detail. Through the equilibrium equation 
    $$
    \psi_1(\rho)=m_0(1-\rho^3)+M\left(h_n\left(\frac{1}{\rho}\right)-h_n(1)\rho^3\right)=0,
    $$
     defined on p.309 in \cite{BaE},
    we have the following formula:
    $$
    m_0=\frac{M(h_n(\frac{1}{\rho})-h_n(1)\rho^3)}{\rho^3-1}=\frac{M(s^3h_n(s)-h_n(1))}{1-s^3}
    $$
    $$
    \omega^2=m_0+Mh_n(1)=\frac{Ms^3}{1-s^3}(h_n(s)-h_n(1))
    $$
    In this case, $\omega_0=\rho$
    \begin{eqnarray*}
         A&=&\frac{M}{2n}\sum_{j=1}^{n}\frac{1}{|w_0-\omega_j|^3}+\frac{m_0}{2|w_0|^3}\\
         &=&\frac{M}{2n}\sum_{j=1}^{n}\frac{1}{((w_0|^2+|\omega_j|^2-2 Re\thinspace\omega_0 \thinspace\bar{\omega_j})^{3/2}}+\frac{m_0}{2|w_0|^3}\\
         &=&\frac{Ms^3}{2n}\sum_{j=1}^{n}\frac{1}{(1+s^2-2s\cos{\frac{2j\pi}{n}})^{3/2}}+\frac{m_0s^3}{2}\\
         &=&\frac{Ms^3}{2n}\sum_{j=1}^{n}\frac{1}{(1+s^2-2s\cos{\frac{2j\pi}{n}})^{3/2}}+\frac{Ms^3(s^3h_n(s)-h_n(1))}{2(1-s^3)}\\
         &=&\frac{Ms^3}{2n(1-s^3)}(ns^3h_n(s)-nh_n(1)+\sum_{j=1}^{n}\frac{1-s^3}{(1+s^2-2s\cos{\frac{2j\pi}{n}})^{3/2}})\\
         &=&\frac{Ms^3}{2n(1-s^3)}\Bigl(\sum_{j=1}^{n}\frac{1-s^4\cos{2j\pi\over n}}{(1+s^2-2s\cos{\frac{2j\pi}{n}})^{3/2}}-nh_n(1)\Bigr)
    \end{eqnarray*}
    and
 \begin{eqnarray*}
   B&=&\frac{3M}{2n}\sum_{j=1}^{n}\frac{(w_0-\omega_j)^2}{|w_0-\omega_j|^5}+\frac{3m_0}{2|w_0|^3}\\
   &=&\frac{3M}{2n}\sum_{j=1}^{n}\frac{\rho^2+e^{-\frac{4j\pi}{n}i}-2\rho e^{-\frac{2j\pi}{n}i}}{(\rho^2+1-2\rho\cos{\frac{2j\pi}{n}})^{5/2}}+\frac{3m_0}{2|w_0|^3}\\
   &=&\frac{3Ms^3}{2n}\sum_{j=1}^{n}\frac{1+s^2(\cos{\frac{4j\pi}{n}}-i\sin{\frac{4j\pi}{n}})-2s(\cos{\frac{2j\pi}{n}}-i\sin{\frac{2j\pi}{n}})}
   {(1+s^2-2s\cos{\frac{2j\pi}{n}})^{5/2}}+\frac{3m_0s^3}{2}\\
   &=&\frac{3Ms^3}{2n}
   \Bigl(
   \sum_{j=1}^{n}
   \frac{1+s^2\cos{\frac{4j\pi}{n}}-2s\cos{\frac{2j\pi}{n}}}
   {(1+s^2-2s\cos{\frac{2j\pi}{n}})^{5/2}}
   +i
   \sum_{j=1}^{n}
   \frac{2s\sin{\frac{2j\pi}{n}}-s^2\sin{\frac{4j\pi}{n}}}
   {(1+s^2-2s\cos{\frac{2j\pi}{n}})^{5/2}}
   +\frac{3m_0s^3}{2}\\
   &=&\frac{3Ms^3}{2n}
   \sum_{j=1}^{n}
   \frac{1+s^2\cos{\frac{4j\pi}{n}}-2s\cos{\frac{2j\pi}{n}}}
   {(1+s^2-2s\cos{\frac{2j\pi}{n}})^{5/2}}
   +\frac{3m_0s^3}{2},
    \end{eqnarray*}
     here
   $$
    \sum_{j=1}^{n}
   \frac{2s\sin{\frac{2j\pi}{n}}-s^2\sin{\frac{4j\pi}{n}}}
   {(1+s^2-2s\cos{\frac{2j\pi}{n}})^{5/2}}=0
   $$
   which can be demonstrated by the qualities of trigonometric function.
   Furthermore, if $\frac{m_0}{M}$ is sufficiently large, that is, $s$ is sufficiently near to 1, then 
   $|B|=B$.
   \\
   From above, we obtain
\begin{eqnarray*}
    \lim_{s\to 1}\frac{2A}{\omega^2}&=&\lim_{s\to 1}\frac{\frac{1}{n}\sum_{j=1}^{n}\frac{1-s^4\cos{\frac{2j\pi}{n}}}{(1+s^2-2s\cos{\frac{2j\pi}{n}})^{3/2}}-h_n(1)}
    {h_n(s)-h_n(1)}\\
    &=&\lim_{s\to 1}
    \frac{
    \frac{1}{n}\sum_{j=1}^{n}\frac{1-s^4\cos{\frac{2j\pi}{n}}}{(1+s^2-2s\cos{\frac{2j\pi}{n}})^{3/2}}-h_n(1)
    }
    {
    \frac{1}{n}\sum_{j=1}^{n}\frac{1-s\cos{\frac{2j\pi}{n}}}{(1+s^2-2s\cos{\frac{2j\pi}{n}})^{3/2}}
    -h_n(1)
    }\\
   &=&1+\lim_{s\to 1}\frac{
    \frac{1}{n}\sum_{j=1}^{n}\frac{(s-s^4)\cos{\frac{2j\pi}{n}}}{(1+s^2-2s\cos{\frac{2j\pi}{n}})^{3/2}}
   }
   {
    \frac{1}{n}\sum_{j=1}^{n}\frac{1-s\cos{\frac{2j\pi}{n}}}{(1+s^2-2s\cos{\frac{2j\pi}{n}})^{3/2}}
    -h_n(1)
   }\\
   &=&1+\lim_{s\to 1}\frac{
    \frac{1}{n}\sum_{j=1}^{n-1}\frac{(s-s^4)\cos{\frac{2j\pi}{n}}}{(1+s^2-2s\cos{\frac{2j\pi}{n}})^{3/2}}
   +\frac{1}{n}\frac{s(1-s^3)}{(1-s)^3}
   }
   {
    \frac{1}{n}\sum_{j=1}^{n-1}\frac{1-s\cos{\frac{2j\pi}{n}}}{(1+s^2-2s\cos{\frac{2j\pi}{n}})^{3/2}}
    +\frac{1}{n}\frac{1-s}{(1-s)^3}
    -h_n(1)
   }\\
   &=&4,
\end{eqnarray*}
that is,
$$
\lim_{s\to1}\frac{A}{\omega^2}=2
$$
In the meanwhile, using the similar methods, we can derive the following formula:
   \begin{eqnarray*}
       \lim_{s\to 1 }\frac{B}{\omega^2}&=& \lim_{s\to 1}
       \frac{
      \frac{3Ms^3}{2n}\sum_{j=1}^{n}
      \frac{
      1+s^2\cos{\frac{4j\pi}{n}}-2s\cos{\frac{2j\pi}{n}}
      }      {  (1+s^2-2s\cos{\frac{2j\pi}{n})} ^{5/2 } }
      +\frac{3Ms^3}{2(1-s^3)}(s^3h_n(s)-h_n(1))
      }
       {
       \frac{Ms^3}{1-s^3}(h_n(s)-h_n(1))
       }\\
       &=&\lim_{s\to 1}\frac{
       \frac{3(1-s^3)}{2n}\sum_{j=1}^{n}
      \frac{
      1+s^2\cos{\frac{4j\pi}{n}}-2s\cos{\frac{2j\pi}{n}}
      }      {  (1+s^2-2s\cos{\frac{2j\pi}{n})} ^{5/2 } }+\frac{3}{2}(s^3h_n(s)-h_n(1))
       }
       {h_n(s)-h_n(1)}\\
       &=&6.
   \end{eqnarray*}
\end{proof}

Consider the condition $\theta=\frac{\pi}{n}$ and let $s=\frac{1}{\rho}$. Through the  equilibrium equation 
    $$
    \psi_2(\rho)=m_0(1-\rho^3)+M\left(h_n\left(\frac{1}{\rho},\frac{\pi}{n}\right)-h_n(1)\rho^3\right)=0,
    $$
defined on p.309 in \cite{BaE}, we can deduce 
\begin{equation}\lb{S3}
m_0=M\frac{s^3h_n(s,\frac{\pi}{n})-h_n(1)}{1-s^3}.
\end{equation}

For the central configuration when the massless body lies at
 $S_3$, we have
\begin{proposition}\lb{A.3}
    If $\rho=S_3$,
    we have $l_3(\rho,\frac{\pi}{n})>0$ and $2A-\omega^2>0$.
    Moreover, if $\frac{m_0}{M}\to+\infty$, we have
    \begin{equation}
        l_3\to0^+,\quad \frac{A}{\omega^2}\to\left(\frac{1}{2}\right)^+.
    \end{equation}
    
    Hence, if $\frac{m_0}{M}\to+\infty$,
    we have
    \begin{equation}
        \lambda_3+\lambda_4\to3,\quad
        \lambda_4\to0^+.
    \end{equation}
\end{proposition}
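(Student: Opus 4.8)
The plan is to mirror the computation already carried out for $S_1$ and $S_2$ in Proposition \ref{the eigenvalues of the case S_1 and S_2}, the only structural change being the angular offset $u=\pi/n$ (so that the relevant angles are $\phi_j=(2j+1)\pi/n$ instead of $2j\pi/n$) and, crucially, the \emph{absence of any singular self-term}: since the massless body at $S_3$ sits on the bisector $\theta=\pi/n$ between two vertices, $|w_0-\omega_j|$ stays bounded away from $0$ for every $j$, even as $\rho\to1$. First I would use the equilibrium equation \eqref{S3} to eliminate $m_0$ and write, with $s=1/\rho$ and $r_j^2=1+s^2-2s\cos\phi_j$,
\begin{equation}
\omega^2=\frac{Ms^3}{n(1-s^3)}\Big(\sum_{j=1}^n\frac{1-s\cos\phi_j}{r_j^3}-nh_n(1)\Big),\qquad
A=\frac{Ms^3}{2n(1-s^3)}\Big(\sum_{j=1}^n\frac{1-s^4\cos\phi_j}{r_j^3}-nh_n(1)\Big),
\end{equation}
together with the analogous closed form for $B$, whose imaginary part vanishes by the reflection symmetry of the configuration about the axis $\theta=\pi/n$, so that $|B|=B$ for $s$ near $1$.

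Next come the sign claims. Subtracting the two displays gives the clean identity
\[
2A-\omega^2=\frac{Ms^4}{n}\sum_{j=1}^n\frac{\cos\phi_j}{r_j^3},
\]
from which I would deduce $2A-\omega^2>0$ for all $s\in(0,1)$: the weight $r_j^{-3}$ concentrates on the vertices nearest to $S_3$, where $\cos\phi_j>0$, so the positive terms dominate (at $s=0$ the sum is $\sum_j\cos\phi_j=0$ as a sum of roots of unity, and it increases from there). For the sign of $l_3$ I would invoke the corresponding estimate of Bang--Elmabsout \cite{BaE} (the $S_3$ analogue of the Lemma~2 used in the $S_1,S_2$ case), which together with Proposition \ref{polygon.prop.lm3.lm4} yields $\lambda_4=l_3/\omega^2>0$, i.e.\ $l_3>0$.

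For the limits I would first record that under the normalization $M+m_0=1$ one has $m_0/M=G(s):=\big(s^3h_n(s,\pi/n)-h_n(1)\big)/(1-s^3)$, and that $G(s)\to+\infty$ exactly as $s\to1^-$ (since $h_n(1,\pi/n)>h_n(1)$); in this limit $M\to0$, $m_0\to1$ and $\omega^2\to1$. The ratio $2A/\omega^2$ is the quotient of the two bracketed sums above, and at $s=1$ the numerator and denominator \emph{coincide} because $1-s^4\cos\phi_j=1-s\cos\phi_j=1-\cos\phi_j$; hence $A/\omega^2\to\tfrac12$, and the identity for $2A-\omega^2$ upgrades this to $(\tfrac12)^+$. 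Running the Bang--Elmabsout-style asymptotics on the $(5/2)$-power sums defining $B$ gives $|B|/\omega^2\to\tfrac32$ (the $S_3$ analogue of the value $6$ found for $S_1,S_2$), whence $\lambda_4=1+A/\omega^2-|B|/\omega^2\to0$ and $l_3=\lambda_4\omega^2\to0$, the sign being $0^+$ by the already-established $l_3>0$. Finally $\lambda_3+\lambda_4=2+2A/\omega^2\to3$ and $\lambda_4\to0^+$, as claimed.

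I expect the main obstacle to be the asymptotic evaluation $|B|/\omega^2\to\tfrac32$: unlike in the $S_1,S_2$ case there is no singular term whose leading divergence trivializes the limit, so the value arises from a genuine cancellation among regular $(1+s^2-2s\cos\phi_j)^{-5/2}$ sums at $s=1$, which must be extracted carefully (e.g.\ by an l'H\^opital reduction together with the trigonometric identities that kill the imaginary part of $B$). A secondary, more routine obstacle is the rigorous positivity of $\sum_j\cos\phi_j/r_j^3$, which I would establish by pairing $\phi_j$ with $2\pi-\phi_j$ and exploiting the monotonicity of $r_j^{-3}$ in $\cos\phi_j$, or else quote directly from \cite{BaE}.
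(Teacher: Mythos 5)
Your overall skeleton matches the paper's proof: eliminate $m_0$ via the equilibrium equation \eqref{S3}, write $\omega^2$, $A$, $B$, $l_3$ as explicit sums in $s=1/\rho$, observe that $M\to0$, $m_0\to1$ as $s\to1^-$ so that $\omega^2\to1$ and $A\to\frac12$, and reduce $2A-\omega^2$ to the sum $\frac{Ms^4}{n}\sum_j\cos\phi_j/r_j^3$. Also note that your "main obstacle," the limit $|B|/\omega^2\to\frac32$, is actually trivial: since $S_3$ lies on the bisector the $(5/2)$-power sum is regular at $s=1$, its prefactor $M$ tends to $0$, and the surviving term is $\frac{3m_0s^3}{2}\to\frac32$; no delicate cancellation is needed there.

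The two places where your proposal has genuine gaps are exactly the two positivity claims. First, your pairing $\phi_j\leftrightarrow 2\pi-\phi_j$ is vacuous: with $\phi_j=\frac{(2j+1)\pi}{n}$ one has $\phi_{n-1-j}=2\pi-\phi_j$, so the paired terms have the \emph{same} cosine and the same $r_j$ — no cancellation between positive and negative contributions is produced. The pairing you would need is $c\leftrightarrow -c$ (i.e.\ $\phi\leftrightarrow\pi-\phi$), which works only for $n$ even; for odd $n$ the cosines $\cos\frac{(2j+1)\pi}{n}$ do not come in opposite pairs, and "the positive terms dominate" remains an unproved heuristic. The paper closes this by quoting the integral representation of such cyclic sums as a product of positive functions (Lemma 3 of \cite{BaE2}), which is your fallback option and is in fact the only route offered. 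Second, for $l_3>0$ you propose to invoke "the $S_3$ analogue of Lemma 2 of \cite{BaE}"; but the paper explicitly remarks that the formulas for $\omega^2$ and $l_3(s,\frac{\pi}{n})$ on pp.~312--313 of \cite{BaE} are erroneous, so this citation cannot be taken off the shelf. The paper instead computes $\lim_{s\to1^-}l_3/M$ directly and simplifies it to the manifestly positive sum $\frac{1}{n}\sum_j\bigl(-4\sin^4(\tfrac{j\pi}{n}+\tfrac{\pi}{2n})+8\sin^2(\tfrac{j\pi}{n}+\tfrac{\pi}{2n})\bigr)/\bigl(2^5\sin^5(\tfrac{j\pi}{n}+\tfrac{\pi}{2n})\bigr)$; this single computation simultaneously yields $l_3>0$ for $s$ near $1$ (which is all the proposition is used for) and the sign $l_3\to0^+$. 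Without it, your derivation of the $0^+$ sign rests entirely on the uncorroborated citation.
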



\begin{proof}\label{pi/n}
From the expression of $m_0$, we obtain
$$\frac{m_0}{M}({1-s^3})=s^3{h_n(s,\frac{\pi}{n})-h_n(1)}.$$
Based on the equation above and the normalization $M+m_0=1$, we have 
$$
   \lim_{s\to 1^{-}}\frac{1-s^3}{M}=h_n(1,\frac{\pi}{n})-h_n(1).
$$
Combining the expressions of $\omega^2$, $A$ and $l_3(s,\frac{\pi}{n})$ on p.312-p.313 in \cite{BaE}, 
\begin{eqnarray*}
   \omega^2&=&\frac{Ms^3}{1-s^3}(h_n(s,\frac{\pi}{n})-h_n(1)),\\
   A &=&\frac{Ms^3}{2(1-s^3)}\left(s^3h_n\left(s,\frac{\pi}{n}\right)-h_n(1)+\frac{1-s^3}{n}\sum_{j=1}^{n}\frac{1}{(1+s^2-2s\cos({\frac{2j\pi}{n}}+\frac{\pi}{n})^{3/2}}\right),\\
   l_3(s,\frac{\pi}{n})&=&A+\omega^2-|B|=\frac{Ms^3}{2n}\sum_{j=1}^{n}\frac{1}{(1+s^2-2s\cos({\frac{2j\pi}{n}}+\frac{\pi}{n})^{3/2}}+Ms^3h_n(s,\frac{\pi}{n})\\
   & &-\frac{3Ms^3}{2n}\sum_{j=1}^{n}\frac{1+s^2\cos\left({\frac{4j\pi}{n}}+\frac{2\pi}{n}\right)-2s\cos\left({\frac{2j\pi}{n}+\frac{\pi}{n}}\right)}{(1+s^2-2s\cos{(\frac{2j\pi}{n}}+\frac{\pi}{n})^{5/2}},
\end{eqnarray*}
we can deduce $\omega^2\to 1$, $A\to \frac{1}{2}$ and $l_3(s,\frac{\pi}{n})\to 0$ as $s\to 1^{-}$.

Furthermore, 
\begin{eqnarray*}
\lim_{s\to1^{-}}\frac{l_3}{M}&=&\frac{1}{2n}\sum_{j=1}^{n}\frac{1}{(2-2\cos(\frac{2j\pi}{n}+\frac{\pi}{n}))^{\frac{3}{2}}}
+\frac{1}{n}\sum_{j=1}^{n}\frac{1-\cos(\frac{2j\pi}{n}+\frac{\pi}{n})}{(2-2\cos(\frac{2j\pi}{n}+\frac{\pi}{n}))^{\frac{3}{2}}}\\
&\;&
-\frac{3}{2n}\sum_{j=1}^{n}
\frac{1+\cos(\frac{4j\pi}{n}+\frac{2\pi}{n})-2\cos(\frac{2j\pi}{n}+\frac{\pi}{n})}
{(2-2\cos(\frac{2j\pi}{n}+\frac{\pi}{n}))^{\frac{5}{2}}}\\
&=&\newcommand{\uj}{\frac{j\pi}{n}+\frac{\pi}{2n}}
\frac{1}{2n}\sum_{j=1}^{n}\frac{1}{2^3\sin^3(\uj)}
+\frac{1}{n}\sum_{j=1}^{n}\frac{1-\cos(2(\uj))}{2^3\sin^3(\uj)}\\
&\;&\newcommand{\uj}{\frac{j\pi}{n}+\frac{\pi}{2n}}
-\frac{3}{2n}\sum_{j=1}^{n}\frac{1+\cos(4(\uj))-2\cos(2({\uj}))}{2^5\sin^5(\uj)}
\\
&=&\newcommand{\uj}{\frac{j\pi}{n}+\frac{\pi}{2n}}
\frac{1}{n}\sum_{j=1}^{n}\frac{-4\sin^4(\uj)+8\sin^2(\uj)}{2^5\sin^5(\uj)}\\
&\;&\newcommand{\uj}{\frac{j\pi}{n}+\frac{\pi}{2n}}
-\frac{3}{2n}\sum_{j=1}^{n}\frac{1+\cos4(\uj)-2\cos2(\uj)}{2^5\sin^5(\uj)}
\\
&=&\newcommand{\uj}{\frac{j\pi}{n}+\frac{\pi}{2n}}\frac{1}{n}\sum_{j=1}^{n}\frac{-4\sin^4(\uj)+8\sin^2(\uj)}{2^5\sin^5(\uj)}\\
&>&0,
\end{eqnarray*}
which implies that $\frac{l_3}{M}$ is bigger than 0 when s sufficiently near to 1. By a direct calculation, we have
$$2A-\omega^2=\frac{Ms^3}{n}\sum_{j=1}^n\frac{s \thinspace\cos(\frac{2j\pi}{n}+\frac{\pi}{n})}{(1+x^2-2x\thinspace cos(\frac{2j\pi}{n}+\frac{\pi}{n}))^{3/2}}.$$
The sum of above equation can be written as a integration of a product of two positive continuous functions(see the proof of the  Lemma 3 of \cite{BaE2})as follows:

$$\sum_{j=1}^n\frac{s \thinspace\cos(\frac{2j\pi}{n}+\frac{\pi}{n})}{(1+x^2-2x\thinspace cos(\frac{2j\pi}{n}+\frac{\pi}{n}))^{3/2}}
=\frac{1}{\pi}\int_0^1\frac{\tau^{\frac{1}{2}}(1-s^2\tau^2)}{[(1-1+\tau)(1-s^2\tau)]^{\frac{1}{2}}}$$
$$
\cdot \Biggl\{\frac{s\tau cos(\frac{2j\pi}{n}+\frac{\pi}{n})}{(1+s^2\tau^2-2s\tau cos(\frac{2j\pi}{n}+\frac{\pi}{n}))^2}\Biggr\}_n
 d\tau.$$
In  short, $ 2A-\omega^2>0.$
\end{proof}

\begin{remark}
    In the case $\theta=\frac{\pi}{n}$, there are errors in the original computations of $\omega^2$ on p.~312 and $l_3(s,\frac{\pi}{n})$ on p.~313 in \cite{BaE} by D.~Bang and B.~Elmabsout. They should be corrected as shown in the proof of \ref{pi/n}.

\end{remark}

\subsection{$\omega$-Maslov-type indices and $\omega$-Morse indices}\label{subsec:5.2}

For reader's conveniences, here we give a brief introduction on the $\om$-Maslov type
index theory, and we refer to Y. Long's book \cite{Lon4} for the history and more details
of this index theory.

As usual, the symplectic group $\Sp(2n)$ is defined by
$$ \Sp(2n) = \{M\in {\rm GL}(2n,\R)\,|\,M^TJM=J\}, $$
whose topology is induced from that of $\R^{4n^2}$. For $\tau>0$
we are interested in paths in $\Sp(2n)$:
$$ \P_{\tau}(2n) = \{\ga\in C([0,\tau],\Sp(2n))\,|\,\ga(0)=I_{2n}\}, $$
which is equipped with the topology induced from that of $\Sp(2n)$.
For any $\om\in\U$ and $M\in\Sp(2n)$, the following real function was
introduced in \cite{Lon2}:
$$ D_{\om}(M) = (-1)^{n-1}\ol{\om}^n\det(M-\om I_{2n}). $$
Thus for any $\om\in\U$ the following codimension $1$ hypersurface
in $\Sp(2n)$ is defined (\cite{Lon2}):
$$ \Sp(2n)_{\om}^0 = \{M\in\Sp(2n)\,|\, D_{\om}(M)=0\}.  $$
For any $M\in \Sp(2n)_{\om}^0$, we define a co-orientation of
$\Sp(2n)_{\om}^0$ at $M$ by the positive direction
$\frac{d}{dt}Me^{t J}|_{t=0}$ of the path $Me^{t J}$ with $0\le t\le
\varepsilon$ and $\varepsilon$ being a small enough positive number. Let
\bea
\Sp(2n)_{\om}^{\ast} &=& \Sp(2n)\bs \Sp(2n)_{\om}^0,   \nn\\
\P_{\tau,\om}^{\ast}(2n) &=&
      \{\ga\in\P_{\tau}(2n)\,|\,\ga(\tau)\in\Sp(2n)_{\om}^{\ast}\}, \nn\\
\P_{\tau,\om}^0(2n) &=& \P_{\tau}(2n)\bs \P_{\tau,\om}^{\ast}(2n). \nn\eea
For any two continuous paths $\xi$ and $\eta:[0,\tau]\to\Sp(2n)$ with
$\xi(\tau)=\eta(0)$, we define their concatenation by
$$ \eta\ast\xi(t) = \left\{\begin{matrix}
            \xi(2t), & \quad {\rm if}\;0\le t\le \tau/2, \cr
            \eta(2t-\tau), & \quad {\rm if}\; \tau/2\le t\le \tau. \cr\end{matrix}\right. $$

As in \cite{Lon4}, for $\lm\in\R\bs\{0\}$, $a\in\R$, $\th\in (0,\pi)\cup (\pi,2\pi)$,
$b=\left(\begin{matrix}b_1 & b_2\cr
                 b_3 & b_4\cr\end{matrix}\right)$ with $b_i\in\R$ for $i=1, \ldots, 4$, and $c_j\in\R$
for $j=1, 2$, we denote respectively some normal forms by
\bea
&& D(\lm)=\left(\begin{matrix}\lm & 0\cr
                         0  & \lm^{-1}\cr\end{matrix}\right), \qquad
   R(\th)=\left(\begin{matrix}\cos\th & -\sin\th\cr
                        \sin\th  & \cos\th\cr\end{matrix}\right),  \nn\\
&& N_1(\lm, a)=\left(\begin{matrix}\lm & a\cr
                             0   & \lm\cr\end{matrix}\right), \qquad
   N_2(e^{\sqrt{-1}\th},b) = \left(\begin{matrix}R(\th) & b\cr
                                           0      & R(\th)\cr\end{matrix}\right),  \nn\\
&& M_2(\lm,c)=\left(\begin{matrix}\lm &   1 &       c_1 &         0 \cr
                              0 & \lm &       c_2 & (-\lm)c_2 \cr
                              0 &   0 &  \lm^{-1} &         0 \cr
                              0 &   0 & -\lm^{-2} &  \lm^{-1} \cr\end{matrix}\right). \nn\eea
Here $N_2(e^{\sqrt{-1}\th},b)$ is {\bf trivial} if $(b_2-b_3)\sin\th>0$, or {\bf non-trivial}
if $(b_2-b_3)\sin\th<0$, in the sense of Definition 1.8.11 on p.41 of \cite{Lon4}. Note that
by Theorem 1.5.1 on pp.24-25 and (1.4.7)-(1.4.8) on p.18 of \cite{Lon4}, when $\lm=-1$ there hold
\bea
c_2 \not= 0 &{\rm if\;and\;only\;if}\;& \dim\ker(M_2(-1,c)+I)=1, \nn\\
c_2 = 0 &{\rm if\;and\;only\;if}\;& \dim\ker(M_2(-1,c)+I)=2. \nn\eea

\begin{definition}\lb{D2.1} (\cite{Lon2}, \cite{Lon4})
For any $\om\in\U$ and $M\in \Sp(2n)$, the $\om$-nullity $\nu_{\om}(M)$ is defined by
\be \nu_{\om}(M)=\dim_{\C}\ker_{\C}(M - \om I_{2n}).  \lb{A2.2}\ee

For every $M\in \Sp(2n)$ and $\om\in\U$, as in Definition 1.8.5 on p.38 of \cite{Lon4}, we define the
{\bf $\om$-homotopy set} $\Om_{\om}(M)$ of $M$ in $\Sp(2n)$ by
$$  \Om_{\om}(M)=\{N\in\Sp(2n)\,|\, \nu_{\om}(N)=\nu_{\om}(M)\},  $$
and the {\bf homotopy set} $\Om(M)$ of $M$ in $\Sp(2n)$ by
\bea  \Om(M)=\{N\in\Sp(2n)\,&|&\,\sg(N)\cap\U=\sg(M)\cap\U,\,{\it and}\; \nn\\
         &&\qquad \nu_{\lm}(N)=\nu_{\lm}(M)\qquad\forall\,\lm\in\sg(M)\cap\U\}.  \nn\eea
We denote by $\Om^0(M)$ (or $\Om^0_{\om}(M)$) the path connected component of $\Om(M)$ ($\Om_{\om}(M)$)
which contains $M$, and call it the {\bf homotopy component} (or $\om$-{\bf homtopy component}) of $M$ in
$\Sp(2n)$. Following Definition 5.0.1 on p.111 of \cite{Lon4}, for $\om\in \U$ and $\ga_i\in \P_{\tau}(2n)$
with $i=0, 1$, we write $\ga_0\sim_{\om}\ga_1$ if $\ga_0$ is homotopic to $\ga_1$ via
a homotopy map $h\in C([0,1]\times [0,\tau], \Sp(2n))$ such that $h(0)=\ga_0$, $h(1)=\ga_1$, $h(s)(0)=I$,
and $h(s)(\tau)\in \Om_{\om}^0(\ga_0(\tau))$ for all $s\in [0,1]$. We write also $\ga_0\sim \ga_1$, if
$h(s)(\tau)\in \Om^0(\ga_0(\tau))$ for all $s\in [0,1]$ is further satisfied. We write $M\approx N$, if
$N\in \Om^0(M)$.
\end{definition}

Note that we have $N_1(\lm,a)\approx N_1(\lm, a/|a|)$ for $a\in\R\bs\{0\}$ by symplectic coordinate
change, because
$$ \left(\begin{matrix}1/\sqrt{|a|} & 0\cr
                           0  & \sqrt{|a|}\cr\end{matrix}\right)
   \left(\begin{matrix}\lm & a\cr
                  0  & \lm\cr\end{matrix}\right)
   \left(\begin{matrix}\sqrt{|a|} & 0\cr
                           0  & 1/\sqrt{|a|}\cr\end{matrix}\right) = \left(\begin{matrix}\lm & a/|a|\cr
                                                                         0  & \lm\cr\end{matrix}\right). $$

Following Definition 1.8.9 on p.41 of \cite{Lon4}, we call the above matrices $D(\lm)$, $R(\th)$, $N_1(\lm,a)$
and $N_2(\om,b)$ {\bf basic normal forms} of symplectic matrices. As proved in \cite{Lon2} and \cite{Lon3} (cf.
Theorem 1.9.3 on p.46 of \cite{Lon4} and Corollary 2.3.8 on p.58), every $M\in\Sp(2n)$ has its basic normal
form decomposition in $\Om^0(M)$ as a $\dm$-sum of these basic normal forms. Here the $\dm$-sum is introduced
in the above Section 1. Indeed, we have

\begin{lemma} (\cite{Lon4}, p.58) For any $M\in\Sp(2n)$, there exist a path $f:[0,1]\to\Omega^0(M)$ and
an integer $k\ge 0$ such that
$$ f(0)=M \qquad {\it and}\qquad f(1)=M_1(\om_1)\dm\ldots\dm M_k(\om_k)\dm M_0, $$
where if $\sg(M)\cap\U\not=\emptyset$, then $k>0$ and each $M_i(\om_i)$ is a basic normal form of the
eigenvalue $\om_i\in\U$ for $1\le i\le k$;
if $\sg(M)\cap (\C\bs\U)\not=\emptyset$, then $M_0$
does appear in the above decomposition of $f(1)$ and satisfies $M_0=D(2)^{\dm p}$ or
$D(-2)\dm D(2)^{\dm(q-1)}$ for some integers $p\ge 0$ and $q\ge 1$ depending on whether the total
multiplicity of real eigenvalues of $M$ which are strictly less than $-1$ is even or odd respectively.
\end{lemma}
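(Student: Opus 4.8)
The statement is the basic normal form decomposition of an arbitrary $M\in\Sp(2n)$, realized by a path \emph{inside} its homotopy component $\Om^0(M)$. The plan is to split the argument into a purely linear-algebraic reduction (a symplectic Jordan form) and a topological step that upgrades symplectic conjugacy to a path staying in $\Om^0(M)$. First I would exploit the symplectic orthogonality of generalized eigenspaces: from $M^TJM=J$ one gets $\langle JMx,My\rangle=\langle Jx,y\rangle$, so for (generalized) eigenvectors $x,y$ with eigenvalues $\lm,\mu$ one has $(\lm\mu-1)\langle Jx,y\rangle=0$; hence $E_\lm$ and $E_\mu$ are $J$-orthogonal unless $\lm\mu=1$. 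Grouping eigenvalues into their reciprocal-conjugate orbits $\{\lm,\lm^{-1},\bar\lm,\bar\lm^{-1}\}$ then exhibits $\R^{2n}$ as a symplectic direct sum of $M$-invariant symplectic subspaces, one per orbit, so that $M$ is symplectically conjugate to a $\dm$-sum of blocks each supported on a single orbit.

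On each block I would invoke the symplectic Jordan normal form (Theorem 1.9.3 of \cite{Lon4}) to write the block, up to symplectic conjugacy, as a $\dm$-sum of the basic normal forms $D(\lm)$, $R(\th)$, $N_1(\lm,a)$, $N_2(\om,b)$ and $M_2(\lm,c)$. To convert symplectic conjugacy into a path in $\Om^0(M)$ I would use that $\Sp(2n)$ is connected (it deformation retracts onto the connected group $U(n)$): for the conjugating matrix $P$ choose a path $P_s$ in $\Sp(2n)$ from $I_{2n}$ to $P$, and set $M_s=P_sMP_s^{-1}$. Conjugation preserves the entire spectrum and every nullity $\nu_{\lm}(\cdot)$, so $M_s$ remains in $\Om^0(M)$ throughout. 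This yields a path from $M$ to a $\dm$-sum of basic normal forms in which the blocks carrying eigenvalues on $\U$ are already the required $M_i(\om_i)$, accounting for $k>0$ whenever $\sg(M)\cap\U\ne\emptyset$.

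It then remains to normalize the hyperbolic part $M_0'$, the $\dm$-sum of the blocks whose eigenvalues lie off $\U$, to the stated reference form while staying in $\Om^0(M)$. On the hyperbolic symplectic matrices the off-$\U$ eigenvalues move freely, so a real positive pair deforms to $D(2)$, a genuine complex quadruple collapses to $D(2)\dm D(2)$ by sliding $\lm$ onto the positive real axis with modulus kept above $1$, and a real negative pair deforms to $D(-2)$; moreover two $D(-2)$ blocks merge into $D(2)\dm D(2)$ by letting a colliding negative pair rotate through the complex plane to the positive real axis, all moduli staying away from $1$. The only surviving obstruction is the parity of $N_-$, the total multiplicity of real eigenvalues strictly below $-1$, detected by the nonvanishing locally constant invariant ${\rm sign}\det(M_0'+I)=(-1)^{N_-}$ on the hyperbolic set. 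Hence $M_0'\approx D(2)^{\dm p}$ when $N_-$ is even and $M_0'\approx D(-2)\dm D(2)^{\dm(q-1)}$ when $N_-$ is odd, and $M_0$ appears precisely when $\sg(M)\cap(\C\bs\U)\ne\emptyset$.

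The hard part will be the symplectic Jordan form on the on-circle blocks, in particular the eigenvalues $\pm1$ and $e^{\sqrt{-1}\th}$ carrying nontrivial Jordan chains of indefinite Krein type: there one must reduce the Jordan structure while respecting the symplectic pairing, which is exactly where the $M_2(\lm,c)$ forms and the trivial/nontrivial dichotomy of $N_2(\om,b)$ arise, and where one must check that the reduction preserves $\nu_\om$ so as to remain in $\Om^0(M)$. Finally I would concatenate the block paths through $\dm$ and reparametrize on $[0,1]$ to assemble the single path $f$ with $f(0)=M$ and $f(1)=M_1(\om_1)\dm\cdots\dm M_k(\om_k)\dm M_0$.
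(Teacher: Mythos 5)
The first thing to note is that the paper does not prove this lemma at all: it is quoted verbatim from Long's book \cite{Lon4} (Theorem 1.9.3 on p.~46 and Corollary 2.3.8 on p.~58, with the proofs in \cite{Lon2} and \cite{Lon3}), so your attempt can only be measured against that literature proof. Your overall architecture does match it: the $J$-orthogonality of generalized eigenspaces for $\lm\mu\ne1$ and the grouping into reciprocal-conjugate orbits, the conversion of symplectic conjugacy into a path in $\Om^0(M)$ via connectedness of $\Sp(2n)$ (conjugation preserves $\sg(\cdot)\cap\U$ and all $\nu_\om$), and the treatment of the hyperbolic part are all sound. In particular your parity invariant is correct: for hyperbolic $M_0'$ each real pair $\lm,\lm^{-1}$ with $\lm<-1$ contributes $(\lm+1)(\lm^{-1}+1)=(1+\lm)^2/\lm<0$ to $\det(M_0'+I)$ while positive pairs and complex quadruples contribute positively, so ${\rm sign}\det(M_0'+I)=(-1)^{N_-}$ is the locally constant obstruction, and your merging move (eigenvalues $2e^{\pm\sqrt{-1}\th},\frac12 e^{\pm\sqrt{-1}\th}$ rotating from $\th=\pi$ to $\th=0$) is exactly how $D(-2)\dm D(-2)$ is connected to $D(2)\dm D(2)$ inside the hyperbolic set; this reproduces Corollary 2.3.8.

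The genuine gap is at the central step, and it is partly a circularity. You propose to "invoke the symplectic Jordan normal form (Theorem 1.9.3 of \cite{Lon4})" to write each block, \emph{up to symplectic conjugacy}, as a $\dm$-sum of basic normal forms. But Theorem 1.9.3 \emph{is} (essentially) the statement being proved, so it cannot be invoked; and the genuinely available input, the symplectic conjugacy classification, does \emph{not} yield $\dm$-sums of basic normal forms. An eigenvalue on $\U$ may carry Jordan chains of order $\ge3$ (or order-$2$ structures of indefinite Krein type such as $M_2(\pm1,c)$), and these conjugacy classes are not $\dm$-products of $D(\lm)$, $R(\th)$, $N_1(\lm,a)$, $N_2(\om,b)$. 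The real content of the lemma is the \emph{deformation} inside $\Om^0(M)$ that splits such higher normal forms: one perturbs so that the excess algebraic multiplicity leaves the unit circle in reciprocal pairs (joining $M_0$), while the eigenvalue set $\sg(\cdot)\cap\U$ and each nullity $\nu_\om$ stay fixed — this is legitimate precisely because $\Om(M)$ constrains only the $\U$-spectrum and the kernels, not algebraic multiplicities. That perturbation and ultimate-type analysis is the work of \cite{Lon2} and \cite{Lon3}, and your proposal explicitly defers it ("the hard part") without supplying the mechanism. As written, then, the proposal is an accurate roadmap whose decisive step is either circular or missing, so it does not constitute a proof.
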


This lemma is very important when we derive the basic normal form decomposition of $\ga_{\bb,e}(2\pi)$ to
compute the $\om$-index $i_{\om}(\ga_{\bb,e})$ of the path $\ga_{\bb,e}$ in this paper.

We define a special continuous symplectic path $\xi_n$ in $\Sp(2n)$ by
\be \xi_n(t) = \left(\begin{matrix}2-\frac{t}{\tau} & 0 \cr
                             0 &  (2-\frac{t}{\tau})^{-1}\end{matrix}
               \right)^{\dm n} \qquad {\rm for}\;0\le t\le \tau.  \lb{A2.3}\ee

\begin{definition} (\cite{Lon4})\lb{D2.2} {For any $\tau>0$ and $\ga\in \P_{\tau}(2n)$,
the $\om$-nullity $\nu_{\om}(\ga)$ of $\ga$ is defined by
\be \nu_{\om}(\ga)= \nu_{\om}(\ga(\tau)).  \lb{A2.4}\ee

If $\ga\in\P_{\tau,\om}^{\ast}(2n)$, the $\om$-index $i_{\om}(\ga)$ of $\ga$ is defined by
\be i_{\om}(\ga) = [\Sp(2n)_{\om}^0: \ga\ast\xi_n],  \lb{A2.5}\ee
where the right hand side of (\ref{A2.5}) is the usual homotopy intersection number, and
the orientation of $\ga\ast\xi_n$ is its positive time direction under homotopy with
fixed end points.

If $\ga\in\P_{\tau,\om}^0(2n)$, we let $\mathcal{F}(\ga)$ be the set of all open
neighborhoods of $\ga$ in $\P_{\tau}(2n)$, and the $\om$-index $i_{\om}(\ga)$ of $\ga$ is
defined by
\be i_{\om}(\ga) = \sup_{U\in\mathcal{F}(\ga)}\inf\{i_{\om}(\beta)\,|\,
                       \beta\in U\cap\P_{\tau,\om}^{\ast}(2n)\}.      \lb{A2.6}\ee
Then
$$ (i_{\om}(\ga), \nu_{\om}(\ga)) \in \Z\times \{0,1,\ldots,2n\}, $$
is called the index function of $\ga$ at $\om$. }
\end{definition}

The $\omega$-index function has the following properties:

\begin{lemma} (\cite{Lon4}, pp.147-148)
For $\omega\in\U$ and $\tau>0$, the $\omega$-index part of the index function defined on paths
in $\P_\tau(2n)$ is uniquely determined by the following five axioms:

$1^\circ$ ({\bf Homotopy invariant}) For $\gamma_0$ and $\gamma_1\in\P_\tau(2n)$,
if $\gamma_0\sim_\omega\gamma_1$ on $[0,\tau]$, then
$$ i_\omega(\gamma_0)=i_\omega(\gamma_1). $$

$2^\circ$ ({\bf Symplectic additivity}) For any $\gamma_j\in\P_\tau(2n_j)$ with $j=0,1$,
$$ i_\omega(\gamma_0\diamond\gamma_1) = i_\omega(\gamma_0)+i_\omega(\gamma_1). $$

$3^\circ$ ({\bf Clockwise continuity}) For any $\ga\in\P_\tau(2)$ satisfying $\ga(\tau)=N_1(\om,b)$
with $b=\pm 1$ or $0$ when $\omega=\pm 1$, or $\ga(\tau)=R(\vf)$ with $\om=e^{\sqrt{-1}\vf}\in \U\bs\R$,
there exists a $\th_0>0$ such that
$$ i_{\om}([\ga(\tau)\phi_{\tau,-\th}]*\ga) = i_{\om}(\ga),\quad\forall 0<\theta\le\th_0, $$
where $\phi_{\tau,\th}$ is defined by
$$ \phi_{\tau,\th}=R\left(\frac{t\th}{\tau}\right),\quad\forall t\in [0,\tau],\;\th\in\R. $$

$4^\circ$ ({\bf Counterclockwise jumping}) For any $\ga\in\P_\tau(2)$ satisfying $\ga(\tau)=N_1(\om,b)$
with $b=\pm 1$ or $0$ when $\om=\pm 1$, or $\ga(\tau)=R(\vf)$ with $\om=e^{\sqrt{-1}\vf}\in \U\bs\R$,
there exists a $\th_0>0$ such that
$$ i_{\om}([\ga(\tau)\phi_{\tau,\th}]*\ga) = i_{\om}(\ga)+1,\quad\forall 0<\th\le\th_0. $$

$5^\circ$ ({\bf Normality}) For $\hat{\aa}_0(t)=D\left(1+\frac{t}{\tau}\right)$ with $0\le t\le \tau$,
$$ i_{\om}(\hat{\aa}_0)=0. $$
\end{lemma}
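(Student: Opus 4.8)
The plan is to prove uniqueness by the standard axiomatic argument. Suppose two integer-valued functions $i_\om$ and $i_\om'$ on $\P_\tau(2n)$ both satisfy the five axioms $1^\circ$--$5^\circ$, set $d=i_\om-i_\om'$, and aim to show $d\equiv0$. By axiom $1^\circ$ the function $d$ is invariant under $\sim_\om$, so it descends to the set of $\om$-homotopy classes of paths; by axiom $2^\circ$ it is additive under the $\diamond$-sum, that is $d(\ga_0\diamond\ga_1)=d(\ga_0)+d(\ga_1)$. The argument then consists of reducing an arbitrary path to elementary generators on which $d$ vanishes directly.

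First I would reduce to two-dimensional pieces. Given $\ga\in\P_\tau(2n)$, the basic normal form decomposition lemma quoted above writes $\ga(\tau)\approx M_1(\om_1)\diamond\cdots\diamond M_k(\om_k)\diamond M_0$ as a $\diamond$-sum of basic normal forms, with $M_0$ a $\diamond$-sum of copies of $D(\pm2)$. Using axiom $1^\circ$ I would deform $\ga$ within its $\sim_\om$ class to a path $\ga_1\diamond\cdots\diamond\ga_k\diamond\ga_0$ whose factors end respectively at these normal forms; each factor path is assembled from the normality path $\hat\aa_0$ and the rotation paths $\phi_{\tau,\th}$, which is possible because $\Om_\om^0(\ga(\tau))$ is path connected. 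Axiom $2^\circ$ then gives $d(\ga)=\sum_{j=1}^{k}d(\ga_j)+d(\ga_0)$, so it suffices to prove $d=0$ on paths ending at a single basic normal form.

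Next I would evaluate $d$ on these generators. Consider first the nondegenerate case $\om\notin\sg(\ga(\tau))$, so that $\ga(\tau)\in\Sp(2n)_\om^*$. The normality axiom $5^\circ$ fixes $d(\hat\aa_0)=0$ on the path ending at $D(2)$, and any path ending in the same path-connected component of $\Sp(2n)_\om^*$ is $\om$-homotopic to a concatenation of $\hat\aa_0$ with rotation paths $\phi_{\tau,\pm\th}$. By clockwise continuity $3^\circ$ and counterclockwise jumping $4^\circ$, both $i_\om$ and $i_\om'$ change by identical amounts at each infinitesimal crossing of $\Sp(2n)_\om^0$, so $d$ is unchanged along the homotopy and equals its base value $0$. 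In the degenerate case $\ga(\tau)\in\Sp(2n)_\om^0$, clockwise continuity $3^\circ$ gives $i_\om(\ga)=i_\om([\ga(\tau)\phi_{\tau,-\th}]*\ga)$ and the same identity for $i_\om'$; for small $\th>0$ the right-hand path is nondegenerate, where $d$ has just been shown to vanish, so $d(\ga)=0$ as well.

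The main obstacle I anticipate is the reduction (reachability) step: one must verify that the operations permitted by the axioms---$\om$-homotopy ($1^\circ$), $\diamond$-decomposition ($2^\circ$), and endpoint rotation ($3^\circ$, $4^\circ$)---together transform any path into a $\diamond$-sum of standard pieces built from $\hat\aa_0$ and $\phi_{\tau,\th}$ by a finite sequence of moves. This amounts to controlling $\pi_0(\P_{\tau,\om}(2n))$ and checking that each path-connected component of $\Sp(2n)_\om^*$ can be joined to the component of $D(2)^{\diamond n}$ through a chain of rotation paths crossing $\Sp(2n)_\om^0$ transversally, with each crossing of a two-dimensional factor through $\pm1$ (when $\om=\pm1$) or through $\om\in\U\bs\R$ matched by the prescribed increment. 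Once reachability is established, axiom $1^\circ$ guarantees the reduction is independent of the chosen chain, the crossing increments coincide for $i_\om$ and $i_\om'$ by hypothesis, and the single base value $d(\hat\aa_0)=0$ forces $d\equiv0$.
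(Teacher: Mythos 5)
The paper offers no proof of this lemma at all: it is quoted verbatim from Long's book (\cite{Lon4}, pp.~147--148), so there is no in-paper argument to compare yours against. That said, your skeleton --- set $d=i_\om-i'_\om$, use axioms $1^\circ$ and $2^\circ$ to make $d$ an $\om$-homotopy-invariant, $\dm$-additive function, pin the base value with $5^\circ$, and propagate it across the singular hypersurface with $3^\circ$ and $4^\circ$ --- is indeed the standard uniqueness strategy and coincides in outline with the argument in Long's book.

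The problem is that what you label an ``anticipated obstacle'' (reachability) is the entire mathematical content of the theorem, and it is left open. Two concrete holes. (i) Axiom $1^\circ$ only permits homotopies whose endpoint stays in $\Om^0_\om(\ga(\tau))$, i.e.\ with constant $\om$-nullity, and the basic normal form decomposition you invoke can contain the $4\times4$ blocks $N_2(e^{\sqrt{-1}\th},b)$, which are \emph{not} $\dm$-sums of $2\times2$ blocks; when $\om=e^{\pm\sqrt{-1}\th}$ you must additionally show that such a degenerate endpoint can be joined, inside the stratum of fixed $\om$-nullity, to a $\dm$-decomposable matrix whose degenerate factor is one of the forms $N_1(\pm1,b)$ or $R(\vf)$ in $\Sp(2)$ actually covered by axioms $3^\circ$ and $4^\circ$ --- nothing in your text does this, and without it the reduction to two-dimensional pieces fails for exactly the degenerate endpoints where the axioms are most delicate. (ii) Even in the nondegenerate case, your claim that any path ending in a given component of $\Sp(2n)_\om^{\ast}$ is $\om$-homotopic to a concatenation of $\hat{\aa}_0$-type paths with rotation arcs requires the homotopy classification of symplectic paths: one must know that $\Sp(2n)_\om^{\ast}$ has exactly two path-connected components, that loops avoiding $\Sp(2n)_\om^0$ are contractible in $\Sp(2n)$ (so that the set of $\om$-homotopy classes with endpoint in a fixed component is a copy of $\Z$, detected by winding), and that the named concatenations realize every class. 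This is precisely the machinery Long develops before stating the axioms; invoking its conclusion as an available ``move'' without proof makes the argument circular. As written, yours is a correct proof plan whose decisive steps --- the connectivity analysis of the $\om$-nullity strata and the $\pi_0$ computation of the constrained path spaces --- are missing.
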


For paths in $\Sp(2)$, we have

\begin{lemma} (\cite{Lon4}, pp 179-183)
Let $\ga\in\mathcal{P}_\tau(2)$. Then one and only one of the following cases must happen.

$1^\circ$ If $\sg(\ga(\tau))=\{1,1\}$, then $\ga(\tau)\approx\left(\begin{matrix}1 & a\cr 0 & 1\end{matrix}\right)$
and we have
$$i_1(\ga)\in\left\{
\begin{array}{l}
    2\Z+1,{\rm\;if\;} a\ge0,\\
    2\Z,\quad\;\;{\rm\;if\;} a<0.
\end{array}\right.
$$

$2^\circ$ If $\sg(\ga(\tau))=\{-1,-1\}$, we have
$$ i_1(\ga)\in2\Z+1. $$

$3^\circ$ If $\sg(\ga(\tau))\cap\U=\emptyset$, we have
$$ i_1(\ga)\in 2\Z+\aa(\ga(\tau)), $$
where $\aa(\ga(\tau))$ is the hyperbolic index given by Definition 1.8.1 of \cite{Lon4},
i.e., $\aa(\ga(\tau))=0$ if $\ga(\tau)\approx D(\lm)$, $\aa(\ga(\tau))=1$ if $\ga(\tau)\approx D(-\lm)$
for $\lm\in(0,1)\cup (1,+\infty)$.

$4^\circ$ If $\sg(\ga(\tau))\in\U\bs\{1,-1\}$, we have
$$ i_1(\ga)\in 2\Z+1. $$
\end{lemma}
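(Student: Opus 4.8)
The plan is to prove this classification by combining the normal-form decomposition lemma with the five axioms that characterize $i_1$. By that decomposition, the endpoint $\ga(\tau)$ lies in the homotopy component $\Om^0(\ga(\tau))$ of exactly one basic normal form: $N_1(1,b)$ when $\sg(\ga(\tau))=\{1,1\}$, $N_1(-1,b)$ when $\sg(\ga(\tau))=\{-1,-1\}$, $D(\lm)$ or $D(-\lm)$ when $\sg(\ga(\tau))\cap\U=\emptyset$, and $R(\vf)$ when $\sg(\ga(\tau))\subset\U\bs\{1,-1\}$. These four alternatives are mutually exclusive and exhaustive, which already yields the ``one and only one'' dichotomy. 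By homotopy invariance (axiom $1^\circ$), $i_1(\ga)$ depends only on the $\sim_1$-class of $\ga$, so it suffices to evaluate $i_1$ on one conveniently chosen representative ending in each spectral stratum and to control how the value changes when the endpoint or the interior of the path is modified.

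The structural fact I would establish next is that the index is constant modulo $2$ on each path component of endpoints: if $\ga_0,\ga_1\in\P_\tau(2)$ have endpoints in the same component $\Om^0_1(\ga_0(\tau))$, then $i_1(\ga_0)-i_1(\ga_1)\in 2\Z$. I would prove this by forming the loop obtained from $\ga_0$ and the reverse of $\ga_1$ (deforming the common endpoint within $\Om^0_1$) and showing that every loop in $\Sp(2)$ has even $1$-index. Since $\Sp(2)$ retracts onto $\U$, the group $\pi_1(\Sp(2))\cong\Z$ is generated by the full rotation $t\mapsto R(2\pi t/\tau)$, and a standard application of clockwise continuity and counterclockwise jumping (axioms $3^\circ$ and $4^\circ$) to the endpoint sweeping once around $\U$ gives its $1$-index the even value $2$. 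This reduces the problem to computing one representative parity in each stratum together with identifying on which co-oriented side of $\Sp(2)^0_1$ the degenerate endpoints sit.

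For the hyperbolic stratum I would anchor everything with normality (axiom $5^\circ$): the standard path $\hat{\aa}_0(t)=D(1+t/\tau)$ has $i_1=0$ and endpoint $D(2)\approx D(\lm)$ with $\aa(D(\lm))=0$, giving $i_1\in 2\Z=2\Z+\aa(\ga(\tau))$ in the positive hyperbolic case. For the negative hyperbolic case I would take the representative $\ga(t)=R(\pi t/\tau)D(1+(\lm-1)t/\tau)$, which runs from $I$ to $R(\pi)D(\lm)=D(-\lm)$, and homotope it rel endpoints to the concatenation of $\hat{\aa}_0$ (index $0$) with the endpoint rotation $s\mapsto R(\pi s)D(\lm)$; since the trace $(\lm+\lm^{-1})\cos(\pi s)$ decreases through $2$ exactly once, the endpoint crosses $\Sp(2)^0_1$ once counterclockwise, so $i_1=0+1=1\in 2\Z+\aa(\ga(\tau))$ as $\aa(D(-\lm))=1$. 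For the elliptic stratum (case $4^\circ$) I would use the rotation path $R(\vf t/\tau)$: its endpoint leaves $I$ counterclockwise and crosses $\Sp(2)^0_1$ once, forcing $i_1\in 2\Z+1$, the extra crossing of $\Sp(2)^0_{-1}$ for $\vf\in(\pi,2\pi)$ being irrelevant to the $1$-parity. For case $2^\circ$ I would take the half-rotation $R(\pi t/\tau)$ ending at $-I=N_1(-1,0)$; here $1\notin\sg(-I)$, its $1$-index is the single counterclockwise departure, hence odd, and the $-1$-degeneracy of $N_1(-1,\pm1)$ does not change the $1$-parity, so $i_1\in 2\Z+1$.

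Finally, case $1^\circ$ is the delicate one: then $1\in\sg(\ga(\tau))$, so $\ga\in\P^0_{\tau,1}(2)$ and $i_1(\ga)$ is defined by the perturbation formula $i_1(\ga)=\sup_{U}\inf\{i_1(\beta)\}$. I would show that $N_1(1,1)$ and $N_1(1,-1)$ sit on the two opposite co-oriented sides of $\Sp(2)^0_1$: a counterclockwise perturbation of a path ending at $N_1(1,1)$ produces an elliptic $R(\ep)$ (the case $4^\circ$ value), while a clockwise perturbation of a path ending at $N_1(1,-1)$ produces a hyperbolic $D(\lm)$ (the case $3^\circ$ value). Combining this with axioms $3^\circ$ and $4^\circ$ yields $i_1(N_1(1,1))=i_1(N_1(1,-1))+1$, so the parities are $2\Z+1$ for $a\ge0$ and $2\Z$ for $a<0$, exactly as claimed. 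I expect this last step---matching the sign of $a$ in $N_1(1,a)$ with the co-orientation of $\Sp(2)^0_1$ and hence with the direction of the index jump---to be the main obstacle, since it is precisely here that the definitions of the basic normal form, its homotopy component, and the oriented crossing rules must be reconciled; the remaining strata reduce to counting oriented crossings once normality fixes the base value.
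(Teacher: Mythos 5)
Your architecture (normal-form decomposition of the endpoint, parity constancy via loops, normality as anchor, axioms $3^\circ$--$4^\circ$ to handle the degenerate stratum) is a reasonable reconstruction, but note first that the paper itself offers no proof of this lemma: it is quoted verbatim from \cite{Lon4}, pp.~179--183, where Long's argument runs instead through the fact that $\Sp(2)^{\ast}_1$ has exactly two path-connected components distinguished by the sign of $D_1(M)=\det(M-I_2)=2-{\rm tr}(M)$ (via the $\R^3$-cylindrical-coordinate picture of $\Sp(2)$), so that the parity of $i_1(\ga)$ is read off directly from ${\rm sign}\,(2-{\rm tr}\,\ga(\tau))$. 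Measured against the lemma itself, your sketch contains a genuine sign error at exactly the step you flag as the main obstacle, case $1^\circ$, and as written it would derive the opposite parity. Compute the rotational perturbation with the paper's $J=\left(\begin{smallmatrix}0&-1\\ 1&0\end{smallmatrix}\right)$: ${\rm tr}\bigl(N_1(1,1)e^{\th J}\bigr)=2\cos\th+\sin\th\approx 2+\th$, so a small \emph{counterclockwise} rotation of $N_1(1,1)$ is positive hyperbolic (trace $>2$), not elliptic; the elliptic matrices lie on the \emph{clockwise} side, since ${\rm tr}\bigl(N_1(1,1)e^{-\th J}\bigr)=2\cos\th-\sin\th\approx 2-\th<2$. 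For $N_1(1,-1)$ the sides are exchanged, ${\rm tr}\bigl(N_1(1,-1)e^{\pm\th J}\bigr)=2\cos\th\mp\sin\th$, so there clockwise is indeed hyperbolic (that half of your claim is correct). The correct deduction is then: by clockwise continuity (axiom $3^\circ$), a path ending at $N_1(1,1)$ has $i_1$ equal to the elliptic-side value, which is odd by your case $4^\circ$, while by counterclockwise jumping (axiom $4^\circ$) the hyperbolic side is that value plus $1$; for $N_1(1,-1)$, clockwise continuity gives the hyperbolic-side, hence even, value. With your stated directions (counterclockwise from $N_1(1,1)$ elliptic), axioms $3^\circ$--$4^\circ$ would force $i_1(\ga)=(\mbox{odd elliptic value})-1\in 2\Z$ for $a\ge 0$, i.e.\ the parities swapped; your proposal reaches the stated conclusion only by asserting it.

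Two further points of precision. First, both $N_1(1,1)$ and $N_1(1,-1)$ lie \emph{on} the hypersurface $\Sp(2)^0_1$ (each has eigenvalue $1$), so it is not meaningful to say they ``sit on the two opposite co-oriented sides''; what distinguishes them is the direction in which the flow $t\mapsto Me^{tJ}$ crosses $\Sp(2)^0_1$, encoded in $\frac{d}{dt}{\rm tr}\bigl(N_1(1,a)e^{tJ}\bigr)\big|_{t=0}=a$, and it is precisely this sign that matches $a$ with the parity. Second, two smaller gaps you should patch: in case $3^\circ$ your path to $D(-\lm)$ has ${\rm tr}=\bigl(\mu(t)+\mu(t)^{-1}\bigr)\cos(\pi t/\tau)=2+O(t^2)$ at $t=0$, a tangential touch of $\{{\rm tr}=2\}$ at the initial point (and for $\lm$ large the trace first rises above $2$ before descending), so ``crosses once'' needs the $\xi_1$-prefix in Definition \ref{D2.2} or a small perturbation to make the crossing count well defined; and your loop-parity lemma is applied with degenerate basepoints, where $i_1$ is only defined by the sup-inf of \eqref{A2.6}, so it should be stated and proved for nondegenerate endpoints and then transferred to case $1^\circ$ through axioms $3^\circ$--$4^\circ$ as above. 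Once these are repaired the remaining strata ($2^\circ$, $3^\circ$, $4^\circ$) are handled correctly.
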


\begin{definition} (\cite{Lon2}, \cite{Lon4})\lb{D2.3}
For any $M\in\Sp(2n)$ and $\om\in\U$, choosing $\tau>0$ and $\ga\in\P_\tau(2n)$ with $\ga(\tau)=M$,
we define
\be
S_M^{\pm}(\om)=\lim_{\epsilon\rightarrow0^+}\;i_{\exp(\pm\epsilon\sqrt{-1})\om}(\ga)-i_\om(\ga).
\ee
They are called the splitting numbers of $M$ at $\om$.
\end{definition}
The splitting numbers $S_M^{\pm}(\om)$ measures the jumps between $i_\om(\ga)$
and $i_\lambda(\ga)$ with $\lambda\in\U$ near $\om$ from two sides of $\om$ in $\U$.
For any $\om_0=e^{\sqrt{-1}\th_0}\in\U$ with $0\le\th_0<2\pi$,
we denote by $\om_j$ with $1\le j\le p_0$ the eigenvalues of $M$ on $\U$ which are
distributed counterclockwise from $1$ to $\om_0$ and located strictly between $1$ and $\om_0$.
Then we have
\be i_{\om_0}(\ga)=i_1(\ga)+S_M^+(1)+\sum_{j=1}^{p_0}(-S_M^-(\om_j)+S_M^+(\om_j))-S_M^-(\om_0).  \lb{5.A1}\ee

\begin{lemma} (\cite{Lon4}, p.198)
The integer valued splitting number pair $(S_M^+(\om),S_M^-(\om))$ defined for all
$(\om,M)\in\U\times\cup_{n\ge1}\Sp(2n)$ are uniquely determined by the following axioms:

$1^{\circ}$ (Homotopy invariant) $S_M^{\pm}(\om)=S_N^{\pm}(\om)$ for all $N\in\Omega^0(M)$.

$2^{\circ}$ (Symplectic additivity) $S_{M_1\diamond M_2}^{\pm}(\om)=S_{M_1}^{\pm}(\om)+S_{M_2}^{\pm}(\om)$
for all $M_i\in\Sp(2n_i)$ with $i=1$ and $2$.

$3^{\circ}$ (Vanishing) $S_M^{\pm}(\om)=0$ if $\om\not\in\sigma(M)$.

$4^{\circ}$ (Normality) $(S_M^+(\om),S_M^-(\om))$ coincides with the ultimate type of
$\om$ for $M$ when $M$ is any basic normal form.
\end{lemma}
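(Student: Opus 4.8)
The plan is to prove the characterization in two parts: that the splitting numbers of Definition \ref{D2.3} satisfy axioms $1^\circ$–$4^\circ$, and, the essential content, that any integer-valued pair satisfying these four axioms coincides with $(S_M^+(\om),S_M^-(\om))$ on all of $\U\times\cup_{n\ge1}\Sp(2n)$.

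For the first part I would check the four axioms directly from Definition \ref{D2.3}. Axiom $1^\circ$ follows from the homotopy invariance of the $\om$-index (the first property of the five-axiom $\om$-index lemma above): if $N\in\Om^0(M)\subseteq\Om^0_\om(M)$, then paths ending at $M$ and at $N$ are $\sim_\om$-homotopic, so their $i_\lambda$ agree for $\lambda\in\U$ near $\om$ and the one-sided limits defining $S^\pm$ coincide. Axiom $2^\circ$ follows from the symplectic additivity of $i_\om$, passed to the limit. Axiom $3^\circ$ holds because for $\om\notin\sigma(M)$ the index $i_\lambda(\ga)$ is constant in $\lambda\in\U$ near $\om$, so both one-sided limits equal $i_\om(\ga)$ and $S_M^\pm(\om)=0$. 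Axiom $4^\circ$ amounts to identifying the computed limits for basic normal forms with the ultimate type.

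For uniqueness, let $(T_M^+(\om),T_M^-(\om))$ be any integer-valued pair satisfying $1^\circ$–$4^\circ$, and fix $(\om,M)$. By the basic normal form decomposition (Lemma, \cite{Lon4}, p.58) there is a path $f:[0,1]\to\Om^0(M)$ with $f(0)=M$ and
\begin{equation}
f(1)=M_1(\om_1)\diamond\cdots\diamond M_k(\om_k)\diamond M_0,
\end{equation}
where each $M_i(\om_i)$ is a basic normal form of the eigenvalue $\om_i\in\U$ and $M_0$ is a hyperbolic block with $\sigma(M_0)\cap\U=\emptyset$. Since $f(1)\in\Om^0(M)$, axiom $1^\circ$ gives $T_M^\pm(\om)=T_{f(1)}^\pm(\om)$, and axiom $2^\circ$ then yields
\begin{equation}
T_{f(1)}^\pm(\om)=\sum_{i=1}^k T_{M_i(\om_i)}^\pm(\om)+T_{M_0}^\pm(\om).
\end{equation}
Here $T_{M_0}^\pm(\om)=0$ by axiom $3^\circ$ since $\om\in\U$ and $\sigma(M_0)\cap\U=\emptyset$; every block with $\om_i\ne\om$ contributes $0$ by axiom $3^\circ$; and every block with $\om_i=\om$ contributes exactly the ultimate type of $\om$ for the basic normal form $M_i(\om)$ by axiom $4^\circ$. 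The identical chain of equalities holds for the defined pair $S^\pm$, so $T_M^\pm(\om)=S_M^\pm(\om)$, which is the asserted uniqueness.

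The main obstacle is not the uniqueness reduction, which is a formal consequence of the normal form decomposition together with additivity and vanishing, but rather the verification that the pair of Definition \ref{D2.3} realizes axiom $4^\circ$—that its one-sided limits on the basic normal forms equal the ultimate type. This requires explicitly evaluating $\lim_{\ep\to0^+}i_{\exp(\pm\ep\sqrt{-1})\om}(\ga)-i_\om(\ga)$ for small perturbations of each of $D(\lm)$, $R(\th)$, $N_1(\pm1,a)$ and $N_2(e^{\sqrt{-1}\th},b)$, using the clockwise-continuity and counterclockwise-jumping properties of the $\om$-index recorded above. A secondary point is to confirm that a basic normal form $M_i(\om_i)$ has a nonzero splitting contribution only at $\om=\om_i$, which follows from its eigenvalue structure and guarantees that axioms $3^\circ$ and $4^\circ$ together determine every block.
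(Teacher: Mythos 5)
The paper itself offers no proof of this lemma --- it is quoted directly from Long's book (\cite{Lon4}, p.198) --- so the only benchmark is the textbook argument, and your overall architecture reproduces it faithfully: existence by verifying axioms $1^\circ$--$4^\circ$ for the pair of Definition \ref{D2.3}, uniqueness by pushing any axiomatic pair through the basic normal form decomposition using $1^\circ$ (to pass to $f(1)\in\Om^0(M)$), $2^\circ$ (to split over the $\dm$-sum), and then evaluating block by block. Your identification of axiom $4^\circ$ as the genuine computational content of the existence half is also accurate.

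However, one step of your uniqueness bookkeeping is wrong as stated. You claim that ``every block with $\om_i\ne\om$ contributes $0$ by axiom $3^\circ$,'' and in your closing sentence you assert that a basic normal form $M_i(\om_i)$ has a nonzero splitting contribution only at $\om=\om_i$. Both claims fail for the blocks with two distinct eigenvalues on $\U$: since $\sigma(R(\th))=\sigma(N_2(e^{\sqrt{-1}\th},b))=\{e^{\sqrt{-1}\th},e^{-\sqrt{-1}\th}\}$, at $\om=e^{-\sqrt{-1}\th}$ the vanishing axiom does not apply (there $\om\in\sigma(M_i)$ even though $\om\ne\om_i$), and the contribution is generally nonzero: combining $S_M^+(\om)=S_M^-(\ol{\om})$ with item $\langle 5\rangle$ of Lemma \ref{Lm:splitting.numbers}, the block $R(\th)$ carries $(S^+,S^-)=(1,0)$ at $e^{-\sqrt{-1}\th}$, and a non-trivial $N_2$ block carries $(1,1)$ at both eigenvalues. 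So your accounting, taken literally, would assign the wrong value to any $M$ possessing an $R(\th)$ or $N_2$ block when evaluated at the conjugate eigenvalue. The repair is immediate and is exactly what the formulation of axiom $4^\circ$ anticipates: the ultimate type of $\om$ for $M$ is defined for \emph{every} $\om\in\U$ (and equals $(0,0)$ when $\om\notin\sigma(M)$), so on each basic normal form block you should invoke $4^\circ$ at the given $\om$ whenever $\om\in\sigma(M_i(\om_i))$ --- including the case $\om=\ol{\om_i}$ --- and $3^\circ$ otherwise, with the hyperbolic remainder $M_0$ killed by $3^\circ$ as you say. With that correction the chain $T_M^\pm(\om)=T_{f(1)}^\pm(\om)=\sum_{i}T_{M_i(\om_i)}^\pm(\om)+T_{M_0}^\pm(\om)$ does pin down every value and the uniqueness argument closes; the verification of $4^\circ$ for the defined pair, via the clockwise-continuity and counterclockwise-jumping properties, is carried out in \cite{Lon4} and is indeed the part your sketch leaves open.
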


Moreover, by Lemma 9.1.6 on p.192 of \cite{Lon4} for $\om\in\C$ and $M\in\Sp(2n)$, we have
\be  S_M^+(\om)=S_M^-(\overline\om).  \ee

Note that here the ultimate type of $\om\in\U$ for a symplectic matrix $M$ mentioned in the above lemma
is given in Definition 1.8.12 on pp.41-42 of \cite{Lon4} algebraically with its more properties studied
there.

For the reader's conveniences, following List 9.1.12 on pp.198-199 of \cite{Lon4}, we list below
the splitting numbers (i.e., the ultimate types) for all basic normal forms.

\begin{lemma}\label{Lm:splitting.numbers}
We have:

$\langle$1$\rangle$ $(S_M^+(1),S_M^-(1))=(1,1)$ for $M=N_1(1,b)$ with $b=1$ or $0$.

$\langle$2$\rangle$ $(S_M^+(1),S_M^-(1))=(0,0)$ for $M=N_1(1,-1)$.

$\langle$3$\rangle$ $(S_M^+(-1),S_M^-(-1))=(1,1)$ for $M=N_1(-1,b)$ with $b=-1$ or $0$.

$\langle$4$\rangle$ $(S_M^+(-1),S_M^-(-1))=(0,0)$ for $M=N_1(-1,1)$.

$\langle$5$\rangle$ $(S_M^+(e^{\sqrt{-1}\th}),S_M^-(e^{\sqrt{-1}\th}))=(0,1)$
for $M=R(\theta)$ with $\th\in(0,\pi)\cup(\pi,2\pi)$.

$\langle$6$\rangle$ $(S_M^+(\om),S_M^-(\om))=(1,1)$ for $M=N_2(\om,b)$
being non-trivial (cf. Definition 1.8.11 on p.41 of \cite{Lon4}) with
$\om=e^{\sqrt{-1}\th}\in\U\backslash\R$.

$\langle$7$\rangle$ $(S_M^+(\om),S_M^-(\om))=(0,0)$ for $M=N_2(\om,b)$
being trivial (cf. Definition 1.8.11 on p.41 of \cite{Lon4}) with
$\om=e^{\sqrt{-1}\th}\in\U\backslash\R$.

$\langle$8$\rangle$ $(S_M^+(\om),S_M^-(\om))=(0,0)$ for $\om\in\U$ and $M\in\Sp(2n)$
satisfying $\sigma(M)\cap\U=\emptyset$.
\end{lemma}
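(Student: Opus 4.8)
I would prove Lemma~\ref{Lm:splitting.numbers} by evaluating the pair $(S_M^+(\om),S_M^-(\om))$ for each basic normal form directly from Definition~\ref{D2.3}, where $S_M^{\pm}(\om)$ is the jump of the $\om$-index $i_{\om}(\ga)$ as $\om$ is rotated off its value along $\U$. By the symplectic additivity axiom for splitting numbers (\cite{Lon4}, p.198) the computation is local to a single block, so I would treat the eight cases separately; since those axioms determine the splitting numbers uniquely, it suffices to exhibit, block by block, values consistent with them. Case $\langle 8\rangle$ is immediate: $\sigma(M)\cap\U=\emptyset$ forces $\om\notin\sigma(M)$ for every $\om\in\U$, so the vanishing axiom gives $(S_M^+(\om),S_M^-(\om))=(0,0)$.

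For the elliptic block $\langle 5\rangle$ I would use the rotation path $\ga(t)=R(t\th/\tau)$, which meets the hypersurface $\Sp(2)_{\om}^0$ transversally at the instant its spectrum reaches $\om=e^{\sqrt{-1}\th}$. Comparing the co-orientation $\frac{d}{ds}\ga(t_0)e^{sJ}|_{s=0}$ with $\dot{\ga}(t_0)$, and using that $J=R(\pi/2)$ commutes with every rotation, shows the crossing is positively oriented. Counting signed crossings then yields exactly one extra intersection on the clockwise side of $e^{\sqrt{-1}\th}$ and none on the counterclockwise side, so $i_{e^{-\sqrt{-1}\ep}\om}(\ga)=i_{\om}(\ga)+1$ and $i_{e^{\sqrt{-1}\ep}\om}(\ga)=i_{\om}(\ga)$ for small $\ep>0$, i.e. $(S_M^+,S_M^-)=(0,1)$. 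For the parabolic blocks $\langle 1\rangle$--$\langle 4\rangle$ the eigenvalue is real, so the symmetry $S_M^+(\om)=S_M^-(\overline{\om})$ forces $S_M^+=S_M^-$ at $\om=\pm1$ and only the common value remains. I would fix it by a coalescence argument: the $(1,1)$ class is the limit of a conjugate elliptic pair whose two $\U$-eigenvalues approach $\pm1$, each carrying a one-sided jump that combine to $(1,1)$, whereas the $(0,0)$ class is the limit of a real hyperbolic pair approaching $\pm1$ from off $\U$ (contributing $0$ by vanishing); continuity of $i_{\om}$ in $\om$ transfers these to the splitting numbers at $\pm1$. Which sign of $b$ realizes which is read from whether the block is reached from the elliptic or the hyperbolic region, and this role interchanges between $\om=1$ and $\om=-1$, matching $\langle 1\rangle$--$\langle 4\rangle$. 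The parity statement of the $\Sp(2)$ classification (\cite{Lon4}, pp.179--183), namely that $i_1(\ga)$ is odd exactly when the off-diagonal entry of $\ga(\tau)$ is $\ge 0$, provides an independent check.

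The remaining and most delicate cases are the indecomposable $4\times4$ blocks $N_2(e^{\sqrt{-1}\th},b)$ of $\langle 6\rangle$--$\langle 7\rangle$, where $\om$ is a double eigenvalue off $\R$. These do not split into $2\times2$ pieces, so the jump must be read from how the double conjugate pair moves under a symplectic perturbation, and this is governed by the Krein signature encoded in the triviality condition $(b_2-b_3)\sin\th\gtrless 0$ of Definition~1.8.11 in \cite{Lon4}. I would perturb $N_2(e^{\sqrt{-1}\th},b)$ inside $\Sp(4)$ to a nearby matrix with explicit $\U$-spectrum: in the trivial (Krein-definite) case the conjugate pairs remain on $\U$, so $i_{\om}$ is locally constant and $(S_M^+,S_M^-)=(0,0)$; in the non-trivial (Krein-indefinite) case the pair can leave $\U$, producing a one-step jump on each side and $(S_M^+,S_M^-)=(1,1)$. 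This is the step I expect to be the main obstacle, both because the block is irreducible and because one must verify that the algebraic sign condition separating ``trivial'' from ``non-trivial'' is precisely the one controlling whether the perturbed eigenvalues remain on or leave $\U$; keeping the co-orientation of $\Sp(4)_{\om}^0$ consistent with the $2\times2$ conventions is where the sign bookkeeping most easily goes wrong.

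As a final consistency check I would confirm that each pair obtained coincides with the ultimate type of $\om$ for the corresponding normal form, as required by the normality axiom of the splitting-number characterization (\cite{Lon4}, p.198), and that $S_M^+(\om)=S_M^-(\overline{\om})$ holds throughout.
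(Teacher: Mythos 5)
You have set out to prove from scratch a statement that the paper does not prove at all: the lemma is transcribed verbatim from List 9.1.12 on pp.~198--199 of \cite{Lon4}, where by the normality axiom the splitting numbers of a basic normal form coincide with its ultimate type, and those ultimate types are computed in Long's Chapter 9. Your overall plan (reduce to single blocks by symplectic additivity, dispose of $\langle 8\rangle$ by vanishing, treat $\langle 5\rangle$ by counting oriented crossings of the rotation path, use $S_M^+(\om)=S_M^-(\ol{\om})$ to get $S_M^+=S_M^-$ at $\om=\pm 1$) is sound in outline, and the case $\langle 5\rangle$ computation is essentially the standard one. But two of your key mechanisms have genuine gaps.

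First, the coalescence argument for $\langle 1\rangle$--$\langle 4\rangle$ does not work as stated. Splitting numbers are homotopy invariant only within $\Om^0(M)$, i.e.\ when the spectrum on $\U$ and the nullities are held fixed; a limit in which eigenvalues merge onto $\pm 1$ necessarily leaves $\Om^0$, and splitting numbers are not continuous under such limits. Indeed your own hyperbolic-limit reasoning, applied to $D(\lm)\to I_2$ as $\lm\to 1$, would assign $(0,0)$ to $I_2=N_1(1,0)$, contradicting $\langle 1\rangle$. Moreover the dichotomy ``reached from the elliptic region versus the hyperbolic region'' cannot distinguish $b=1$ from $b=-1$: perturbing the lower-left entry of $N_1(1,b)$ by $\ep$ gives eigenvalues $1\pm\sqrt{b\ep}$, so \emph{both} classes admit arbitrarily small elliptic and hyperbolic perturbations. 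What actually pins down the values is the direction of the index jump under the positive rotation $Me^{tJ}$, i.e.\ the clockwise-continuity and counterclockwise-jumping axioms applied to explicit paths ending at $N_1(\pm 1,b)$, which is how \cite{Lon4} proceeds. Second, in $\langle 6\rangle$--$\langle 7\rangle$ your identification of triviality with Krein definiteness is false: for a non-semisimple eigenvalue on $\U$ the eigenvector is isotropic for the Krein form, so the form on the generalized eigenspace of $N_2(\om,b)$ is indefinite in the trivial and non-trivial cases alike; the sign condition $(b_2-b_3)\sin\th\gtrless 0$ is not a definiteness condition. Nor would the stay-on-$\U$/leave-$\U$ dichotomy settle the splitting numbers by itself: simple eigenvalues that remain on $\U$ each carry a jump $(1,0)$ or $(0,1)$, so ``the pair remains on $\U$'' is compatible with a combined $(1,1)$ just as well as $(0,0)$, and the transfer from the perturbed matrix back to $N_2(\om,b)$ again exits $\Om^0$. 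Unless you are prepared to redo Long's ultimate-type computation in full, the honest course here is the paper's: cite List 9.1.12 of \cite{Lon4}.
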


\medskip

We refer to \cite{Lon4} for more details on this index theory of symplectic matrix paths
and periodic solutions of Hamiltonian system.

\medskip

For $T>0$, suppose $x$ is a critical point of the functional
$$ F(x)=\int_0^TL(t,x,\dot{x})dt,  \qquad \forall\,\, x\in W^{1,2}(\R/T\Z,\R^n), $$
where $L\in C^2((\R/T\Z)\times \R^{2n},\R)$ and satisfies the
Legendrian convexity condition $L_{p,p}(t,x,p)>0$. It is well known
that $x$ satisfies the corresponding Euler-Lagrangian
equation:
\bea
&& \frac{d}{dt}L_p(t,x,\dot{x})-L_x(t,x,\dot{x})=0,    \label{A2.7}\\
&& x(0)=x(T),  \qquad \dot{x}(0)=\dot{x}(T).    \label{A2.8}\eea

For such an extremal loop, define
\bea
P(t) &=& L_{p,p}(t,x(t),\dot{x}(t)),  \nn\\
Q(t) &=& L_{x,p}(t,x(t),\dot{x}(t)),  \nn\\
R(t) &=& L_{x,x}(t,x(t),\dot{x}(t)).  \nn\eea
Note that
\be F\,''(x)=-\frac{d}{dt}(P\frac{d}{dt}+Q)+Q^T\frac{d}{dt}+R. \lb{A2.9}\ee

For $\omega\in\U$, set
\be  D(\omega,T)=\{y\in W^{1,2}([0,T],\C^n)\,|\, y(T)=\omega y(0) \}.   \lb{A2.10}\ee
We define the $\omega$-Morse index $\phi_\omega(x)$ of $x$ to be the dimension of the
largest negative definite subspace of
$$ \langle F\,''(x)y_1,y_2 \rangle, \qquad \forall\;y_1,y_2\in D(\omega,T), $$
where $\langle\cdot,\cdot\rangle$ is the inner product in $L^2$. For $\omega\in\U$, we
also set
\be  \ol{D}(\omega,T)= \{y\in W^{2,2}([0,T],\C^n)\,|\, y(T)=\omega y(0), \dot{y}(T)=\om\dot{y}(0) \}.
                     \lb{A2.11}\ee
Then $F''(x)$ is a self-adjoint operator on $L^2([0,T],\R^n)$ with domain $\ol{D}(\omega,T)$.
We also define the $\om$-nullity $\nu_{\om}(x)$ of $x$ by
$$ \nu_{\om}(x)=\dim\ker(F''(x)).  $$
Note that we only use $n=2$ in (\ref{A2.11}) from Subsection 2.2 to Section 4.

In general, for a self-adjoint linear operator $A$ on the Hilbert space $\mathscr{H}$, we set
$\nu(A)=\dim\ker(A)$ and denote by $\phi(A)$ its Morse index which is the maximum dimension
of the negative definite subspace of the symmetric form $\langle A\cdot,\cdot\rangle$. Note
that the Morse index of $A$  is equal to the total multiplicity of the negative eigenvalues
of $A$.

On the other hand, $\td{x}(t)=(\partial L/\partial\dot{x}(t),x(t))^T$ is the solution of the
corresponding Hamiltonian system of (\ref{A2.7})-(\ref{A2.8}), and its fundamental solution
$\gamma(t)$ is given by
\bea \dot{\gamma}(t) &=& JB(t)\gamma(t),  \lb{A2.12}\\
     \gamma(0) &=& I_{2n},  \lb{A2.13}\eea
with
\be B(t)=\left(\begin{matrix}P^{-1}(t)& -P^{-1}(t)Q(t)\cr
                       -Q(t)^TP^{-1}(t)& Q(t)^TP^{-1}(t)Q(t)-R(t)\end{matrix}\right). \lb{A2.14}\ee

\begin{lemma} (\cite{Lon4}, p.172)\lb{Lm:A.12}  
For the $\omega$-Morse index $\phi_\omega(x)$ and nullity $\nu_\omega(x)$ of the solution $x=x(t)$
and the $\omega$-Maslov-type index $i_\omega(\gamma)$ and nullity $\nu_\omega(\gamma)$ of the
symplectic path $\ga$ corresponding to $\td{x}$, for any $\omega\in\U$ we have
\be \phi_\omega(x) = i_\omega(\gamma), \qquad \nu_\omega(x) = \nu_\omega(\gamma).  \lb{A2.15}\ee
\end{lemma}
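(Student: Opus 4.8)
The plan is to treat the nullity and the index separately: the nullity identity is obtained by a direct kernel correspondence, and it then anchors a homotopy argument for the index identity. This is the scheme carried out in Chapter 7 of \cite{Lon4}, and I would follow it.

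First I would prove $\nu_{\om}(x)=\nu_{\om}(\ga)$. A loop $y\in\ol{D}(\om,T)$ lies in $\ker F''(x)$ precisely when it solves the Jacobi equation $-\frac{d}{dt}(P\dot{y}+Qy)+Q^{T}\dot{y}+Ry=0$. Setting the momentum $u(t)=P(t)\dot{y}(t)+Q(t)y(t)$ and $\xi=(u,y)^{T}$, this equation is exactly the linear Hamiltonian system $\dot{\xi}=JB(t)\xi$ with $B(t)$ as in \eqref{A2.14}. Since $P,Q,R$ are $T$-periodic (they are evaluated along the $T$-periodic loop $x$), the boundary conditions $y(T)=\om y(0)$ and $\dot{y}(T)=\om\dot{y}(0)$ force $u(T)=\om u(0)$, hence $\xi(T)=\om\xi(0)$, i.e. $\ga(T)\xi(0)=\om\xi(0)$; conversely every $\om$-eigenvector of $\ga(T)$ integrates back to such a $y$. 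After complexifying when $\om\notin\{1,-1\}$, this gives a linear isomorphism $\ker F''(x)\cong\ker_{\C}(\ga(T)-\om I)$, so $\nu_{\om}(x)=\nu_{\om}(\ga)$.

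For the index identity I would use homotopy invariance. The Legendre condition $P=L_{pp}>0$ makes $F''(x)$ self-adjoint and bounded below on $L^{2}$ with domain $\ol{D}(\om,T)$, and a relatively compact perturbation of the non-negative operator $-\frac{d}{dt}(P\frac{d}{dt})$; hence its spectrum is discrete and $\phi_{\om}(x)$ is finite. I would join the coefficient triple $(P,Q,R)$ to a reference triple $(I_{n},0,R_{0})$ through a path $(P_{s},Q_{s},R_{s})$, $s\in[0,1]$, of Legendre-convex data, producing a family of operators $F''_{s}$ together with the associated symplectic paths $\ga_{s}$ whose generators $B_{s}$ are given by \eqref{A2.14}. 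By homotopy invariance of the Maslov-type index and by perturbation theory for the eigenvalues of $F''_{s}$, both $s\mapsto\phi_{\om}(F''_{s})$ and $s\mapsto i_{\om}(\ga_{s})$ are locally constant on the set where $\nu_{\om}(F''_{s})=0$, and can jump only at the finitely many parameters where the kernel becomes nontrivial (which, after a generic perturbation of the homotopy, may be assumed regular).

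The crux, and the step I expect to be the main obstacle, is to match the two jumps at each regular crossing $s_{0}$. The jump of the Morse index $\phi_{\om}$ is, up to sign, the signature of the operator crossing form $y\mapsto\langle(\partial_{s}F''_{s_{0}})y,y\rangle$ on $\ker F''_{s_{0}}$ (first-order perturbation theory: the sign of $\partial_{s}$ of each eigenvalue passing through $0$), while the jump of $i_{\om}(\ga_{s})$ is the signature of the Maslov-type crossing form measuring how $\ga_{s}(T)$ crosses the hypersurface $\Sp(2n)^{0}_{\om}$ on $\ker_{\C}(\ga_{s_{0}}(T)-\om I)$. The delicate point is the sign bookkeeping: one must check that, under the kernel isomorphism from the nullity step, these two crossing forms have equal signatures, so that the jumps genuinely coincide; the conventions fixing $i_{\om}$ and $\phi_{\om}$ in \cite{Lon4} are arranged precisely so that this holds. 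Granting this, $\phi_{\om}(F''_{s})-i_{\om}(\ga_{s})$ is constant in $s$, and it remains to evaluate it at the reference endpoint $(I_{n},0,R_{0})$, where both sides are computed directly by Fourier analysis---the $\om$-eigenfunctions being $e^{\sqrt{-1}(2\pi m+\th)t/T}v$ for $\om=e^{\sqrt{-1}\th}$---and seen to agree, giving difference $0$ and completing the proof.
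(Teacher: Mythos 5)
The paper does not actually prove this lemma: it is quoted from p.~172 of Long's book \cite{Lon4} and used as a black box, so there is no in-paper argument to compare yours against. Measured against the standard proof in the cited reference, your outline follows the right route. The nullity half is complete and correct: the Legendre transform $u=P\dot y+Qy$ converts the Jacobi equation on $\ol{D}(\om,T)$ into the linear Hamiltonian system with coefficient matrix \eqref{A2.14}, and the $T$-periodicity of $P,Q$ (valid because $x$ is a $T$-periodic extremal of a $T$-periodic Lagrangian) turns the $\om$-boundary conditions on $(y,\dot y)$ into $\xi(T)=\om\xi(0)$, giving the kernel isomorphism and hence $\nu_\om(x)=\nu_\om(\ga)$.

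The index half is the correct scheme but, as you yourself flag, the decisive step is only asserted. Writing that ``the conventions in \cite{Lon4} are arranged precisely so that the two crossing forms have equal signatures'' is circular as a proof: that equality of signatures, transported through the kernel isomorphism of the first half, \emph{is} the analytic content of the lemma, and verifying it requires an actual computation relating $\partial_s F''_s$ on $\ker F''_{s_0}$ to the way $\ga_s(T)$ crosses $\Sp(2n)^0_\om$ with the co-orientation fixed by $Me^{tJ}$. Two further points need care if the outline is to be completed: at a degenerate parameter the index is defined by the infimum over perturbations \eqref{A2.6}, so the jump conventions must be checked to be consistent on both sides rather than dismissed by ``assume regular crossings after a generic perturbation''; and the anchor computation and the crossing analysis should treat $\om=\pm1$ and $\om\in\U\setminus\{\pm1\}$ separately, since the quadratic form is real in the first case and sesquilinear on a complexified domain in the second. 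None of this is fatal --- it is exactly what the cited proof supplies --- but as written your argument fully establishes the nullity identity and only a framework, not a proof, of the index identity.
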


For more information on these topics, we refer to \cite{Lon4}.

\end{document}